\newcommand{\B}[2]{\scaleobj{#2}{#1}}
\subjclass{Primary: 57R65, 57R67, 57P10. Secondary: 57N65}
\dedicatory{In memory of Bill Browder}
\newtheorem{thm}{Theorem}[section]  
\newtheorem*{un-no-thm}{Theorem}
\newtheorem{cor}[thm]{Corollary}     
\newtheorem{lem}[thm]{Lemma}         
\newtheorem{prop}[thm]{Proposition}
\newtheorem{bigthm}{Theorem}
\newtheorem{bigadd}[bigthm]{Addendum}
\theoremstyle{definition}
\newtheorem{defn}[thm]{Definition}   
\theoremstyle{definition}
\theoremstyle{definition}
\theoremstyle{remark}
\newtheorem{rem}[thm]{Remark}        
\newtheorem*{acks}{Acknowledgements}
\newtheorem*{out}{Outline}
\newtheorem{rems}[thm]{Remarks}
\newtheorem{ex}[thm]{Example}
\newtheorem*{notation}{Notation}
\DeclareMathOperator{\Top}{Top}
\DeclareMathOperator{\Sp}{Spectra}
\begin{document}
\title{On Poincar\'e surgery}
\date{\today}
\author{John R.\ Klein}
\address{Wayne State University, Detroit, MI 48202}
\email{klein@math.wayne.edu}
\begin{abstract} We exhibit a homotopy theoretic proof of the Fundamental Theorem of Poincar\'e surgery
in the simply connected case. We also deduce the
Poincar\'e transversality exact sequence.
\end{abstract}
\maketitle
\setlength{\parindent}{15pt}
\setlength{\parskip}{1pt plus 0pt minus 1pt}

\def\Sp{\text{\bf Sp}}
\def\vo{\varOmega}
\def\vs{\varSigma}
\def\smsh{\wedge}
\def\flush{\flushpar}
\def\id{\text{id}}
\def\dbslash{/\!\! /}
\def\codim{\text{\rm codim\,}}
\def\:{\colon}
\def\holim{\text{holim\,}}
\def\hocolim{\text{hocolim\,}}
\def\Bbb{\mathbb}
\def\bold{\mathbf}
\def\Aut{\text{\rm Aut}}
\def\cal{\mathcal}
\def\sec{\text{\rm sec}}
\def\gda{G\text{\rm -}\delta\text{\rm -}\alpha}
\def\PDD{\text{\rm pd\,}}
\def\PD{\text{\rm P}}
\def\stableto {\,\, \mapstochar \!\!\to}

\setcounter{tocdepth}{1}
\tableofcontents
\addcontentsline{file}{sec_unit}{entry}

\section{Introduction}

Poincar\'e surgery was a development of early 1970s  that attempted to transport the main results
of the surgery classification of manifolds into the category of Poincar\'e duality spaces.
The program was motivated by the observation that surgery on a framed embedded sphere
in a smooth manifold has a Poincar\'e duality space correlate, and the passage from geometry to algebra
passes through Poincar\'e duality spaces \cite[\S6]{Klein_survey}.

The  main assertions of Poincar\'e surgery highlight the extent to which manifold classification problems are reducible to homotopy theory.
By the Spring of 1969, Bill Browder had outlined the program. He remarked that a ``homotopy theoretical
problem should have a homotopy theoretical solution'' \cite[p.~1]{Quinn_mimeo}. 

 The goal of this paper is to provide a homotopy theoretic, manifold theory free, route to some of the main results
 of Poincar\'e surgery, primarily in the simply connected case. 

\subsection{The Fundamental Theorem} A basic notion in manifold surgery is that of a {\it normal map}. Let
$Q$ be a connected Poincar\'e duality space of dimension $d$  and let $\xi$ be stable vector bundle that
lifts the Spivak normal fibration \cite{Spivak}.
A normal map consists of a degree one map $f\: P \to Q$ 
from a smooth $d$-manifold together with a stable bundle isomorphism $\nu_P \cong f^\ast \xi$, where $\nu_P$ is the stable normal
bundle of $P$.  When $d\ge 5$, surgery theory provides an obstruction $\sigma(f) \in L_d(\pi,w) $ to deciding when $f$ is cobordant to 
a homotopy equivalence, where $ L_\ast(\pi,w)$ are the quadratic $L$-groups with 
$\pi = \pi_1(Q)$ and $w= w_1(\xi)$ the first Stiefel-Whitney class.

There is a Poincar\'e duality space variant of the above called  a {\it Poincar\'e normal map}.  In this instance, the data
are 
a degree one map of Poincar\'e duality spaces $f\: P\to Q$ and a stable fiber homotopy equivalence $\xi_P \simeq f^\ast \xi$, where $\xi_P$ is the 
Spivak normal fibration of $P$ and $\xi = \xi_Q$ is the Spivak normal fibration of $Q$.

Let 
\[
\cal N(Q,\xi) 
\]
The set of cobordism classes of Poincar\'e normal maps with target $Q$.
The surgery obstruction in this setting is well-defined and is a function
\[
\sigma\: \cal N(Q,\xi) \to L_d(\pi,w) \, .
\]
If $f$ is normally cobordant to a homotopy equivalence, then $\sigma(f) = 0$.
We will prove the following partial converse in the simply connected case:


 
 \begin{bigthm}[``Fundamental Theorem''] \label{bigthm:fund-thm} 
Assume $d\ge 7$ or $d=5$. Suppose that $\pi$ is trivial. If $\sigma(f) = 0$, then $f$ is normally cobordant to a  homotopy equivalence.
  \end{bigthm}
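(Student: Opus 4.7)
The plan is to follow the classical Browder-Novikov-Sullivan-Wall scheme, substituting homotopy theoretic surgery for handle attachment at every stage. I would first perform surgery below the middle dimension to make $f$ highly connected, and then invoke the hypothesis $\sigma(f) = 0$ to eliminate the remaining middle-dimensional obstruction.

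The key preliminary is an operation of \emph{Poincar\'e surgery} on a Poincar\'e normal map. Given a class $\alpha \in \pi_{k+1}(f)$ with $k+1 \leq \lfloor d/2 \rfloor$, represent $\alpha$ by a map $S^k \to P$ together with a null-homotopy of its composition with $f$. Below the middle dimension a Poincar\'e embedding theorem should promote this datum to a Poincar\'e embedding of $S^k$ in $P$ whose normal fiber data is compatible with $\xi_P$. Excising the Poincar\'e tubular neighborhood and gluing in the Poincar\'e handle $D^{k+1} \times S^{d-k-1}$ as a homotopy pushout produces a new Poincar\'e duality space $P'$ together with a normal cobordism over $Q$ that kills $\alpha$. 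Iterating, one reduces to the case where $f$ is $\lfloor d/2 \rfloor$-connected.

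At this stage the middle-dimensional kernel of $f$ carries Wall's nonsingular quadratic form (for $d$ even) or a formation (for $d$ odd), whose Witt class is $\sigma(f)$. Since $\sigma(f) = 0$, stabilization by trivial Poincar\'e surgeries (handles attached along null-homotopic spheres) makes the form hyperbolic on the nose. A Lagrangian can then be realized by disjoint Poincar\'e embedded spheres, and surgery on this Lagrangian converts $f$ into a homotopy equivalence. The dimension restrictions $d \geq 7$ or $d = 5$ are precisely what is needed for the Poincar\'e embedding machinery to produce normal data at and below the middle dimension, and for general-position type arguments to disjoin the middle-dimensional Lagrangian spheres; the gap at $d = 6$ corresponds exactly to embedding $S^2$ in codimension three at the edge of the metastable range.

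The main obstacle is the Poincar\'e embedding step itself, together with the verification that the result of Poincar\'e surgery retains Poincar\'e duality with Spivak fibration matching $\xi$. In the smooth category these are free gifts of transversality and handle attachment; in the Poincar\'e category they must be constructed from obstruction theory for the appropriate fibrations of normal structures together with a gluing lemma expressing the surgered space as a homotopy pushout whose fundamental class is assembled from those of the pieces. Once this infrastructure is in place, the residual surgery algebra is formally identical to Wall's in the simply connected case.
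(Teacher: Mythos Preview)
Your overall architecture---surgery below the middle dimension via Poincar\'e embeddings, then kill the middle-dimensional kernel using $\sigma(f)=0$---matches the paper's. But two steps in your middle-dimensional endgame are genuine gaps.

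First, the sentence ``A Lagrangian can then be realized by disjoint Poincar\'e embedded spheres, and surgery on this Lagrangian converts $f$ into a homotopy equivalence'' hides exactly the part that is \emph{not} available in the Poincar\'e category. Disjoining middle-dimensional spheres in the smooth case is the Whitney trick; there is no Poincar\'e Whitney trick on offer here, and the paper does not attempt one. Instead the paper (following Browder) works \emph{one sphere at a time}: for $d=2k$ even, one finds a single indivisible $x\in K_k(P)$ with $\mu(x)=0$ (using the index for $k$ even, a symplectic basis for $k$ odd), represents it by a framed Poincar\'e embedding $S^k\times D^k\to P$ via the author's embedding theorems, does surgery, and checks the rank of $K_k$ drops. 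Iterate. No simultaneous embedding of a Lagrangian is ever needed.

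Second, for $d$ odd your appeal to ``a formation'' is too coarse. The actual difficulty is torsion in $K_k(P)$, and the paper handles it by Browder's scheme: kill the free part, then for the torsion use the family of framings parametrized by $\pi_{k+1}(G/G_{k+1})$ (Lemmas on $G/G_{\ell+1}$) to choose a surgery that strictly reduces $|T|$ or $\dim K_k(P;\Bbb Z_q)$. Your sketch does not engage with this, and ``formation $=0$ in $L_{2k+1}$'' does not by itself hand you the embedded framed sphere on which the right surgery decreases complexity.

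A minor correction: the exclusion of $d=6$ is about Poincar\'e embedding the \emph{middle} sphere $S^3$ in a $6$-dimensional Poincar\'e space (the hypothesis $d\ge\max(7,2k)$ in the even-dimensional embedding corollary fails at $k=3$), not about $S^2$ in codimension three.
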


\noindent (Compare \cite[cor.~1.4]{Quinn}, \cite[thm.~3.4, cor.~6.2]{Jones}, \cite[thm.~4.1]{Hodgson_surgery},
 \cite[thm.~5.1]{HV}.) 
 We also prove the following companion result, which does not require the triviality of $\pi$.

\begin{bigthm}[``Wall Realization''] \label{bigthm:sigma-onto} If $ d \ge 8$ is even, then $\sigma$ is  onto.
\end{bigthm}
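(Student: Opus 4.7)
Write $d = 2k$ with $k \ge 4$, and let $\sigma \in L_{2k}(\pi,w)$ be given. By the algebraic description of the even-dimensional $L$-groups, $\sigma$ is represented by a nonsingular $(-1)^k$-quadratic form $(K,\lambda,\mu)$ on a based free $\mathbb{Z}[\pi]$-module $K$ of some finite rank $n$. The plan is to realize this form geometrically as the middle-dimensional surgery kernel of a Poincar\'e normal map $f \colon P \to Q$ which is Poincar\'e normally cobordant to $\mathrm{id}_Q$; the cobordism then witnesses $\sigma(f) = \sigma$.

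The construction is a Poincar\'e analogue of classical Milnor plumbing. For each basis element $e_i$ of $K$, choose a based map $\alpha_i \colon S^k \to Q$ whose class in $\pi_k(\tilde Q)$, together with its $\pi$-action, encodes $e_i$. Using Poincar\'e embedding theory, Poincar\'e-thicken the wedge $\alpha_1 \vee \cdots \vee \alpha_n$ inside a Poincar\'e collar of $Q \times \{1\}$ in $Q \times I$, with the stable fiber homotopy type of each thickening of $\alpha_i$ chosen to encode the self-intersection $\mu(e_i)$. Plumb neighboring thickenings along pieces modeled on the standard Poincar\'e pair $(D^k \times D^k, \partial(D^k \times D^k))$ in the pattern prescribed by the intersection data $\lambda(e_i,e_j)$. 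This produces a Poincar\'e pair $(W; Q, P)$ extending $Q \times I$. The collapse map $W \to Q$ is degree one, the stable Spivak fibration of $W$ pulls back from $\xi$ at every stage of the construction, and the restriction $f \colon P \to Q$ is the desired Poincar\'e normal map.

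The verification that $\sigma(f) = \sigma$ is then essentially tautological: by general position $f$ is $k$-connected, the middle-dimensional kernel $K_k(\tilde P)$ contributed by the plumbing is $K$, and the intersection and self-intersection pairings are $\lambda$ and $\mu$ respectively. The main obstacle is the geometric construction itself, where the argument genuinely leaves the manifold category. One must produce Poincar\'e thickenings of $S^k$ over $Q$ whose stable fiber homotopy classes realize every prescribed value of the quadratic refinement (including when $\pi$ is nontrivial, where $\mu$ takes values in $\mathbb{Z}[\pi]$ modulo a twisted involution), and one must plumb them so as to preserve Poincar\'e duality and compatibility with the Spivak data of $Q$. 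This reduces to an analysis of the classifying space for Poincar\'e $2k$-thickenings of $S^k$ and of their standard pairwise gluing; the hypothesis $d \ge 8$, equivalently $k \ge 4$, guarantees that we remain in the stable range where this classification is accessible and where handle-like gluings behave as expected.
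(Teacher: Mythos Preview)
Your plan contains a genuine contradiction. You propose to build a Poincar\'e normal cobordism $(W;Q,P) \to (Q\times I;Q,Q)$ from $\mathrm{id}_Q$ to $f$ and then assert $\sigma(f)=\sigma$. But $\sigma$ is a normal cobordism invariant: if $f$ is Poincar\'e normally cobordant to $\mathrm{id}_Q$, then $\sigma(f)=\sigma(\mathrm{id}_Q)=0$. So no such $W$ can ``witness'' a nonzero obstruction. Relatedly, your $W$ is $(2k{+}1)$-dimensional, so any rel-boundary surgery obstruction of $W\to Q\times I$ would live in $L_{2k+1}$, not $L_{2k}$. The dimensional mismatch also shows up in your plumbing step: you are working inside $Q\times I$ (dimension $2k{+}1$) yet gluing along pieces modeled on $(D^k\times D^k,\partial(D^k\times D^k))$, which is $2k$-dimensional.

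The paper avoids this by first producing a codimension-one splitting of $Q$. One Poincar\'e embeds a $2$-skeleton $K\hookrightarrow Q$ to obtain $Q\simeq \bar K\cup_{\partial\bar K}Q_0$, where $N:=\partial\bar K$ is a $(2k{-}1)$-dimensional Poincar\'e space. Wall realization is then carried out on $N$: given $\tau\in L_{2k}(\pi,w)$ one constructs a $2k$-dimensional Poincar\'e cobordism $(W;\partial_0W,\partial_1W)$ with $\partial_0W=N$, a normal map $F\colon W\to N\times I$ that is the identity on $\partial_0W$ and a homotopy equivalence on $\partial_1W$, and with rel-boundary obstruction $\tau$. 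Setting $P=\bar K\cup_{\partial_0W}W\cup_{\partial_1W}Q_0$ and $f=\mathrm{id}_{\bar K}\cup F\cup\mathrm{id}_{Q_0}$ gives $\sigma(f)=\tau$. Crucially, this $f$ is \emph{not} normally cobordant to $\mathrm{id}_Q$. The Poincar\'e Wall realization on $N$ is itself nontrivial: one trivially Poincar\'e embeds $r$ copies of $S^{k-1}\times D^k$ in $N$ and then modifies the embedding so that its stable Hopf invariant realizes the prescribed $(-1)^k$-Hermitian form, using an embedding--immersion square and the identification of $\{\Sigma N^+,D_2(P/\partial P)\}$ with such forms; finally one attaches $k$-handles along the modified embedding.
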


\subsection{Transversality}
Let $X$ be a space of finite type equipped with a stable vector bundle $\xi$ of virtual dimension zero.
Let $\Omega_d^{\text{diff}}(X,\xi) $ denote the bordism group 
 generated by  maps $f\: P \to X$ in which $P$ is a closed smooth manifold of dimension $d$, and
where the pullback of $f^\ast\xi$ is identified with the stable normal bundle of $P$. The Pontryagin-Thom construction defines a homomorphism
\begin{equation} \label{eqn:smooth-bordism-to-homotopy}
\Omega_d^{\text{diff}}(X,\xi) \to \pi_{d}(X^\xi)
\end{equation}
where the target denotes the homotopy group in degree $d$ of the Thom spectrum $X^\xi$. 
By smooth manifold transversality, \eqref{eqn:smooth-bordism-to-homotopy} is an isomorphism.

Now suppose instead that $\xi$ is a stable spherical fibration over $X$.  
Then similarly, one has a Pontryagin-Thom homomorphism
\begin{equation}\label{eqn:bordism-to-homotopy}
c\: \Omega_d^{P}(X,\xi) \to \pi_{d}(X^\xi)
\end{equation}
whose source is the bordism group  generated by  $f\: P \to X$, in which $P$ is
a Poincar\'e duality space, and $f^\ast\xi$ is identified with the Spivak normal fibration of $P$.

\begin{ex} Let $X =\ast$ be a point.
A construction of Milnor  yields a non-smoothable, stably parallelizable piecewise linear manifold $P$ of dimension $4k$
with signature $8$  \cite{Milnor-diff-top}. In particular, the Spivak normal fibration of $P$ is trivializable.
The signature defines a homomorphism $\Omega_{4k}^{P}(\ast,\xi) \to \Bbb Z$, and it follows that
the bordism class of $P$ is a non-torsion element. However,
 $\pi_{4k}(\ast^\xi) = \pi_{4k}(S^0)$ is finite \cite{Serre}.
 It follows that the kernel of  \eqref{eqn:bordism-to-homotopy} has elements of infinite order in this case.
\end{ex}

Under certain restrictions, 
 we will see that the failure of \eqref{eqn:bordism-to-homotopy} to be an isomorphism is detected
by the surgery obstruction groups.  

\begin{bigthm}
\label{bigthm:stronger-exactness} 
There is a long exact sequence
of abelian groups
\[
\cdots \to \cal F_d(X,\xi)  \to \Omega_d^{P}(X,\xi) @> c >> \pi_{d}(X^\xi) \to  \cal F_{d-1}(X,\xi) \to \cdots \, .
\]
and a natural homomorphism 
\[
\sigma'\: \cal F_d(X,\xi) \to L_d(\pi,w)\, .
\]
If $d \ge 7$ and $\pi$ is trivial, then $\sigma'$ is an isomorphism.
\end{bigthm}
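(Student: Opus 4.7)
The plan is to realize $c$ as a map of spectra and define $\mathcal{F}(X,\xi)$ to be its homotopy fiber; the stated long exact sequence is then the homotopy long exact sequence of the fibration of spectra $\mathcal{F}(X,\xi) \to \mathbf{M}^P(X,\xi) \to X^\xi$, where $\mathbf{M}^P(X,\xi)$ is a Poincar\'e bordism spectrum with $\pi_d = \Omega_d^P(X,\xi)$. An element of $\mathcal{F}_d(X,\xi)$ is thereby represented by a pair $(f,h)$ consisting of a Poincar\'e normal map $f\colon P \to X$ together with a nullhomotopy $h$ of the Pontryagin-Thom class $c(f) \in \pi_d(X^\xi)$.

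To construct $\sigma'$, I would apply smooth transversality to $h$ to obtain a smooth compact $(d{+}1)$-manifold $W$ with boundary a closed $d$-manifold $M$, together with a normal map $(W,M)\to X$ whose restriction $M\to X$ is a smooth representative of $c(f)$. The Browder-Novikov construction produces from the data $(f,h,W,M)$ a classical Poincar\'e normal map $g\colon M\to P$ (smooth source, PD target, in the sense of Theorem~A), and I set $\sigma'(f,h):=\sigma(g) \in L_d(\pi,w)$. Cobordism invariance of $\sigma$ combined with a relative version of this transversality construction ensures that $\sigma'$ is a well-defined homomorphism on $\mathcal{F}_d(X,\xi)$.

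Assume $\pi$ is trivial and $d\ge 7$. Injectivity of $\sigma'$ follows from the Fundamental Theorem: if $\sigma(g)=0$ then $g$ is normally cobordant to a homotopy equivalence $M'\xrightarrow{\simeq} P$, and pasting this cobordism to $W$ produces a Poincar\'e nullbordism of the pair $(f,h)$ in $\mathcal{F}(X,\xi)$. For surjectivity, odd $d$ is automatic because $L_d(1)=0$; for even $d\ge 8$, Wall Realization (Theorem~B) applied to $Q=S^d$ produces, for a prescribed $\theta\in L_d(1)$, a Poincar\'e normal map $g_0\colon P_0\to S^d$ with $\sigma(g_0)=\theta$. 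Postcomposing with the constant map $S^d\to X$ at a basepoint yields a Poincar\'e normal map $f_0\colon P_0\to X$ whose Pontryagin-Thom class factors through the null map $S^d\to S^0$, supplying a canonical nullhomotopy $h_0$; the pair $(f_0,h_0)\in\mathcal{F}_d(X,\xi)$ then satisfies $\sigma'(f_0,h_0)=\theta$ by construction.

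The main technical obstacle is the spectrum-level Pontryagin-Thom setup combined with the Browder-Novikov construction of paragraph two: verifying that the normal map $g$ is canonically defined up to normal bordism, so that $\sigma'$ descends to an invariant of the bordism class of $(f,h)$, is the core of the argument. The roles of Theorem~A and Theorem~B are then merely to identify the resulting invariant with $L_d(\pi,w)$ in the simply connected range.
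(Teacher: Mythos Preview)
Your construction of $\sigma'$ has a genuine gap: you invoke smooth transversality on the nullhomotopy $h\colon D^{d+1}\to X^\xi$, but $\xi$ is only a stable \emph{spherical fibration}, not a vector bundle. Smooth transversality requires a vector bundle reduction, and none is assumed. Indeed, the failure of Poincar\'e transversality for spherical fibrations is precisely what Theorem~\ref{bigthm:stronger-exactness} is quantifying, so you cannot bootstrap from smooth transversality to define the invariant in general. As written, $\sigma'$ is not defined.

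The paper avoids this by defining $\sigma'$ purely algebraically via Ranicki's quadratic structure on the mapping cone of the cap product of a (normal, Poincar\'e) pair (\S\ref{sec:Quinn}); this makes sense for any spherical fibration and is manifold-free. For the isomorphism statement, the paper then proves a ``$\pi$--$\pi$'' reduction (Proposition~\ref{prop:pi-pi}, Corollary~\ref{cor:pi-pi}) showing that $\mathcal F_d(X,\xi)\cong \mathcal F_d(Y,\xi)$ for $Y$ a $2$-skeleton of $X$. Over a $2$-complex, $\xi$ \emph{does} lift to $BO$, and only at that point does the paper's first injectivity proof invoke smooth transversality (and a second, manifold-free proof is also given, using the function $t$ of \eqref{eqn:t} together with Theorem~\ref{bigthm:fund-thm}). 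Your approach could be repaired by inserting this $2$-skeleton reduction before any appeal to transversality, but you would then also need an independent, non-circular definition of $\sigma'$---the paper's algebraic one---since otherwise the reduction step has nothing to compare against.

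Your surjectivity and injectivity strategies (Theorems~\ref{bigthm:sigma-onto} and~\ref{bigthm:fund-thm}) are the right inputs and match the paper's in spirit; the issue is solely in how $\sigma'$ is set up.
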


Our method of proof establishes a bit more:

\begin{bigadd} \label{bigadd:stronger-exactness} Assume $d \ge 7$.
 Then  $\sigma'$ is surjective if $\pi$ is non-trivial and $d$ is even. 
If Theorem \ref{bigthm:fund-thm} holds for non-trivial $\pi$, then $\sigma'$ is injective. 
\end{bigadd}

\subsection{Historical Background} 
A 1972  AMS  Bulletin article by Frank Quinn announced  a solution to Poincar\'e surgery along Browder's lines \cite{Quinn}. 
The announcement was a distillation of the results of the circulating unpublished manuscript \cite{Quinn_mimeo}.
Unfortunately, a crucial step in Quinn's approach relied on certain statements about desuspending cofibrations
whose proofs were soon thereafter discovered to contain serious gaps  \cite[lem.~1.5]{Quinn}, \cite[cor.~1.3]{Quinn_cofibration}.
 In fact, the statements are false, although a counterexample to them did not appear in the literature until 1993 \cite{Hutt}.
In summary, early attempts at getting the program to succeed faced challenges that were never completely overcome.

Around the same time as Quinn's article, the work of  Levitt  \cite{Levitt_thom}, \cite{Levitt_cobordism} on Poincar\'e transversality and Jones \cite{Jones} on patch spaces
appeared. These works approached the subject along different lines, each involving significant input from manifold theory.
Hodgson's 1974 paper \cite{Hodgson1974} on general position makes sophisticated
use of manifolds and is related to  Levitt's approach. Hodgson
subsequently extended \cite{Hodgson1974} in the 1984 paper \cite{Hodgson_surgery} to deduce the Fundamental Theorem.
The 1993 book by Hausmann and Vogel appeared to surmount many of the difficulties found in the earlier attempts \cite{HV}.
However, Hausmann and Vogel also imported  techniques from the theory of smooth manifolds and their methods
involve verifying a procession of nested double inductions.

In summary, none of the above  works settled the main issues in a way one might hope: 
one was left wondering whether or not the statements of Poincar\'e surgery, if true, are really theorems
of manifold theory rather than of homotopy theory.

\subsection{Remarks on our approach}
We will prove Theorem \ref{bigthm:fund-thm} by following the path of Browder's book \cite[chap.~IV]{Browder} while  inserting along the way  
the Poincar\'e embedding results of the  author \cite{Klein_haef}, \cite{Klein_compression}, \cite{Klein_sphere} 
in place of the Whitney embedding theorems. The failure of these results 
to address the case of embeddings of $3$-spheres in
Poincar\'e spaces of dimension 6 is the
reason why we must assume $d \ne 6$.
 
We are optimistic that the Poincar\'e embedding results of \cite{Klein_compression} are sufficient for
proving Theorem \ref{bigthm:fund-thm} in the case of general fundamental groups, but this will require a higher level
of refinement in passing from the geometry to the algebra. 

Our proof of Theorem \ref{bigthm:stronger-exactness} also relies on the author's earlier Poincar\'e embedding results and
 on  Theorems \ref{bigthm:fund-thm} and  \ref{bigthm:sigma-onto}.  It is manifold-free when $\pi$ is trivial. Our proof  extends to the non-simply connected
case provided that Theorem \ref{bigthm:fund-thm} holds for all fundamental groups and
provided Theorem \ref{bigthm:sigma-onto}  holds when $d$ is odd. We plan to thoroughly address the non-simply connected case in another paper.

\begin{out} Section \ref{sec:prelim} contains preliminary material. Section \ref{sec:easy} discusses variants of Poincar\'e and normal space
 bordism. 
Section \ref{sec:Quinn} sketches the definition of the surgery obstruction of a (normal, Poincar\'e) pair.  The main effort of the paper appears in section \ref{sec:bordism}: First we sketch Ranicki's
definition of the surgery obstruction of a normal map  and show that it factors through $\cal F_d(X,\xi)$. 
We then prove Theorem \ref{bigthm:sigma-onto}, relying on the material of later sections \ref{sec:toolbox} and \ref{sec:realization}
and thereafter give criteria for deciding when an element of $\cal F_d(X,\xi)$ arises from a normal map.\footnote{The reason for postponing some of the material 
to later sections is to emphasize the main ideas at the expense of the technical details appearing later.}
 In section 
\ref{sec:exact-sequence} we prove 
Theorem \ref{bigthm:stronger-exactness}. 
 Section \ref{sec:toolbox} is a toolbox of results used in Poincar\'e surgery.
In section \ref{sec:d-odd} we prove Theorem \ref{bigthm:fund-thm} for $d$ odd, and in section \ref{sec:d-even} we prove it for $d$ even.
Section \ref{sec:realization} concerns Wall realization for even dimensional $L$-groups in the setting of Poincar\'e spaces. 
Section  \ref{sec:thick} is a sketch of  the theory of Poincar\'e space thickenings in the stable range.
\end{out}

\begin{acks} This project was initiated by Andrew Ranicki's encouragement and began to take shape
during a one month visit I made to  the University of Oslo in May, 2008 during which I had intense discussions
about Poincar\'e transversality
with Bjørn Jahren to whom I am grateful. Little bursts of activity occurred here and there until 2011. 
 The interval 2011-2023
 was a period cryogenic suspension. 

The project was reanimated after I
spoke with Shmuel Weinberger at a Banff Workshop in December, 2023.  
I am indebted to Tyler Lawson for help with the proof of  Lemma \ref{lem:unstable-stable-refine}.
 Bruce Williams provided me with help on Wall's realization theorem.

\end{acks}

\section{Preliminaries} \label{sec:prelim}
\subsection{Spaces over $X$}
Let $\Top$ denote the category of compactly generated spaces, equipped with
the Quillen model structure in which a map $X\to Y$ is a weak equivalence (resp.~fibration)
if it is a weak homotopy equivalence (resp.~Serre fibration). 
A map $X\to Y$ is a cofibration if it is the result of attaching cells to $X$, or if it is the retract of such a map.
We will implicitly work with cofibrant spaces and apply cofibrant approximation whenever necessary to guarantee
cofibrancy.

A non-empty space $X$ 
is {\it $0$-connected} if it is path connected.
It is $r$-connected with  $r > 0$, if it is $0$-connected and the set of homotopy classes
$[S^k,X]$ has one element for $k\le r$.
A map of non-empty (possibly unbased) spaces is
{\it $r$-connected} if its homotopy fiber, taken with respect to
all choices of basepoint, is $(r{-}1)$-connected. Let $I = [0,1]$ denote the unit interval.

A commutative square of spaces
\[
\xymatrix{
A \ar[r] \ar[d] & C \ar[d] \\
B \ar[r] & D
}
\] 
is {\it coCartesian} if it is a homotopy pushout, i.e., the induced map
\[
B\times \{0\} \cup A \times I \cup C\times \{1\} \to D
\]
is a weak equivalence, where the source is the double mapping cylinder of the diagram $B \leftarrow A \to C$.

A space $X$ is {\it homotopy finite} if it has the homotopy type of a finite CW complex.
It is of {\it finite type} if it has the homotopy type of a CW complex having finitely many cells in each dimension.
A {\it $k$-skeleton} for a space $X$  is a $k$-connected map of spaces $Y\to X$ in which $Y$ is a CW complex of dimension $\le k$.

A {\it stable map} $X \to  Y$ of based spaces is a map of suspension spectra $\Sigma^\infty X\to \Sigma^\infty Y$.
The abelian group of homotopy classes of stable maps $X\to Y$ is denoted by $\{X,Y\}$.
When $X$ is a finite complex,  a stable map is represented by a map of based spaces
 $\Sigma^j X \to \Sigma^j Y$ for some $j \ge 0$, where $\Sigma^j X$ is the reduced $j$-fold suspension of $X$. 
We identify two such maps if they agree after some iterated suspension.

Let $\Top_{/X}$ be the category of spaces over $X$, whose objects
are spaces $Y$ equipped with (structure) map $Y\to X$ and a morphism is a map
of spaces which is compatible with the structure map. Let $\Top_{/X} \to \Top$
be the forgetful functor. We equip $\Top_{/X}$ with the induced model structure
in which a morphism is a weak equivalence (resp.~fibration) if it is so when considered in $\Top$.
A morphism is a cofibration if it possesses the left lifting property with respect to the acyclic fibrations.
It can be shown that an object $Y$ is cofibrant if it is obtained from the initial object by attaching
cells, or if it is a retract thereof. By cofibrant approximation, any object is weakly equivalent to a cofibrant one, and
we will apply cofibrant approximation whenever required.
A morphism $Y\to Z$ of $\Top_{/X}$ is $k$-connected if it is when considered in $\Top$.
In particular, an object $Y$ is $k$-connected if and only if its structure map $Y\to X$ is $(k+1)$-connected.

Let $I = [0,1]$ denote the unit interval. 
The {\it unreduced fiberwise suspension}
is the functor $S_X \: \Top_{/X} \to \Top_{/X}$ which assigns to
an object $Y$ the double mapping cylinder
\[
S_X Y = (X \times \{0\}) \cup Y\times I \cup (X \times \{1\})\, .
\]

\subsection{Stable spherical fibrations}
Let $BG$ be the classifying space of stable spherical fibrations \cite{Stasheff_BG}. 
This is the classifying space of the grouplike topological monoid $G$ consisting of
stable self equivalences of the zero sphere $S^0$.

By a {\it stable spherical fibration} over a space $X$, we mean a map
$\xi\: X\to BG$. In other words, it is an object
\[
X \in \Top_{/BG} \, .
\]
To relate this notion to the classical one, 
let $EG\to BG$ be a universal classifying fibration in which $EG$ is a free contractible
$G$-space. Using $\xi$, we may form the fiber product
 $\tilde X = X \times^{BG} EG$; the latter is equipped with a free action of $G$. Let $S$ be the sphere spectrum. Then 
we obtain a parametrized spectrum
  \[
 \tilde X\times_G S \to X
 \]
 whose fibers are spherical.

The {\it Thom spectrum} $X^\xi$ of $\xi$ 
is the homotopy orbit
 spectrum
 \[
X^\xi := (\tilde X_+) \smsh_{hG} S\, .
 \]
 \begin{notation}
If $Y \in \Top_{/X}$ is an object, we will typically abuse notation by denoting the induced
stable spherical fibration $Y \to X @>>> BG$ by $\xi$.
\end{notation}

The assignment $Y \mapsto Y^\xi$ defines a spectrum-valued functor
\[
(-)^\xi\: \Top_{/X} \to \Sp
\]
which gives rise to an unreduced homology theory on $\Top_{/X}$:
\[
\cal H_\ast(Y;\xi) := \pi_\ast(Y^\xi)\, .
\]

\subsection{Poincar\'e duality} 
A  space $P$ is a {\it Poincar\'e duality space} 
of  dimension $d$ if there exists a pair
\[
(L,[P])
\]
in which $L$ is a  bundle  of coefficients that is locally isomorphic
to $\Bbb Z$, and  $[P] \in H_d(P;L )$ is a homology class
such that the associated cap product homomorphism
\[
\cap [P]\:H^*(P;\cal B) \to H_{d{-}*}(P;\cal B \otimes L )
\]
is an isomorphism in all degrees for every local coefficient bundle $\cal B$ (cf.\  \cite{Wall_PD}).
If the pair $(L,[P])$ exists, then it
is defined up to unique isomorphism; $L$ is called
the {\it orientation sheaf} and $[P]$ the {\it fundamental class}.

Similarly, one has the notion of {\it Poincar\'e pair} $(P,\partial P)$
in which now $[P] \in H_d(P,\partial P;L )$
induces an isomorphism
\[
\cap [P]\:H^*(P;\cal B) \to H_{d{-}*}(P;\partial P; \cal B\otimes L)\, .
\]
Additionally, if $[\partial P]$ is
 the image of $[P]$ under the boundary homomorphism
$H_d(P,\partial P;L) \to 
H_{d-1}(\partial P;L_{|\partial P})$, one 
also requires that
\[
(L_{|\partial P},[\partial P])
\] 
equips $\partial P$ with the structure of a Poincar\'e space. 

In this paper, we will assume that Poincar\'e spaces (pairs)
are homotopy finite, i.e., they have the homotopy type of a finite CW complex
(resp.~finite CW pair).

\subsection{The Spivak normal fibration} Let $P$ be a Poincar\'e duality space of dimension $d$. 
Let $\xi\: P \to BG$  be a  stable spherical fibration.
An orientation sheaf $L$ on $P$ for $\xi$ is defined by the first Stiefel-Whitney class of $w = w_1(\xi) \in H^1(X,\Bbb Z_2)$ which
is defined  as the pullback of the generator of  $H^1(BG;\Bbb Z) = \Bbb Z_2$, where $\Bbb Z_n$ denotes the cyclic
group of order $n$. Note that  $w$ may be regarded as a homomorphism $\pi\to \Bbb Z_2$, where $\pi$ is the fundamental groupoid of $P$.

The stable spherical fibration $\xi$ is  a {\it Spivak fibration} for $P$ if comes equipped with a map $\hat \alpha\: S^d \to P^\xi$
such that 
\[
[P] = \hat \alpha_\ast([S^d])\in H_{d}(P^\xi) \cong H_d(P; L)
\]
is the fundamental class, where $[S^d] \in H_d(S^d;\Bbb Z)$ is the generator \cite{Spivak}, \cite{Browder_pd-surgery}, \cite{Klein_dualizing};
note that in the display we have used the Thom isomorphism. The pair $(\xi,\alpha)$ is unique up to contractible choice \cite[I.4.19]{Browder}, \cite{Klein_dualizing}.\footnote{In \cite{Klein_dualizing} we employ a different convention in which $\xi$ has virtual dimension $-d$.}

Let $\alpha\in \pi_d(P^\xi)$ denote the homotopy class of $\hat \alpha$; it is the 
 {\it  normal invariant} of the pair $(P,\xi)$. In particular, the  Hurewicz image of $\alpha$
is the fundamental class $[P]$.

If $(P,\partial P)$ is a Poincar\'e pair, then one has a Spivak fibration $\xi$,
which can be defined by pulling back the Spivak fibration of the double $D(P) = P \cup_{\partial P} P$
along the left inclusion $P \subset D(P)$. In this instance, the normal invariant $\alpha$ is a  homotopy class of pairs
$(D^{n},S^{n-1}) \to  (P^\xi,(\partial P)^\xi)$ or equivalently, a homotopy class
\[
S^n \to P^\xi/(\partial P)^\xi\, .
\]

\section{Bordism Theories} \label{sec:easy}

Let  $\xi\: X\to BG$ be a stable spherical fibration, where $X$ is a space of finite type.

\subsection{Poincar\'e and normal spaces over $X$}
An object 
\[
P\in \Top_{/X}
\] is a {\it Poincar\'e space over $X$} if its underyling space $P$ has the structure of a Poincar\'e space 
in which the Spivak fibration of $P$ is the pullback of $\xi$ to $P$. More precisely, if  $P \to X$ is the structure map, then
one requires a choice of normal invariant $\alpha\: S^d \to P^{\xi}$.

An object
\[
Q\in \Top_{/X}
\]
is a
 {\it normal space} over $X$ of dimension $d$ if it is homotopy finite and it comes equipped with a homotopy
 class
\[
\alpha\: S^d \to Q^{\xi}\, .
\]
By analogy, we will also call $\alpha$ the normal invariant of the normal space.

A {\it normal pair} of dimension $d+1$ over $X$ is defined by a homotopy finite pair of spaces
\[
(Q,\partial Q)\, ,
\]
over $X$,  and a preferred homotopy class
\[
\alpha\: (D^{d+1},S^d) \to (Q^\xi,(\partial Q)^\xi)\, .
\]
In particular, the boundary $(\partial Q,\alpha_{|\partial Q})$ is a normal space over $X$ of dimension $d$.
Observe that $\alpha$ is equivalent to specifying a homotopy class $S^{d+1} \to Q^\xi/(\partial Q)^\xi$.

A normal space of dimension $d$ over $X$ is said to {\it bound} if it is the boundary
of a normal pair over $X$ of dimension $d+1$. A normal pair $(Q,\partial Q)$ over $X$ is a  {\it (normal, Poincar\'e)} pair over $X$  if its boundary is Poincar\'e.
Similarly, it is a {\it Poincar\'e pair} over $X$ if $\alpha$ and $w_1(\xi)$ equip  $(Q,\partial Q)$ with the structure of a Poincar\'e pair.

\subsection{Normal space bordism}
 Normal spaces $Q_0$ and $Q_1$ over $X$ of dimension $d$ are {\it bordant} if 
 \[
 (Q_0 \amalg Q_1)
 \]
 bounds, where the normal invariant for the latter  is the homotopy class of the composition
\begin{equation} \label{eqn:pinch}
 S^d @> \text{pinch} >> S^d \vee S^d @>\alpha_0 \vee -\alpha_1>> Q_0^\xi \vee Q_1^\xi\, ,
 \end{equation}
 in which $\alpha_i$ is the normal invariant for $Q_i$.
 
The set of such bordism classes forms an abelian group $\Omega_d^Q(X,\xi)$.
The operation which sends a normal space $Q$ over $X$ to the composition
\[
S^d @>\alpha >> Q^\xi \to X^\xi
\] 
defines a homomorphism
\[
\phi\:  \Omega_d^Q(X,\xi)\to \pi_d(X^\xi)\, .
\] 
 
 \begin{lem}[{cf.~\cite[prop.~3.9]{HV}}] \label{lem:phi}   The homomorphism $\phi$ is an isomorphism.
 \end{lem}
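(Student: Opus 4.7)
The plan is to verify surjectivity and injectivity of $\phi$ directly from the definitions. The statement is essentially tautological: normal spaces over $X$ are by design the geometric witnesses of classes in $\pi_d(X^\xi)$, with no duality condition imposed beyond the existence of a normal invariant.

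For surjectivity, let $\beta\in \pi_d(X^\xi)$ and pick a representative $S^d \to X^\xi$. Because $X$ is of finite type, $X^\xi$ is a homotopy colimit of the Thom spectra $Y^\xi$ as $Y$ ranges over finite skeleta of $X$. A stable map from a finite spectrum into such a colimit factors through one of the pieces, so $\beta$ admits a lift $\beta'\: S^d \to Y^\xi$ for some homotopy finite $Y$. Equipping $Y$ with the structure map $Y\hookrightarrow X$ and normal invariant $\beta'$ produces a normal space over $X$ with $\phi([Y,\beta'])=\beta$.

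For injectivity, suppose $\phi([Q,\alpha])=0$, so that the composition $g\: S^d\xrightarrow{\alpha} Q^\xi \to X^\xi$ extends to a null-homotopy $H\: D^{d+1}\to X^\xi$. The structure map $Q\to X$ and the map $H$ both factor through some finite skeleton $Y\subseteq X$, so after replacing $X$ by $Y$ we may assume $X$ itself is homotopy finite. Form the mapping cylinder $M$ of $Q \to X$, regarded as an object of $\Top_{/X}$ via the retraction $M\to X$, and take $\partial M = Q$. Since $M\to X$ is a weak equivalence, the induced map $M^\xi \to X^\xi$ is an equivalence, and $H$ lifts to a null-homotopy $\tilde H\: D^{d+1} \to M^\xi$ of the image of $\alpha$ under $Q^\xi \to M^\xi$. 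The pair $(\tilde H, \alpha)\: (D^{d+1}, S^d) \to (M^\xi, Q^\xi)$ equips $(M,Q)$ with the structure of a normal pair over $X$ whose boundary is $(Q,\alpha)$, so $[Q,\alpha]=0$.

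The only mild technical concern is the reduction to the case of homotopy finite $X$, needed so that $(M,Q)$ is a homotopy finite pair; this is a routine application of the finite-skeleton principle, and I expect no serious obstacle. Well-definedness of $\phi$ on bordism classes, and its additivity with respect to the pinch-map group law on $\Omega^Q_d(X,\xi)$, are immediate from the definitions.
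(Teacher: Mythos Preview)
Your proposal is correct and follows essentially the same route as the paper: reduce to finite $X$, use the base itself (or a finite skeleton) equipped with the given class as the normal space for surjectivity, and use the mapping cylinder of $Q\to X$ together with a chosen null-homotopy as the bounding normal pair for injectivity. The only cosmetic difference is that the paper performs the reduction to finite $X$ once at the outset (passing to the $(d{+}1)$-skeleton and invoking cellular approximation on both sides), whereas you thread the finiteness argument separately through each direction.
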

 
 \begin{proof}  We may assume that $X$ is a CW complex with a finite number of cells in each dimension. Let $Y$ be
 the $(d+1)$-skeleton of $X$. By the cellular approximation theorem, the homomorphisms
 \[
 \Omega_d^Q(Y;\xi)\to  \Omega_d^Q(X,\xi) \quad \text{ and } \quad \pi_d(Y^\xi)  \to \pi_d(X^\xi)
 \]
 are isomorphisms. Consequently, we may additionally assume that $X$ is a finite complex.
 
Then every homotopy class $\alpha\: S^d \to X^\xi$ equips $X$ with the structure of a normal space of dimension $d$. It follows that
$\phi$ is onto. Similarly, if $\phi([Q]) = 0$, then the composition
\[
S^d \to Q^\xi \to X^\xi
\]
is trivial. Let $W$ be the mapping cylinder of $Q\to X$. Then one may choose a null homotopy
$(D^{d+1},S^d) \to (W,Q)$. We infer that  the  bordism class of $Q$ is trivial, so $\phi$ is one-to-one.
\end{proof}

\subsection{Poincar\'e Bordism}
Let $P_0$ and $P_1$ be Poincar\'e spaces over $X$ of dimension $d$ with normal invariants $\alpha_0$ and $\alpha_1$. 
Then $P_0$ and $P_1$ are {\it bordant} if 
 \[
 P_0 \amalg P_1
 \]
 bounds a Poincar\'e space over $X$ of dimension $d+1$, with normal invariant  \eqref{eqn:pinch}.
 
The associated abelian group of equivalence classes
 is denoted by 
 \[
 \Omega_d^P(X,\xi)\, .
 \]
 
\subsection{Bordism of (normal, Poincar\'e) pairs}
Similarly, we let
\[
\Omega_{d}^{QP}(X,\xi)
\] 
denote the bordism group of (normal, Poincar\'e) pairs over $X$ of dimension $d$.  
Then for formal reasons,  there is a long exact sequence 
\[
\cdots \to \Omega_{d+1}^{QP}(X,\xi) @> \partial >>  \Omega_d^P(X,\xi) \to \Omega_d^Q(X,\xi)@ > \partial >> \Omega_{d-1}^{QP}(X,\xi)\to \cdots\, .
\]
Let $\cal F_d(X,\xi)$ denote the bordism group of pairs 
\[
 (P,\alpha)
\]
 where  $P$ is a Poincar\'e space over $X$ of dimension $d$ with Spivak fibration $\xi$
and 
\[
\alpha\: (D^{d+1},S^d) \to (X^\xi,P^\xi)
\]
is a homotopy class such that $\alpha_{|S^d}\: S^d \to P^\xi$
has degree one (in the display we have slightly abused notation in that $X^\xi$ should be replaced by the mapping
cylinder of the map $P^\xi \to X^\xi$).

\begin{prop}  \label{prop:phi} There is a natural isomorphism
\[
\Omega_{d+1}^{QP}(X,\xi) \cong \cal F_d(X,\xi)\, .
\]
Consequently, there is a long exact sequence of abelian groups
\[
\cal F_d(X,\xi) \to  \Omega_d^P(X,\xi) \to \pi_d(X^\xi)  @> \partial >> \cal F_{d-1}(X,\xi)\to \cdots
\]
\end{prop}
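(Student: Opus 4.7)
The plan is to construct mutually inverse homomorphisms between $\Omega_{d+1}^{QP}(X,\xi)$ and $\cal F_d(X,\xi)$, and then to deduce the long exact sequence by feeding these identifications into the bordism long exact sequence of (normal, Poincar\'e) pairs stated just above, together with Lemma \ref{lem:phi}.

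For the forward map $\Phi$, given a (normal, Poincar\'e) pair $(Q,\partial Q)$ over $X$ of dimension $d+1$ with normal invariant $\alpha\: (D^{d+1},S^d) \to (Q^\xi, (\partial Q)^\xi)$, I set $P := \partial Q$, equipped with the structure map $\partial Q \hookrightarrow Q \to X$, and postcompose $\alpha$ with the Thom spectrum map $Q^\xi \to X^\xi$ induced by the structure map of $Q$ to obtain $\tilde\alpha\: (D^{d+1},S^d) \to (X^\xi,P^\xi)$. The restriction $\tilde\alpha_{|S^d}$ coincides with the normal invariant of $\partial Q$ and hence has degree one, so $(P,\tilde\alpha) \in \cal F_d(X,\xi)$. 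A bordism of (normal, Poincar\'e) pairs induces a bordism at the level of $\cal F_d$, so $\Phi$ is a well-defined homomorphism.

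For the inverse $\Psi$, given $(P,\alpha) \in \cal F_d(X,\xi)$, I regard $\alpha$ as a stable map $S^{d+1} \to X^\xi/P^\xi$. Because $X$ is of finite type, cellular approximation provides a finite subcomplex $Y \subset X$ containing the image of $P \to X$ through which $\alpha$ factors as a map into $(Y^\xi,P^\xi)$. Setting $Q := M(P \to Y)$, the mapping cylinder, produces a homotopy finite space, and the weak equivalence $Y \to Q$ lifts $\alpha$ to a normal invariant $(D^{d+1},S^d) \to (Q^\xi, P^\xi)$. The degree-one hypothesis on $\alpha_{|S^d}$ ensures the boundary $P$ is Poincar\'e, so $(Q,P)$ is a (normal, Poincar\'e) pair over $X$. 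Independence of the choice of $Y$ is checked by nesting into a common larger finite subcomplex and using the trace mapping cylinder as a bordism. The relation $\Phi\circ\Psi = \id$ is immediate from the constructions; $\Psi\circ\Phi = \id$ follows by collaring $\partial Q$ inside $Q$ and choosing a finite-type factorization of $Q \to X$.

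The long exact sequence
\[
\cdots \to \Omega_{d+1}^{QP}(X,\xi) \to \Omega_d^P(X,\xi) \to \Omega_d^Q(X,\xi) \to \Omega_{d-1}^{QP}(X,\xi) \to \cdots
\]
then produces the sequence of the proposition after substituting $\Omega_d^Q(X,\xi) \cong \pi_d(X^\xi)$ via Lemma \ref{lem:phi} and the isomorphism just established. The main obstacle I anticipate is verifying well-definedness of $\Psi$ on bordism classes: the finite-type hypothesis on $X$ forces a choice of approximating finite subcomplex, and one must show the resulting bordism class is independent of this choice --- essentially a cofinality argument on the directed system of finite subcomplexes of $X$ through which the relevant maps factor.
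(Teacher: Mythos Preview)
Your proposal is correct and follows essentially the same route as the paper: the forward map is the forgetful map, the inverse is built by passing to a finite approximation of $X$ and taking a mapping cylinder, and the long exact sequence follows by substituting this isomorphism together with Lemma~\ref{lem:phi} into the formal bordism sequence. The paper's own proof is terse---it just says the forgetful map is an isomorphism ``by an argument similar to the one that proves Lemma~\ref{lem:phi}''---so you have simply spelled out the details the author leaves implicit; in particular, your anticipated ``main obstacle'' about independence of the finite subcomplex $Y$ is precisely the kind of cofinality check the paper suppresses.
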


\begin{proof}  Since we have  isomorphism $\Omega^Q_d(X,\xi) \cong \pi_d(X^\xi)$, we only need to
 identify $\Omega_{d+1}^{QP}(X,\xi)$ with $\cal F_d(X,\xi)$.
But this is straightforward: a homomorphism $\Omega_{d+1}^{QP}(X,\xi) \to \cal F_d(X,\xi)$ is defined by forgetting data. It is an isomorphism
by an argument similar to the one that proves Lemma \ref{lem:phi}.
\end{proof}



\section{The surgery obstruction of a normal space}\label{sec:Quinn}


Given a spherical fibration $\xi \: X\to BG$, with $X$ connected, 
we set $\pi = \pi_1(X)$, and $w = w_1(\xi)$.
Let $Q$ be a normal space over $X$, with normal invariant $\alpha$.
Let 
\[
\Delta \: Q^\xi  \to Q^\xi \smsh Q_+
\] 
be the reduced diagonal.
Then the composition
\[
S^d @>\alpha >> Q^\xi @> \Delta >> Q^\xi \smsh Q_+ 
\]
is an element of the abelian group $\pi_d(Q^\xi \smsh Q_+ )$. By $S$-duality,
$\pi_d(Q^\xi \smsh Q_+ )$ is canonically isomorphic to the  group of homotopy classes of maps
of spectra 
\[
\Sigma^d D(Q^\xi) \to  \Sigma^\infty (Q_+)\, ,
\] where $D(Q^\xi)$ is the $S$-dual of $Q^\xi$, i.e., the function spectrum
$F(Q^\xi,S^0)$.  

By pulling back along the $\pi$-covering space  $\tilde Q \to Q$ and applying
Ranicki's $\pi$-equivariant $S$-duality (\cite[\S3]{Ranicki}), we obtain in a similar way a $\pi$-equivariant stable map
\[
\Sigma^d D(\tilde Q^\xi) \to \Sigma^\infty(\tilde Q_+)\, .
\]
The latter in turn gives rise to a (singular) chain  map of free $\Bbb Z[\pi]$-modules
\[
\cap [Q]\: C^{d-*}(\tilde Q; L) \to C_{*}(\tilde Q_+)\, ,
\]
where in this instance $[Q]$ is a $d$-cycle in $C_d(\tilde Q;L) = C_d(Q^\xi)$ representing the fundamental class as defined 
by the Hurewicz image of $\alpha$.

More generally, suppose that $(Q,\partial Q)$ is a (normal, Poincar\'e) pair over $X$.
Then we obtain a chain map
\[
\cap [Q]\: C^{d-\ast}(\tilde Q,\partial \tilde Q; L) \to C_{*}(\tilde Q_+)\, .
\]
Let $C_\ast$ denote the homotopy fiber the latter, i.e., the desuspension of the algebraic mapping cone
of $\cap [Q]$.

Ranicki equips $C_\ast$  with the structure of a quadratic algebraic Poincar\'e complex of dimension $d-1$
over $\Bbb Z[\pi]$ with $w$-twisted involution \cite[p.~45]{Ranicki_top}. 
That is,  there is  $(d-1)$-cycle  
\[
\psi \in Z_{d-1}((C_*\otimes_{\Bbb Z[\pi]} C_*)_{h\Bbb Z_2})
\]
i.e., a {\it quadratic structure}, defined up to contractible choice, whose associated slant product map
\[
C^{d-1-*}\to C_\ast
\]
 is a quasi-isomorphism. 
 
 \begin{rem} The chain complex $(C_*\otimes_{\Bbb Z[\pi]} C_*)_{h\Bbb Z_2}$ is defined as
\[
W_{\%} C := W_\ast \otimes_{\Bbb Z[\Bbb Z_2]} (C_*\otimes_{\Bbb Z[\pi]} C_*)\, ,
 \]
 where $W_\ast$ is the acyclic chain complex defined by the 
minimal $\Bbb Z[\Bbb Z_2]$-resolution of $\Bbb Z$, and $\Bbb Z_2$ acts on $C_*\otimes_{\Bbb Z[\pi]} C_*$
by permuting factors.
  
 When defining the tensor product $C_*\otimes_{\Bbb Z[\pi]} C_*$,
 one uses the $w$-twisted involution of $\Bbb Z[\pi]$ defined by
 \[
 g \mapsto w(g)g^{-1}\, , \quad g\in \pi
 \]
 to equip the left $\Bbb Z[\pi]$-chain complex $C_\ast$ with the structure of a right $\Bbb Z[\pi]$-chain complex. Alternatively, one may define
 $C_*\otimes_{\Bbb Z[\pi]} C_*$ as 
 \[
 \Bbb Z^w \otimes_{\Bbb Z[\pi]} (C_*\otimes_{\Bbb Z} C_*)\, ,
 \]
 where $\Bbb Z^w$ is the $\Bbb Z[\pi]$-module defined by $w_1(\xi)$ and $C_*\otimes_{\Bbb Z} C_*$ is  considered
 as a left $\Bbb Z[\pi]$-chain complex using the standard diagonal action of $\pi$.

 \end{rem}

The quadratic algebraic Poincar\'e bordism class of $(C_\ast,\psi)$ defines  an element of the quadratic $L$-group
\[
\sigma'(Q,\partial Q) \in L_{d-1}(\pi,w)\, .
\]
This is the {\it surgery obstruction} of the (normal, Poincar\'e) pair $(Q,\partial Q)$.
 
In the above, the quadratic $L$-group $L_{d-1}(\pi,w)$ may be defined as the bordism group of 
quadratic algebraic Poincar\'e complexes of dimension $d-1$ over $\Bbb Z[\pi]$ equipped with 
$w$-twisted involution \cite[\S3]{Ranicki-1}. It is canonically isomorphic to the corresponding Wall $L$-group \cite{Wall}, 
\cite[p.~93,\S4-5]{Ranicki-1}.

 If the bordism class  of the (normal, Poincar\'e) pair $(Q,\partial Q)$  admits a  Poincar\'e pair representative,
then $\sigma'(Q,\partial Q) = 0$.

The surgery obstruction  defines a homomorphism
\[
\sigma' \: \cal F_d(X,\xi) \to L_d(\pi,w)\, .
\]

\section{Bordism of Poincar\'e normal  maps and Theorem \ref{bigthm:sigma-onto}} \label{sec:bordism}

Let $P$ and $Q$ be normal spaces over $X$ of dimension $d$, where $X$ has the homotopy type of a finite complex.

\begin{defn}
A {\it normal map} over $X$
\[
f\:P\to Q
\]  
is a morphism of $\Top_{/X}$ that preserves normal invariants. 
\end{defn}

In this paper we only consider normal maps when $P$ and $Q$ are
Poincar\'e. Similarly, one has the notion of a  normal map 
$(P,\partial P) \to (Q,\partial Q)$ of Poincar\'e pairs over $X$.

\subsection{The surgery obstruction of a normal map} 
By Ranicki's algebraic theory of surgery \cite{Ranicki}, a normal map $f\: P \to Q$ of $d$-dimensional Poincar\'e spaces
over $X$ has a surgery obstruction 
\[
\sigma(f) \in L_d(\pi,w)
\]
which is an invariant of its normal bordism class. 

To outline the construction, let $\tilde X \to X$ be a universal cover and let 
$\tilde f\: \tilde P \to \tilde Q$ be the base change of $f$ to $\tilde X$. Then $\tilde f$ is $\pi$-equivariant,
where $\pi = \pi_1(X)$. The bundle $\xi$ over $X$ pulls back as well to $\tilde P$ and $\tilde Q$ so
we can take the induced map of Thom spaces
\[
\tilde f^\xi\: \tilde P^\xi \to \tilde Q^\xi
\]
which is again $\pi$-equivariant. Since $X$ has the homotopy type of a finite complex, both $\tilde P^\xi$ and $\tilde Q^\xi$ have the homotopy type of homotopy finite $\pi$-CW complexes
which are free away from
the basepoint. In particular, we can take the equivariant $S$-dual in the sense of \cite[\S3]{Ranicki} which results in an equivariant stable {\it umkehr map}
\[
f^!\: \tilde Q_+  \,\, \to  \, \tilde P_+\, .
\]
Let $C_*(f^!)$ denote the algebraic mapping cone
of the the chain map associated with $f^!$.  Ranicki shows that  $C_*(f^!)$ is equipped with the
 structure of a $d$-dimensional quadratic algebraic Poincar\'e complex over $\Bbb Z[\pi]$ \cite[prop.\ 2.3]{Ranicki}. 
 As previously remarked, the bordism group of such data coincides with Wall's $L$-group $L_d(\pi,w)$, where $w = w_1(\xi)$ 
and the algebraic bordism class of $C_*(f^!)$ is the surgery obstruction $\sigma(f)$ of $f$ \cite[\S7]{Ranicki}.
Consequently,
we obtain a  function
\[
\sigma\: \cal N(Q,\xi) \to L_d(\pi,w) 
\]
such that $\sigma(f) = 0$ if $f$ is normally cobordant to a weak equivalence.

More generally, if $f\:(P,\partial P) \to (Q,\partial Q)$ is a normal map of Poincar\'e pairs over $X$ whose restriction $\partial P \to \partial Q$ is a weak equivalence, then
the surgery obstruction $\sigma(f) \in L_d(\pi,w)$ is defined similarly. It vanishes if $f$ is normally cobordant rel $\partial P$ to a weak equivalence.





\begin{proof}[Proof of Theorem \ref{bigthm:sigma-onto}]  Let $j\: K \to Q$ be a 2-skeleton, i.e., a $2$-connected map from a finite complex of dimension $\le 2$.
Then by \cite[thm.~A]{Klein_haef} (cf.~Theorem \ref{my_thm}), there is a Poincar\'e embedding of $j$, i.e, there is a
coCartesian square
\[
\xymatrix{
\partial \bar K \ar[r] \ar[d] & Q_0\ar[d] \\
\bar K \ar[r] & Q
}
\]
and a homotopy equivalence $K @> \simeq >> \bar K$ such that the composition $K \to \bar K\to Q$ is $j$
(cf.~\S\ref{subsec:embedding}).
Here, $(\bar K,\partial \bar K)$ and $(Q_0,\partial \bar K)$ are Poincar\'e pairs of dimension $d$ whose fundamental classes
 are compatible with the fundamental class of $Q$. Furthermore, the inclusion $\partial \bar K\to \bar K$ is $(d-3)$-connected.
As a consequence,  $Q$ is identified with the double mapping cylinder
\[
\bar K\cup_{\partial \bar K \times \{0\}} (\partial \bar K \times I)\cup_{\partial \bar K\times \{1\}} Q_0\, .
\]
 By Wall realization in the Poincar\'e category (Theorem \ref{thm:Wall_realization}),
if $\tau \in L_d(\pi,w) $ is an element, there is a Poincar\'e pair
$(W,\partial W)$  of dimension $d$ with $\partial W = \partial_0 W\amalg \partial_1 W$,
and a normal map 
\[
F\: (W,\partial_0 W,\partial_1 W)  \to (\partial \bar K \times [0,1],\partial \bar K\times \{0\},\partial \bar K\times \{1\})
\]
such that $\partial_0 W \to \bar K\times \{0\}$ is the identity,
$\partial_1 W \to \bar K \times \{1\}
$ is a homotopy equivalence, and the surgery obstruction
of $F$ rel boundary is  $\tau$.
\begin{figure} 
\centering
\begin{tikzpicture}[scale=.75]
    \draw[thick] (0,0) ellipse (3cm and 2cm);
    
    \draw[thick, black] plot[smooth cycle, tension=1,scale=1.2] coordinates {(0,0.5) (1,1) (1.5,0) (1,-1) (0,-0.5) (-1,-1) (-1.5,0) (-1,1)};
       \fill[white!60!gray!60] plot[smooth cycle, tension=1,scale=1.2] coordinates {(0,0.5) (1,1) (1.5,0) (1,-1) (0,-0.5) (-1,-1) (-1.5,0) (-1,1)};
    \draw[thick, scale=.2] (0,0) ellipse (3cm and 2cm);
    \fill[white, scale=.2](0,0) ellipse (3cm and 2cm);
    \draw[thick,->] (3.5,0) to[bend left=20] (6.5,0);
    \node at (5,.6) {{\tiny $f$}};
     \node at (0,0) {{\tiny $\bar K$}}; 
      \node at (1,.5) {{\tiny $W$}};
         \node at (2.5,0) {{\tiny $Q_0$}};
\end{tikzpicture} \hskip .2cm
\begin{tikzpicture}[scale=.75]
    \draw[thick] (0,0) ellipse (3cm and 2cm);
         \draw[thick, scale =.6] (0,0) ellipse (3cm and 2cm);
           \fill[white!60!gray!60, scale =.6] (0,0) ellipse (3cm and 2cm);
        \draw[thick, scale =.2] (0,0) ellipse (3cm and 2cm);
         \fill[white,scale =.2](0,0) ellipse (3cm and 2cm);
     \node at (0,0) {{\tiny $\bar K$}}; 
      \node at (.78,.65) {{\tiny $\partial \bar K {\times} I$}};
         \node at (2.5,0) {{\tiny $Q_0$}};
\end{tikzpicture}
\caption{A depiction of the Poincar\'e normal map  constructed in the proof of Theorem \ref{bigthm:sigma-onto}. 
The white region $\bar K \amalg Q_0$ is mapped using the identity. On the gray region,  the map
supplied by Theorem \ref{thm:Wall_realization} is used.
}
\label{fig:schematicF}
\end{figure}
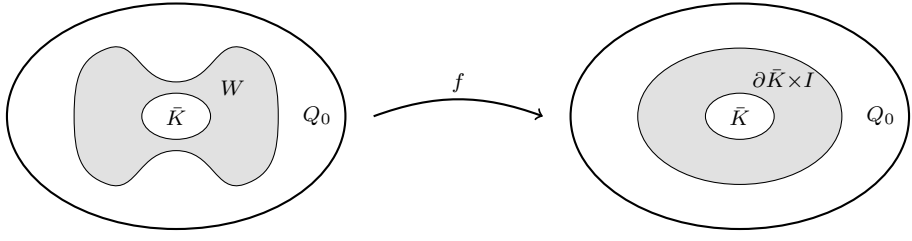

Let $f\: P \to Q$ be defined as 
\[
 P := \bar K\cup_{\partial_0 W} W \cup_{\partial_1 W} Q_0  @> \text{id}_{\bar K} \cup F \cup \text{id}_{Q_0} >>
\bar K\cup_{\partial \bar K \times \{0\}} (\partial \bar K \times I)\cup_{\partial \bar K\times \{1\}} Q_0 = Q\, .
 \]
Then $f$  is a Poincar\'e normal map with surgery obstruction $\tau$  (cf.~\!fig.~\!\ref{fig:schematicF}).
\end{proof}

\subsection{Normal maps and normal pairs}
Let $f\: P \to Q$ be a normal map of Poincar\'e spaces over $X$
of dimension $d$. Let $M_f$ denote the mapping cylinder
of $f$ and set $\partial M_f := P \amalg Q$. Then $(M_f,\partial M_f)$  forms
 a (normal, Poincar\'e) pair over $X$ of dimension $d+1$.

\begin{lem} \label{lem:normal=quinn} The surgery obstruction of the normal map $f$ coincides with 
the surgery obstruction of the (normal, Poincar\'e) pair  $(M_f,\partial M_f)$. That is,
\[
\sigma(f)  = \sigma'(M_f,\partial M_f) \in L_d(\pi,w)\, .
\]
\end{lem}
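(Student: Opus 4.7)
The plan is to show that $\sigma(f)$ and $\sigma'(M_f,\partial M_f)$ are represented by the same quadratic algebraic Poincaré complex over $\Bbb Z[\pi]$, by matching Ranicki's two constructions at the chain level.

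First I would verify that $(M_f,\partial M_f)$ is indeed a $(d{+}1)$-dimensional (normal, Poincaré) pair over $X$ whose normal invariant and fundamental class correspond to those of $P$. The retraction $M_f\to Q$ is a homotopy equivalence, so the Spivak fibration and orientation of $Q$ transport to $M_f$. Since $Q\hookrightarrow M_f$ is an acyclic cofibration, collapsing $Q^\xi$ inside $M_f^\xi$ yields a contractible space, and a further collapse of $P^\xi$ produces a cofibre equivalence $M_f^\xi/(\partial M_f)^\xi\simeq \Sigma P^\xi$. Under this identification the pair's normal invariant $\alpha$ is the suspension of $\alpha_P$, and $[M_f]$ is the suspension of $[P]$; in particular the pair is (normal, Poincaré) of the claimed dimension.

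Next I would match the chain complexes carrying the quadratic structure. Working over $\Bbb Z[\pi]$ on universal covers, excision together with the above cofibre equivalence yields
\[
C^{d+1-*}(\tilde M_f,\partial \tilde M_f;L)\,\simeq\, C^{d-*}(\tilde P;L),
\]
while $C_*((\tilde M_f)_+)\simeq C_*(\tilde Q_+)$ by homotopy invariance. Combining with the Spivak duality quasi-isomorphism $\cap[P]\:C^{d-*}(\tilde P;L)\simeq C_*(\tilde P_+)$, the cap product $\cap[M_f]$ factors up to quasi-isomorphism as the composition
\[
C^{d-*}(\tilde P;L)\xrightarrow{\cap[P]} C_*(\tilde P_+)\xrightarrow{f_*} C_*(\tilde Q_+).
\]
Its homotopy fibre is thus quasi-isomorphic to that of $f_*$, which by $\pi$-equivariant $S$-duality (the very definition of Ranicki's umkehr $f^!$) is chain-equivalent to the algebraic mapping cone $C_*(f^!)$ used to define $\sigma(f)$.

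Finally I would check that the quadratic refinements agree. Both are instances of Ranicki's functorial $W_\%$-construction applied to compatible input: for $\sigma(f)$, the Thom diagonal of $\tilde Q^\xi$ composed with $\alpha_Q$; for $\sigma'(M_f,\partial M_f)$, the relative Thom diagonal of $(\tilde M_f,\partial \tilde M_f)^\xi$ paired with $\alpha$. Since $\alpha$ is the suspension of $\alpha_P$ and the diagonals intertwine under the cofibre equivalence of the first step, the two quadratic Poincaré complexes represent the same bordism class in $L_d(\pi,w)$. The main obstacle is this last step: the identification of the underlying symmetric complexes is essentially formal Poincaré-Lefschetz duality, but tracking the quadratic lift through the cofibre, excision, and $S$-duality equivalences requires careful naturality of the $W_\%$-construction with respect to these identifications and compatibility of the various diagonal approximations.
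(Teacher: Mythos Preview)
Your proposal is correct and follows essentially the same route as the paper: both identify $\tilde M_f/\partial\tilde M_f\simeq\Sigma(\tilde P_+)$ and reduce the comparison to relating the algebraic mapping cone of $f^!$ with the (desuspended) cofiber of $f_+$. The paper makes the key step cleaner by directly invoking the degree-one splitting $f_+\circ f^!\simeq\text{id}_{\tilde Q_+}$ \cite[prop.~2.2]{Ranicki}, which is what actually identifies the homotopy fiber of $f_*$ with the cone of $f^!$; your phrase ``by $\pi$-equivariant $S$-duality (the very definition of Ranicki's umkehr $f^!$)'' is slightly imprecise here, since $f^!$ is the $S$-dual of $f^\xi$ rather than of $f_+$, so the identification you need really rests on that splitting rather than on duality alone.
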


\begin{proof}  The surgery obstruction of $f$ is the quadratic algebraic
Poincar\'e complex of dimension $d$ defined by the chains on the homotopy
cofiber of the stable $\pi$-equivariant umkehr map
\[
f^!\: \tilde Q_+ \to \tilde P_+\, .
\]
There is a weak equivalence of $\pi$-spaces $\tilde M_f/\partial \tilde M_f \simeq \Sigma (\tilde P_+)$. With respect to this
identification, an
unraveling of the construction shows that the surgery obstruction of the normal pair $(M_f,\partial M_f)$ is given by
the one-fold desuspension of the chains on the cofiber of the map
\[
f_+\: \tilde P_+\to \tilde Q_+\, .
\]
The result then follows immediately from the fact that $f_+ \circ f^!$ is stably equivariantly homotopic to the identity map of $Q_+$ \cite[prop.~2.2]{Ranicki}.
\end{proof}



\subsection{The function $t$} 
Fix $Q\in \Top_{/X}$ a Poincar\'e space of dimension $d$.
As outlined above, a normal map $f\: P \to Q$ over $X$  determines 
a (normal,Poincar\'e) pair $(M_f,P \amalg Q)$ of dimension $(d+1)$ over $X$.
The latter determines in an evident way an element of $\cal F_d(X,\xi) $ which is invariant under bordism of normal maps. We have
therefore defined a function
\begin{equation} \label{eqn:t}
t\: \cal N(Q,\xi) @>>> \cal F_d(X,\xi) \, .
\end{equation}
By Lemma \ref{lem:normal=quinn}, we infer that
the composition
\[
\cal N(Q,\xi) @> t >> \cal F_d(X,\xi) @> \sigma' >> L_d(\pi,w) 
\]
coincides with $\sigma$.

\begin{cor} Assume $t(f) = 0$, $\pi$ is trivial and Theorem \ref{bigthm:fund-thm} holds. 
If $d \ge 7$ or $d=5$, then  $f$ is normally cobordant to a homotopy equivalence.
\end{cor}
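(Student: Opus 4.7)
The plan is to reduce the corollary directly to Theorem \ref{bigthm:fund-thm} by exploiting the factorization $\sigma = \sigma' \circ t$ recorded in the display just before the corollary. The hypothesis $t(f) = 0 \in \cal F_d(X,\xi)$ together with the fact that $\sigma'$ is a group homomorphism immediately yields
\[
\sigma(f) \;=\; \sigma'(t(f)) \;=\; \sigma'(0) \;=\; 0 \in L_d(\pi,w)\, .
\]
Since $\pi$ is trivial and the dimension lies in the range $d\ge 7$ or $d = 5$, the hypotheses of Theorem \ref{bigthm:fund-thm} are met, and the assumption that this theorem holds gives the conclusion that $f$ is normally cobordant to a homotopy equivalence.

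The only non-trivial content of the argument is invoking the factorization $\sigma = \sigma' \circ t$ correctly. This factorization is an unravelling of Lemma \ref{lem:normal=quinn}: the surgery obstruction of the normal map $f\colon P \to Q$ coincides with the surgery obstruction $\sigma'(M_f, P\amalg Q)$ of the associated $(d{+}1)$-dimensional (normal, Poincar\'e) pair, and the latter depends only on the class $t(f) \in \cal F_d(X,\xi)$ by the definition of $\sigma'$ on the bordism group. Thus the vanishing hypothesis $t(f) = 0$ is \emph{a priori} stronger than the vanishing of $\sigma(f)$, which is all that is needed to feed into Theorem \ref{bigthm:fund-thm}. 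There is no genuine obstacle here; the corollary is essentially a formal consequence of two ingredients already in place (Lemma \ref{lem:normal=quinn} and the simply connected Fundamental Theorem being assumed as a hypothesis).
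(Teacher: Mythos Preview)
Your proof is correct and essentially identical to the paper's own argument: the paper also computes $\sigma(f) = \sigma'(t(f)) = 0$ via the factorization $\sigma = \sigma' \circ t$ and then invokes Theorem \ref{bigthm:fund-thm}. Your additional remarks unpacking Lemma \ref{lem:normal=quinn} are accurate but not needed, since the factorization is already displayed in the paper immediately before the corollary.
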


\begin{proof} We have $\sigma(f) = \sigma'(t(f)) =0$. Hence, 
$f$ is  normally cobordant to a homotopy equivalence by 
Theorem \ref{bigthm:fund-thm}.
\end{proof}

\begin{rem}
 If $\pi$ is non-trivial, the $f$ is normally cobordant to a  homotopy equivalence provided
that Theorem \ref{bigthm:fund-thm} holds for non-trivial $\pi$.
\end{rem}
 
\subsection{Lifting criteria}  We will show that the function
 $t$ of \eqref{eqn:t} is onto when
 $Q$  is a Poincar\'e space over $X$, provided that certain conditions are met.
 
 To avoid clutter,  we identify 
a map $\phi\: A\to B$  with  the associated pair $(M_\phi,B)$ in which $M_\phi$ is 
the mapping cylinder of $\phi$,  and by slight abuse of notation, we write $(B,A)$ in place of $(M_\phi,B)$.
In particular, a commutative diagram
\[
\xymatrix{
A \ar[d] \ar[r] & C \ar[d] \\
B \ar[r] & D 
}
\]
will be written as a map of ``pairs'' $(B,A) \to (D,C)$.

 Let $(Y,\partial Y)$ be a Poincar\'e pair over $X$ of dimension $d+1$
 with normal invariant $\beta\: (D^{d+1},S^d) \to (Y^\xi,(\partial Y)^\xi)$.
Suppose that
\[
f\: P \to \partial Y
\] is a normal map of Poincar\'e spaces over $X$. 

Let $\alpha_0 \: S^d \to P^\xi$ be the normal invariant of $P$. Then 
the restriction of $\beta$ to $S^d$ coincides with $f\circ \alpha_0$. Let $\alpha$
denote the homotopy class
\[
(D^{d+1},S^d) @>(\beta,\alpha_0)>> (Q^\xi,P^\xi) @>>> (X^\xi,P^\xi)\, .
\]
Then $(P,\alpha)$ represents an element of $\cal F_d(X,\xi)$.

Moreover,
\begin{equation} \label{eqn:norm-pd}
(\partial Y,P \amalg \partial Y) \quad \text{ \rm and } \quad \quad (Y,P)
\end{equation}
are  (normal, Poincar\'e) pairs over $X$.

\begin{prop}\label{prop:bordism-lift}  The (normal, Poincar\'e) pairs 
\eqref{eqn:norm-pd}
 represent the same element
of 
 $\Omega_{d+1}^{QP}(X,\xi)$. Equivalently, $t(f) \in \cal F_d(X,\xi)$ coincides with 
 the class defined by $(P,\alpha)$.
\end{prop}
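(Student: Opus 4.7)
The statement has two equivalent forms; I will prove the second, $t(f) = [(P,\alpha)]$ in $\cal F_d(X,\xi)$. Write $M_{f,Y} := Y\cup_{\partial Y} M_f$, so that the pair $(Y,P)$ of the statement is, in the shorthand, the (normal, Poincar\'e) pair $(M_{f,Y},P)$ of dimension $d{+}1$, whose total space is homotopy equivalent to $Y$. By the definition of $t$, the other pair $(\partial Y, P\amalg \partial Y)$ is just $(M_f, P\amalg \partial Y)$, which represents $t(f)$.

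First I would verify that $(M_{f,Y}, P)$ corresponds to $(P,\alpha)$ under the isomorphism $\Omega_{d+1}^{QP}(X,\xi)\cong \cal F_d(X,\xi)$ of Proposition \ref{prop:phi}. Unwinding that isomorphism, the normal invariant of $M_{f,Y}$ as a homotopy class $(D^{d+1},S^d)\to (M_{f,Y}^\xi, P^\xi)$ is assembled from $\beta$ on the $D^{d+1}$-factor (using the deformation retraction $M_{f,Y}\simeq Y$) together with $\alpha_0$ on the $S^d$-factor (using the inclusion $P\hookrightarrow M_f\subset M_{f,Y}$). The two pieces are compatible on $S^d$ precisely because $f$ is a normal map, giving $\beta|_{S^d} = f^\xi\circ \alpha_0$; this is exactly the construction of $\alpha$ from the proposition's hypotheses. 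Push-forward along $M_{f,Y}\to X$ then converts the target pair $(Y^\xi, P^\xi)$ to $(X^\xi, P^\xi)$, producing $\alpha$.

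The proposition therefore reduces to establishing the bordism
\[
[(M_f, P\amalg \partial Y)] \;=\; [(M_{f,Y}, P)] \qquad \text{in } \Omega_{d+1}^{QP}(X,\xi).
\]
The geometric content is that $(M_{f,Y}, P)$ is obtained from $(M_f, P\amalg \partial Y)$ by filling in the $\partial Y$-component of the Poincar\'e boundary with the Poincar\'e pair $(Y,\partial Y)$. I would realize this as an explicit (normal, Poincar\'e) cobordism of dimension $d{+}2$ whose Poincar\'e boundary decomposes as $M_f\cup_{P\amalg \partial Y} K \cup_P M_{f,Y}^{-}$, where the Poincar\'e cobordism $K := Y\amalg (P\times I)$ of dimension $d{+}1$ interpolates between the Poincar\'e boundaries, with $\partial K = (\partial Y\amalg P) \amalg P$.

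The main technical obstacle will be to exhibit the required normal structure on the $(d{+}2)$-dimensional cobordism and to check that its boundary, after gluing through $K$, is Poincar\'e of dimension $d{+}1$. A naive double of a (normal, Poincar\'e) pair along its Poincar\'e boundary need not be Poincar\'e, and here the role of the $Y$-summand of $K$ is precisely to repair the duality defect that the normal map $f$ would otherwise introduce: after absorbing the $\partial Y$-gluings, the resulting closed space is the homotopy pushout $Y\cup^h_P Y^-$, whose Poincar\'e structure follows from that of $(Y,\partial Y)$ using the normal-map compatibility $\beta|_{S^d}=f^\xi\circ \alpha_0$. The compatible normal structure on the cobordism itself is built by extending $\beta$ along a collar and splicing with the mapping cylinder data of $M_f$.
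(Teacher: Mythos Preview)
Your first two paragraphs are correct and in fact more explicit than the paper about why $(M_{f,Y},P)$ corresponds to $(P,\alpha)$ under the isomorphism $\Omega_{d+1}^{QP}(X,\xi)\cong \cal F_d(X,\xi)$. You also correctly isolate the Poincar\'e side cobordism $K=Y\amalg(P\times I)$ between the Poincar\'e boundaries $P\amalg\partial Y$ and $P$.

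Where you go off track is in the last paragraph. A bordism in $\Omega_{d+1}^{QP}(X,\xi)$ between (normal, Poincar\'e) pairs $(Q_0,A_0)$ and $(Q_1,A_1)$ consists of a $(d{+}2)$-dimensional normal space $W$, a Poincar\'e cobordism $K$ from $A_0$ to $A_1$, and a normal structure on the pair $(W,\,Q_0\cup_{A_0}K\cup_{A_1}Q_1)$. There is \emph{no} requirement that the assembled boundary $Q_0\cup K\cup Q_1$ be Poincar\'e---only that $K$ be, and in your case that is immediate since $(Y,\partial Y)$ and $(P\times I,P\times\partial I)$ are Poincar\'e pairs. Your ``main technical obstacle'' and the discussion of $Y\cup_P^h Y^-$ are therefore addressing a condition that is not part of the bordism relation. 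You also never name the total space $W$ of your single-step bordism.

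The paper sidesteps all of this by splitting the bordism into two elementary moves. First it replaces $(\partial Y,\,P\amalg\partial Y)$ by $(Y,\,P\amalg\partial Y)$: the mapping cylinder of the inclusion $\partial Y\hookrightarrow Y$ is the bordism, with the trivial cylinder $(P\amalg\partial Y)\times I$ as the Poincar\'e side. Second it passes from $(Y,\,P\amalg\partial Y)$ to $(Y,P)$: the total normal space remains (homotopy equivalent to) $Y$, while the Poincar\'e side is precisely your $K=(P\times I)\amalg Y$, the point being that $(Y,\partial Y)$ is a Poincar\'e null-cobordism of $\partial Y$. In each step the normal invariant is supplied directly by $\beta$, so no further verification is needed. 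Once you drop the spurious Poincar\'e condition on the full boundary, your one-step version collapses to the composite of these two, with $W$ the mapping cylinder of $M_f\cup K\to M_{f,Y}$.
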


\begin{proof} The (normal, Poincar\'e) pairs $(\partial Y,P \amalg \partial Y)$ and $(Y,P \amalg \partial Y)$ 
represent the same element of  $\Omega_{d+1}^{QP}(X,\xi)$, where the mapping cylinder of the inclusion $\partial Y\to Y$
defines a bordism. Consequently,
for the first part it suffices to show that $(Y,P \amalg \partial Y)$ and $(Y,P)$ are bordant.

Observe that $Y$ defines a Poincar\'e bordism over $X$ from $\partial Y$ to the empty space.
This bordism extends extends to the (normal, Poincar\'e) bordism
\[
(Y,(P \times [0,1]) \, \amalg\,  Y)
\]  
from
$
(Y,(P \times \{0\})\,  \amalg \, \partial Y)
$
to $(Y,P \times \{1\})$. 

For the second part, we only need to observe that
that $(Y,P \amalg \partial Y)$ corresponds to  $t(f)$ and $(Y,P)$ corresponds to $(P,\alpha)$ with respect to the isomorphism
$\Omega_{d+1}^{QP}(X,\xi) \cong \cal F_d(X,\xi)$.
\end{proof}

\begin{prop} \label{better-reps} Assume $d \ge 7$ or $d=5$.
Then every element of $\cal F_d(X,\xi) $
admits a representative $(P,\alpha)$,  in which the map
$P \to X$ is
$\lfloor d/2\rfloor$-connected.
\end{prop}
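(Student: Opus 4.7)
The plan is to argue by induction on the connectivity of the structure map $P \to X$, using Poincar\'e surgery on $P$ to raise it one step at a time. Given any representative $(P,\alpha)$ of the class in question whose structure map is $k$-connected with $k < \lfloor d/2\rfloor$, it suffices to produce a bordant representative $(P',\alpha')$ for which the structure map is $(k{+}1)$-connected, and iterate finitely many times until the desired connectivity is reached.

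For the induction step, I would choose a class in the relative homotopy group $\pi_{k+1}(X,P)$, represented by a map of pairs $f\:(D^{k+1},S^k) \to (X,P)$. The restriction $f|_{S^k}\: S^k \to P$, equipped with the null-homotopy of the composition $(P\to X)\circ f|_{S^k}$ supplied by $f$, is exactly the data needed to perform Poincar\'e surgery on $P$. The hypothesis $k+1 \le \lfloor d/2\rfloor$ together with $d\ge 7$ or $d=5$ keeps us within the stable range covered by the author's Poincar\'e embedding theorems \cite{Klein_haef, Klein_compression, Klein_sphere} (cf.~Theorem \ref{my_thm}), which promote $f|_{S^k}$ to a Poincar\'e embedding of $S^k$ into $P$. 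The normal spherical fibration of this embedding is, stably, the restriction of $-\xi$ to $S^k$, which is null-homotopic thanks to the chosen null-homotopy in $X$. Since $k < d-k$ puts us in the stable range for spherical fibrations over $S^k$, the normal fibration is already trivial unstably, so the embedding extends to a Poincar\'e tubular neighborhood $S^k \times D^{d-k} \hookrightarrow P$.

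Performing Poincar\'e surgery along this handle then produces a Poincar\'e space $P'$ of dimension $d$ and a Poincar\'e bordism $(W, P \amalg P')$, with $W$ obtained from $P\times I$ by attaching a $(k{+}1)$-handle. The null-homotopy built into $f$ extends the structure map to $W \to X$, and the normal invariant $\alpha$ extends over $W$ to yield a bordism in $\cal F_d(X,\xi)$ from $(P,\alpha)$ to $(P',\alpha')$. A standard Mayer--Vietoris and van Kampen analysis shows that the surgery kills the chosen class in $\pi_{k+1}(X,P)$ while creating only classes in degrees $\ge d-k-1 > k$. Successively applying this construction to a set of generators of the finitely generated $\Bbb Z[\pi_1(X)]$-module $\pi_{k+1}(X,P)$ therefore raises the connectivity of the structure map by one.

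I expect the main technical obstacle is producing the required Poincar\'e embedding of $S^k$ in $P$ together with a suitable normal fibration trivialization at the borderline codimension $d-k$; this is precisely the content of the author's Poincar\'e embedding theorems and is what forces the dimension hypothesis. Given those results in hand, the remainder is the standard surgery bookkeeping familiar from the smooth setting.
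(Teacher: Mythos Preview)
Your proposal is correct and follows essentially the same approach as the paper. The paper's proof is simply a citation of Theorem~\ref{below_middle} (surgery below the middle dimension), whose own proof in \S\ref{sec:toolbox} is precisely the inductive handle-by-handle argument you sketch: Browder's proof of \cite[thm.~IV.1.13]{Browder} with the Whitney embedding theorem replaced by the Poincar\'e embedding result Corollary~\ref{cor:easy}(i), and with the stable-to-unstable trivialization of the normal fibration handled via the obstruction $\cal O\in \pi_k(G/G_{\ell+1})$ of Proposition~\ref{prop:obstruction} and Corollary~\ref{cor:obstruction_vanishes}---exactly the mechanism you invoke when you say that $k<d-k$ puts the normal fibration in the stable range.
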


\begin{proof} This is a direct consequence of Theorem \ref{below_middle} below, which says that surgery below the middle dimension
can be performed.
\end{proof}

We assume for the rest of this section that $X$ has the homotopy type of a finite CW complex of dimension $\le s$.
Suppose that $(P,\alpha)$ is as in Proposition \ref{better-reps}. If $d \ge 2s+1$, then the structure map $P\to X$
is $s$-connected, so it admits
a section up to homotopy $v\: X\to P$.

\begin{lem} \label{P-decomp} Assume $d\ge \max(5,2s+1)$.
 Then  $v\: X\to P$ underlies a Poincar\'e embedding. In particular,  there is a  coCartesian square
 \[
\xymatrix{
\partial \bar X \ar[r] \ar[d] & P_0\ar[d]\\
 \bar X \ar[r]_v & P 
 }
 \]
 in which  $(\bar X,\partial \bar X)$ is a $(d-s-1)$-connected Poincar\'e pair of dimension $d$,
and there is a factorization of $v$
\[
X@>>> \bar X \to P
\]
in which $X\to \bar X$ is a homotopy equivalence.
 \end{lem}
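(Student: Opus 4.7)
The plan is to apply the Poincar\'e embedding theorem of \cite[thm.~A]{Klein_haef} (cited as Theorem \ref{my_thm} in the paper) to the homotopy class $v\: X \to P$. The source $X$ is homotopy equivalent to a CW complex of dimension $\le s$, the target $P$ is a Poincar\'e space of dimension $d$, and the hypothesis $d \ge 2s+1$ places the map $v$ squarely in the metastable range, which is the setting in which the Poincar\'e embedding theorem produces an embedding without any further connectivity assumption on $v$ itself. The same Poincar\'e embedding machinery was already used in the proof of Theorem \ref{bigthm:sigma-onto} for a $2$-skeleton $K \to Q$, producing a $(d-3)$-connected boundary; here we are in the analogous situation with $s$ in place of $2$.

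First I would verify the hypotheses: $X$ is homotopy finite of dimension $\le s$, $P$ is a Poincar\'e space of dimension $d$, and $d - s \ge s + 1$, which is the codimension condition needed for the embedding theorem. Second, I would invoke the theorem to obtain a factorization $X \xrightarrow{\simeq} \bar X \to P$ together with a complement $P_0$ and a coCartesian square
\[
\xymatrix{
\partial \bar X \ar[r] \ar[d] & P_0\ar[d]\\
 \bar X \ar[r] & P
 }
\]
in which $(\bar X,\partial \bar X)$ and $(P_0,\partial \bar X)$ are Poincar\'e pairs of dimension $d$ whose fundamental classes match that of $P$ across the pushout.

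Third, I would verify the connectivity statement. The Poincar\'e embedding construction exhibits $\partial \bar X \to \bar X$ as a spherical fibration (up to homotopy) whose fibers are $(d-s-1)$-spheres: the "link" of the embedded object $X$ has codimension $d - s$ in $P$. Since $(d-s-1)$-spheres are $(d-s-2)$-connected, the map $\partial \bar X \to \bar X$ is $(d-s-1)$-connected, which yields the claimed connectivity of the pair $(\bar X, \partial \bar X)$.

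The main obstacle, such as it is, lies in confirming that the form of the Poincar\'e embedding theorem invoked delivers the boundary connectivity in the stated form; this is a standard output of Klein's metastable embedding results, and the pattern matches exactly the $s=2$ instance already carried out earlier in the paper. The dimension hypothesis $d \ge \max(5, 2s+1)$ is used precisely to meet the hypotheses of the embedding theorem and to ensure that $v$ exists in the first place.
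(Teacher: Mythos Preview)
Your approach is essentially the paper's: invoke Theorem~\ref{my_thm} (equivalently, Corollary~\ref{cor:easy}(i)) with $k=s$, which applies verbatim under the hypothesis $d\ge\max(5,2s+1)$. The paper's proof is a one-line citation of Theorem~\ref{my_thm}.

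One small correction to your third step: the map $\partial\bar X\to\bar X$ is \emph{not} in general a spherical fibration, since $X$ is merely a CW complex of dimension $\le s$ and need not be a Poincar\'e space (the spherical-fibration statement holds only in that special case; cf.\ the remark in \S\ref{sec:thick}). The $(d-s-1)$-connectivity of $\partial\bar X\to\bar X$ is instead part of the \emph{definition} of a codimension $d-s$ Poincar\'e embedding (see \S\ref{subsec:embedding}), and Theorem~\ref{my_thm} produces exactly such an embedding. So the conclusion stands, but the justification should cite the definition rather than a fiber computation.
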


\begin{proof} The Poincar\'e embedding of $v$ exists by Theorem \ref{my_thm} below.
\end{proof}

\begin{cor} \label{cor:P-decomp}  Assume that $d \ge  \max(5,2s+1)$ with $(P,\alpha)$ as above.
Then there is a Poincar\'e normal map $f\: P \to Q$ over $X$  such that the mapping cylinder
of $Q\to X$ defines
 a Poincar\'e pair of dimension $d+1$.
\end{cor}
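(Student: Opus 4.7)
The plan is to exploit the Poincar\'e embedding from Lemma \ref{P-decomp} to produce $Q$ as the fiberwise double of $\bar X$ along $\partial \bar X$, and to build a degree-one map $f\: P\to Q$ via the homotopy equivalence $\bar X\simeq X$. First I would invoke Lemma \ref{P-decomp} applied to the section $v\:X\to P$, yielding the pushout $P = \bar X\cup_{\partial \bar X} P_0$ together with a homotopy equivalence $r\:\bar X\to X$ and a $(d{-}s{-}1)$-connected Poincar\'e pair $(\bar X,\partial \bar X)$ of dimension $d$.

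Next I would set $Q := \bar X\cup_{\partial \bar X} \bar X$ (the double of $\bar X$ along its boundary), with structure map to $X$ given by $r$ on each copy. Doubling the Poincar\'e pair $(\bar X,\partial \bar X)$ produces a closed Poincar\'e space of dimension $d$. The mapping cylinder pair $(M_{Q\to X}, Q)$ identifies up to weak equivalence with the standard product pair $(\bar X\times I,\partial(\bar X\times I))$, which is a Poincar\'e pair of dimension $d+1$ by applying the product construction to the Poincar\'e pair $(\bar X,\partial \bar X)$ and the interval $(I,\partial I)$.

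To build $f$ I would use the identity on $\bar X\subset P$ landing in the first copy $\bar X_0\subset Q$, and a homotopy-rigidified form of the composition $v\circ r|_{P_0}\:P_0\to X\to \bar X$ landing in the second copy $\bar X_1$. Because $v\circ r|_{\partial \bar X}$ is homotopic to $\text{id}_{\partial \bar X}$, the homotopy extension property for $\partial \bar X\hookrightarrow P_0$ allows one to choose a representative $\tilde g$ whose restriction to $\partial \bar X$ is strictly the inclusion, so that $f$ is well-defined as a map in $\Top_{/X}$. Compatibility of Spivak fibrations is automatic since $f$ is a morphism over $X$, and one takes $\alpha_Q := f^\xi\alpha_P$.

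The main obstacle will be verifying the degree-one condition $f_\ast[P]=[Q]$. Applying Mayer--Vietoris to both pushouts, the $\bar X$-contribution is $[\bar X_0,\partial \bar X]$ tautologically. For the $P_0$-contribution, in the long exact sequence of the pair $(X,\partial \bar X)$ (realized via the mapping cylinder of $r|_{\partial \bar X}$) the vanishing of $H_d(X;L)$ and $H_{d-1}(X;L)$---which follows from $\dim X\le s$ together with $d\ge 2s+1$---makes the connecting homomorphism $H_d(X,\partial \bar X;L)\to H_{d-1}(\partial \bar X;L)\cong \Bbb Z$ an isomorphism. Since $r|_{P_0}$ preserves boundary classes, it sends $[P_0,\partial \bar X]$ to the generator of $H_d(X,\partial \bar X;L)$; composing with the relative homology isomorphism induced by $v$ yields $\tilde g_\ast[P_0,\partial \bar X]=[\bar X_1,\partial \bar X]$, so $f_\ast[P]=[Q]$ and $f$ is a Poincar\'e normal map.
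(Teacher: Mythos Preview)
Your approach is essentially the same as the paper's. The paper defines $Q = S_X\partial\bar X$, the unreduced fiberwise suspension, which is precisely your double $\bar X\cup_{\partial\bar X}\bar X$ once one uses the homotopy equivalence $\bar X\simeq X$ (the fiberwise suspension is the double mapping cylinder $X\cup(\partial\bar X\times I)\cup X$). The paper's map $f$ is $\text{id}_X\cup\text{id}_{\partial\bar X\times I}\cup g$ with $g\:P_0\to X$ the structure map, which is exactly your construction after identifying the second copy $\bar X_1$ with $X$; the paper then simply asserts that $f$ is a normal map without spelling out the degree-one check, whereas you supply the Mayer--Vietoris argument.

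One notational slip: you write $r|_{P_0}$ with $r\:\bar X\to X$, but $P_0$ is not a subspace of $\bar X$. What you mean (and what the types $P_0\to X\to\bar X$ indicate) is the restriction to $P_0$ of the structure map $P\to X$, followed by the homotopy inverse of $r$. With that reading your homotopy-extension step and the degree-one verification go through as stated.
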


\begin{proof} 
Let $Q = S_{X} {\partial \bar X}$, where $\partial \bar X$ is as in Lemma \ref{P-decomp}. The mapping
cylinder of  the map
$Q\to X$ defines a Poincar\'e pair over $X$ of dimension $d+1$.
Let $f\: P \to Q$ be the map over $X$ given by
\[
P = X \cup (\partial\bar  X \times I) \cup P_0 @> \text{id}_{X} \cup \text{id}_{\partial \bar  X \times I}
 \cup g >>  X \cup  ( \partial\bar X \times I) \cup X = Q\, ,
\]
where $g\:P_0\to X$ is the composition $P_0 \to P  \to X$.
Then $f$ is compatible with $\xi$, so it underlies a Poincar\'e normal map.
\end{proof}

\begin{rem} \label{rem:uniqueQ} The map $Q \to X$ is the unique Poincar\'e $(d+1)$-thickening
of $X$ up to concordance with Spivak fibration $\xi$ 
by Corollary \ref{cor:thickening_stable_range}  below.
 \end{rem}

\begin{prop} \label{prop:t-onto} Assume $d \ge \max(5,2s+1)$ and let $Q\to X$ be as in Corollary \ref{cor:P-decomp}.
Then the function  $t\: \cal N(Q,\xi) \to \cal F_d(X,\xi) $ is onto.
\end{prop}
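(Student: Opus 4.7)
The plan is to realize an arbitrary class of $\cal F_d(X,\xi)$ as $t(f)$ by combining the representative-choice of Proposition \ref{better-reps}, the Poincar\'e embedding of Lemma \ref{P-decomp}, the construction of $f$ from Corollary \ref{cor:P-decomp}, and the formula for $t$ given by Proposition \ref{prop:bordism-lift}.

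First, I would apply Proposition \ref{better-reps} to represent the given class by a pair $(P,\alpha)$ in which the structure map $P \to X$ is at least $\lfloor d/2\rfloor$-connected; under $d \ge 2s+1$ this makes it $s$-connected, and since $X$ has the homotopy type of a CW complex of dimension $\le s$, there exists a homotopy section $v\:X \to P$. Lemma \ref{P-decomp} then realizes $v$ as a Poincar\'e embedding, yielding the decomposition $P = \bar X\cup_{\partial \bar X}P_0'$ with $\bar X\simeq X$.

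Second, Corollary \ref{cor:P-decomp} produces a Poincar\'e normal map $\tilde f\: P \to \tilde Q$ with $\tilde Q = S_X(\partial \bar X)$. By Remark \ref{rem:uniqueQ}, any two Poincar\'e $(d+1)$-thickenings of $X$ with Spivak fibration $\xi$ are concordant, so $\tilde Q$ can be identified with the given $Q$ compatibly with the structure maps and Spivak data, producing the desired $f\:P\to Q$.

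Third, I would compute $t(f)$ by applying Proposition \ref{prop:bordism-lift} with $Y$ the mapping cylinder of $Q \to X$: then $(Y,\partial Y) = (Y,Q)$ is a Poincar\'e pair of dimension $d+1$ over $X$ with $Y \simeq X$. The proposition identifies $t(f)$ with a class $[P,\alpha']\in \cal F_d(X,\xi)$, where $\alpha'$ is assembled from the normal invariant of $Y$ and the Spivak invariant of $P$, pushed down along $Y \to X$.

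The main obstacle is to verify that $[P,\alpha']$ equals the original $[P,\alpha]$ in $\cal F_d(X,\xi)$. A priori these pair maps $(D^{d+1},S^d)\to (X^\xi,P^\xi)$ extend the same Spivak class on $S^d$ and so differ by an element of $\pi_{d+1}(X^\xi)$. I would handle this by observing that the glued space $M_f\cup_Q Y$ is the double mapping cylinder of $P \to Q \to X$, which deformation retracts over $X$ onto the mapping cylinder of $P \to X$. This retraction exhibits $(M_f\cup_Q Y,P)$ and the canonical mapping-cylinder representative of $(P,\alpha)$ as bordant (normal, Poincar\'e) pairs over $X$ in $\Omega^{QP}_{d+1}(X,\xi)$, yielding $t(f) = [P,\alpha]$ as required.
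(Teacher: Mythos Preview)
Your approach is the same as the paper's: the paper's proof is three sentences that amount to ``represent $x$ by $(P,\alpha)$, use the proof of Corollary~\ref{cor:P-decomp} (together with uniqueness of the thickening $Q\to X$) to produce $f\:P\to Q$, then invoke Proposition~\ref{prop:bordism-lift} to conclude $t(f)=x$.'' You unwind these steps explicitly through Proposition~\ref{better-reps}, Lemma~\ref{P-decomp}, Corollary~\ref{cor:P-decomp}, and Remark~\ref{rem:uniqueQ}, and then apply Proposition~\ref{prop:bordism-lift}; this is exactly the intended route.

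You go beyond the paper in flagging the question of why the class $[P,\alpha']$ output by Proposition~\ref{prop:bordism-lift} coincides with the original $[P,\alpha]$---a point the paper leaves tacit. However, your proposed resolution is not airtight as stated. The deformation retraction $M_f\cup_Q Y\simeq M_{P\to X}$ over $X$ identifies the underlying pairs, but a (normal, Poincar\'e) pair carries a \emph{specified} normal invariant, and the retraction transports the glued normal invariant on $(M_f\cup_Q Y,P)$ to precisely $\alpha'$, not a priori to the given $\alpha$. So the retraction argument, as you have written it, is circular: it shows $(M_f\cup_Q Y,P,\alpha')\simeq (M_{P\to X},P,\alpha')$, not that this is bordant to $(M_{P\to X},P,\alpha)$. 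To close this, one has to argue either that the construction of $f$ can be arranged so that $\alpha'=\alpha$ (tracking the normal invariant through the decomposition of $P$), or that the difference $\alpha-\alpha'\in\pi_{d+1}(X^\xi)$ lies in the image of $\Omega^P_{d+1}(X,\xi)$ and hence dies in $\cal F_d(X,\xi)$. The paper itself does not spell this out.
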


\begin{proof}
Let $(P,\alpha)$  represent an 
element $x\in  \cal F_d(X,\xi) $. Then by the proof of Corollary \ref{cor:P-decomp},  there is a normal map
$f\: P \to Q$ over $X$ (note: we have used here the uniqueness of the thickening $Q\to X$). 
By Proposition \ref{prop:bordism-lift}, we infer that $t(f)=x$. \qedhere

\begin{cor} \label{cor:t-onto} Assume $d \ge 5$ and that $X$ has the homotopy type of a CW complex of dimension $\le 2$. Then
 $t\: \cal N(Q,\xi) \to \cal F_d(X,\xi) $ is onto.
 \end{cor}


  \end{proof}

\section{Proof of Theorem \ref{bigthm:stronger-exactness} and Addendum  \ref{bigadd:stronger-exactness}} \label{sec:exact-sequence}

The proof of Theorem \ref{bigthm:stronger-exactness} and Addendum  \ref{bigadd:stronger-exactness}
will rely on Theorems \ref{bigthm:fund-thm} and \ref{bigthm:sigma-onto}.
The first step amounts to a kind of ``$\pi$-$\pi$ theorem'' for the abelian group valued functor on $\Top_{/X}$ defined by
$Y \mapsto \cal F_d(Y;\xi)$.

Fix a $2$-skeleton $Y \to X$, which we can take to be an inclusion.
We will show that the map
\[
\cal F_d(Y;\xi) \to \cal F_d(X,\xi)
\]
is an isomorphism if $d \ge 5$. 

Let \[
\cal F_d(X,Y,\xi)
\]
be the bordism group generated by $(P,\partial P,\alpha)$, in which 
$(P,\partial P)$ is a Poincar\'e pair of dimension $d$ over $X$ and $\alpha$ is a homotopy class of a diagrams
of the form
\[
\xymatrix{
S^d \ar[r] \ar[d] & P^\xi/(\partial P)^\xi \ar[d] \\
D^{d+1} \ar[r]  & X^\xi/Y^\xi
}
\]
in which the top horizontal arrow is a normal invariant.
Then one has a long exact sequence of abelian groups
\[
\cdots \to \cal F_d(X,Y,\xi) \to \Omega_d^{P}(X,Y,\xi) \to \pi_{d}(X^\xi/Y^\xi) \to \cal F_{d-1}(X,Y,\xi) \to \cdots
\]
in which $\Omega_d^{P}(X,Y,\xi)$ is the bordism group of Poincar\'e pairs over $(X,Y)$.
One also has a long exact sequence
\[
\cdots \to \cal F_{d+1}(X,Y,\xi)@> \partial >>  \cal F_d(Y,\xi) \to  \cal F_d(X,\xi)\to \cal F_d(X,Y,\xi) @> \partial >> \cdots \, .
\]

\begin{prop}\label{prop:pi-pi} If $d\ge 5$, then $\cal F_d(X,Y,\xi)$ is trivial.
\end{prop}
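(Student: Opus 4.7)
The plan is to follow the strategy of Wall's $\pi$-$\pi$ theorem, adapted to the Poincar\'e category. By a relative version of Proposition \ref{prop:phi} (proved by the same forgetting-data argument used there), $\cal F_d(X, Y, \xi)$ is isomorphic to $\Omega_{d+1}^{QP}(X, Y, \xi)$, the bordism group of relative $(d{+}1)$-dimensional (normal, Poincar\'e) pairs over $(X, Y)$. A representative may be written as a triple $(W; \partial_0 W, \partial_1 W)$ in which $\partial_0 W$ is the Poincar\'e boundary mapping to $X$, $\partial_1 W$ is the portion mapping to $Y$, and the whole carries the normal invariant for $\xi$. The task is to exhibit an explicit nullbordism of such a representative.

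First I would apply Theorem \ref{below_middle} (surgery below the middle dimension) to replace the representative by one in which $W \to X$ is $\lceil (d{+}1)/2 \rceil$-connected. Because $Y$ is a $2$-skeleton of $X$ and $d \ge 5$, the same Poincar\'e surgery procedure can be arranged to simplify $\partial_1 W \to Y$ simultaneously, so that the only nontrivial relative homology is concentrated in the middle dimension. The associated kernel chain complex then inherits a quadratic Poincar\'e structure in the sense of Ranicki, as in Section \ref{sec:Quinn}.

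The underlying algebraic surgery obstruction lives in a relative $L$-group whose source and target fundamental groups are both $\pi = \pi_1(Y) = \pi_1(X)$; this relative $L$-group vanishes algebraically. The final step is to realize this algebraic vanishing geometrically, via Wall realization in the Poincar\'e setting (Theorem \ref{bigthm:sigma-onto} together with the extensions appearing in Section \ref{sec:realization}): attaching a realization bordism to $W$ through a Poincar\'e embedding \cite{Klein_haef} cancels the middle-dimensional kernel class and produces the desired nullbordism.

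The main obstacle is the middle-dimensional cancellation step, since the Poincar\'e setting lacks the Whitney trick; Poincar\'e embeddings \cite{Klein_haef, Klein_compression} must play the role of geometric handle cancellation, and verifying that the result of gluing the realization bordism actually remains (normal, Poincar\'e) requires careful bookkeeping of normal invariants and Spivak fibrations. Additionally, the odd-dimensional case is delicate because Theorem \ref{bigthm:sigma-onto} as stated applies only for even $d$; to cover odd $d \ge 7$, I would either invoke the extended Wall realization developed in Section \ref{sec:realization} or reduce to the even-dimensional case by a product-with-$S^1$ stabilization argument.
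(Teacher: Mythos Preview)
Your proposal has a genuine gap and is substantially more complicated than what is needed.

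The step ``realize this algebraic vanishing geometrically'' is essentially begging the question. Knowing that the relative $L$-group $L_*(\pi\to\pi)$ vanishes tells you only that the \emph{algebraic} obstruction is zero; turning that into an actual Poincar\'e nullbordism of the (normal, Poincar\'e) pair is precisely the content of a Poincar\'e $\pi$-$\pi$ theorem, which is not available as an input here. Theorem \ref{bigthm:sigma-onto} gives surjectivity of $\sigma$, not triviality of its fiber, so it does not do the cancellation you need; and the Fundamental Theorem (Theorem \ref{bigthm:fund-thm}) is stated only for trivial $\pi$, whereas Proposition \ref{prop:pi-pi} is asserted (and used) for arbitrary $\pi$. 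Your odd-dimensional workaround via $\times S^1$ compounds the problem, since it changes $\pi$. In short, your middle-dimensional cancellation step invokes exactly the sort of result this proposition is meant to help establish, so the argument is circular in the paper's logical structure.

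The paper's proof avoids all of this by never attempting to kill the kernel. Instead it shows directly that the map $\cal F_d(Y,Y,\xi)\to \cal F_d(X,Y,\xi)$ is surjective, which suffices since the source is trivially zero. Given $(P,\partial P,\alpha)$, one does surgery below the middle dimension only to make $P\to X$ $2$-connected; then the $2$-skeleton $Y\to X$ factors through $P$, and since $\dim Y\le 2$ and $d\ge 5$ the map $Y\to P$ underlies a Poincar\'e embedding by Corollary \ref{cor:easy}. Writing $P\simeq \bar Y\cup_{\partial\bar Y}P_0$, the complement $P_0$ furnishes an explicit Poincar\'e cobordism from $(P,\partial P)$ to $(\bar Y,\partial\bar Y)$ inside $P\times I$, and $(\bar Y,\partial\bar Y)$ visibly lives over $(Y,Y)$. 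No $L$-theory, no Wall realization, and no restriction on $\pi$ are required.
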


\begin{rem} We note that Proposition \ref{prop:pi-pi} does not require  $X$ to be $1$-connected.
\end{rem}

\begin{proof}[Proof of  Proposition \ref{prop:pi-pi}]
It suffices  to show that the homomorphism 
\[
\cal F_d(Y,Y,\xi) \to \cal F_d(X,Y,\xi)
\]
is surjective, since
$\cal F_d(Y,Y,\xi)$ is trivial.
Let $(P,\partial P,\alpha)$ represent an element of $\cal F_d(X,Y,\xi)$. By Proposition \ref{better-reps} (or Theorem \ref{below_middle} below), we may 
assume that the map $P \to X$ is $2$-connected. Then by elementary obstruction theory, the map $Y\to X$ factors as
\[
Y  \to P \to X
\]
where $Y \to P$ is $1$-connected.
By Corollary \ref{cor:easy} below, the map $Y \to P$ underlies  a Poincar\'e embedding. Hence, there is a Poincar\'e pair $(\bar Y,\partial \bar Y)$ of dimension $d$, 
a homotopy equivalence $Y @> \simeq >>\bar Y$, and a weak equivalence
\[
(\bar Y \cup_{\partial \bar Y} P_0,\partial P) \simeq (P,\partial P)
\]
whose restriction to $\partial P$ is the identity and whose restriction to $Y$ is the given map $Y\to P$ (cf.~Fig.~\!\ref{fig:pd-embed-Y}). Without loss in generality,
we identify $P$ with $\bar Y \cup_{\partial \bar Y} P_0$. In particular,
there is an inclusion $\iota\: P_0\to P$.
Let $c\: P_0 \to I$ be a continuous function satisfying $\partial \bar Y = c^{-1}(0)$ and $\partial P = c^{-1}(1)$. Then the pair
\[
(T,\partial_+ T)  := (P\times I,P_0)
\]
arising from the inclusion $(c,\iota) \:P_0 \to P\times I$  is a cobordism of Poincar\'e pairs
from  $(\bar Y,\partial \bar Y)$ to $(P,\partial P)$ (cf.~Fig.~\!\ref{fig:cobordism}).
\begin{figure}
\centering
\begin{minipage}[b][4cm][s]{.45\textwidth}
\centering
\vfill
\begin{tikzpicture}[scale=.55]
     \fill[white!60!gray!60] (0,0) ellipse (3 and 2);
    \draw (0,0) ellipse (3 and 2);
       \fill[white] (0,0) circle (1);
           \draw (0,0) circle (1);
    \node at (0,0) {{\tiny $\bar Y$}};
    \node at (-.85,.9) {\scalebox{.5}{$\partial \bar Y$}};
       \node at (1.75,0) {\tiny{$P_0$}};
             \node at (2.5,1.6) {\scalebox{.6}{$\partial P$}};
\end{tikzpicture}
\vfill
\caption{\tiny Poincar\'e embedding of $(\bar Y,\partial \bar Y)$ in $(P,\partial P)$.}\label{fig:pd-embed-Y}
\vspace{\baselineskip}
\end{minipage}\qquad
\begin{minipage}[b][5cm][s]{.45\textwidth}
\centering
\vfill
 \vspace*{-2cm}
\begin{tikzpicture}[scale=.75]
    \draw (0,0) --(5,0);
    \draw (0,2) -- (5,2);

    \draw (5,1) ellipse (0.6 and 1);

    \draw[dashed] (0,1) ellipse (0.6 and 1);
      \draw (0,2) arc  (90:270: 0.6 and 1);

    \draw (5,0) arc (270:90:0.6 and 1);
    \draw[dashed] (5,2) arc (90:270: 0.6 and 1);

    \draw[->] (3.2,2.35) -- (5,2.35);
    \draw[->] (1.9,2.35) -- (0,2.35);
    
\node at (0,1) {{\tiny $\bar Y$}};
\node at (5,1) {{\tiny $P$}};
\node at (2.5,2.3) {{\tiny $P_0 $}};
\node at (2.5,1) {{\tiny $P \times I$}};
\node at (-0.85,1.5) {{\tiny $\partial \bar Y$}};
\node at (5.95,1.5) {{\tiny $\partial P$}};
    \end{tikzpicture}
    \vfill
 \vspace*{-3cm}
    \caption{\tiny Schematic of the Poincar\'e cobordism $(T,T_+)$.}   \label{fig:cobordism}
     \vspace*{.5cm}
\end{minipage}
\end{figure}
Note that the structure  map $(\bar Y,\partial \bar Y) \to (X,Y)$ factors
through $(Y,Y)$. The normal invariant $(\bar Y,\partial \bar Y)$
is given by the composition
\[
S^d @>>> P^\xi/(\partial P)^\xi @>>>  \bar Y^\xi/(\partial Y)^\xi\, ,
\]
 where first map is the normal invariant of $(P,\partial P)$ and 
 the second map is given by collapsing $P_0^\xi$ to a point. Then the Poincar\'e pair
 $(\bar Y,\partial \bar Y)$ over $(Y,Y)$ together with its normal invariant defines a lift of the equivalence class of $(P,\partial P,\alpha)$ to $\cal F_d(Y,Y,\xi)$.
\end{proof}

\begin{cor} \label{cor:pi-pi} Assume $d \ge 5$.
Then $\cal F_d(Y,\xi) \to \cal F_d(X,\xi)$ is an isomorphism.
\end{cor}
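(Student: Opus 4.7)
The plan is to simply feed Proposition \ref{prop:pi-pi} into the long exact sequence of the pair $(X,Y)$ that was displayed just before that proposition. Recall that the pair long exact sequence reads
\[
\cdots \to \cal F_{d+1}(X,Y,\xi) @> \partial >> \cal F_d(Y,\xi) \to \cal F_d(X,\xi) \to \cal F_d(X,Y,\xi) @> \partial >> \cdots \,.
\]

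First I would observe that the hypothesis $d \ge 5$ forces $\cal F_d(X,Y,\xi) = 0$ directly from Proposition \ref{prop:pi-pi}, and likewise $\cal F_{d+1}(X,Y,\xi) = 0$, since $d+1 \ge 6 \ge 5$ also satisfies the dimension hypothesis of that proposition. (No extra argument is needed here: the proposition is applied to the same pair $(X,Y)$ in two adjacent degrees.)

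Then, the relevant segment of the exact sequence becomes
\[
0 \to \cal F_d(Y,\xi) \to \cal F_d(X,\xi) \to 0,
\]
from which it is immediate by exactness that $\cal F_d(Y,\xi) \to \cal F_d(X,\xi)$ is an isomorphism. There is no obstacle to overcome: the entire content of the corollary has already been absorbed into Proposition \ref{prop:pi-pi}, and the corollary is just the formal consequence of vanishing relative groups in two consecutive dimensions.
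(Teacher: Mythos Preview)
Your proposal is correct and is exactly the argument the paper has in mind: the corollary is stated without proof because it follows immediately from Proposition~\ref{prop:pi-pi} applied in degrees $d$ and $d+1$ together with the long exact sequence of the pair $(X,Y)$.
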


\begin{proof}[Proof of Theorem \ref{bigthm:stronger-exactness} and Addendum \ref{bigadd:stronger-exactness}]
Assume $d \ge 7$ or $d = 5$.
The homomorphism $\sigma'\: \cal F_d(X,\xi) \to L_d(\pi,w)$ is  natural.
Let $Y \to X$ be a two skeleton. Then $\pi_1(Y) = \pi_1(X)$,  $\dim Y \le 2$, and the square
\[
\xymatrix{
 \cal F_d(Y,\xi) \ar[r]^{\sigma'}\ar[d]_{\cong} & L_d(\pi,w) \ar[d]^{=} \\
\cal F_d(X,\xi) \ar[r]_{\sigma'}  & L_d(\pi,w) 
}
\]
commutes. The left vertical map is an isomorphism by Corollary \ref{cor:pi-pi}. Consequently, it will be enough to show
that  
\begin{equation} \label{eqn:sigma-prime-Y}
\sigma' \: \cal F_d(Y,\xi) \to  L_d(\pi,w)
\end{equation} is an isomorphism.  
\medskip

 \noindent{\it Proof that \eqref{eqn:sigma-prime-Y} is surjective:} 
By Lemma \ref{lem:normal=quinn}, the composition $\sigma'\circ t\: \cal N(Y,\xi) \to L_d(\pi,w)$ coincides with the surgery obstruction $\sigma$. 
If $\pi$ is trivial, then by Theorem \ref{bigthm:sigma-onto}, it follows that $\sigma$ is onto since for odd $d$ the
$L$-groups $L_d$ are trivial. Hence, $\sigma'$ is onto. 
We observe that the same argument works when $\pi$ is non-trivial and $d$ is even.
 \smallskip
 
We will exhibit two proofs that $\sigma'$ is injective. The first uses classical manifold surgery,
was already known, does not assume that $\pi$ is trivial and 
is valid for $d \ge 5$ \cite[\S4]{Levitt_cobordism}, \cite[p.~96]{HV}.
The second proof is manifold-free but assumes either that $\pi$ is trivial or that Theorem 
\ref{bigthm:fund-thm}  holds for non-trivial $\pi$.
\medskip

\noindent{\it First proof that  \eqref{eqn:sigma-prime-Y} is injective:} Note that the map $\xi\: Y \to BG$ factors through $BO$, i.e., $\xi$ lifts to a stable vector bundle. So we may as well assume
that $\xi$ is a stable vector bundle.
Then the composition 
\[
\Omega^{\text{diff}}_d(Y,\xi) @> \iota >> \Omega^P_d(Y,\xi) \to \pi_d(Y^\xi)
\]
is an isomorphism by manifold transversality. 
Moreover, $\sigma'$ factorizes as 
\[
\cal F_d(X,\xi) \to \Omega^P_d(Y,\xi) \to K \to  L_d(X,\xi)\, ,
\]
where $K$ is the cokernel of $\iota$ and the composition $\cal F_d(X,\xi) \to \Omega^{P}_d(Y,\xi) \to K$ is an isomorphism.
We must prove that $K \to L_d(\pi,w)$ is injective.

If $x\in \Omega^{P}_d(Y,\xi)$ be a lift of any element of $K$. Then $x$ is represented by $(Q,\alpha)$, and by transversality applied to $\alpha\: S^d\to Q^\xi$, there is a normal map 
$f\:P \to Q$ in which $P$ is a smooth manifold. Then the classical manifold surgery obstruction $\sigma(f) \in L_d(\pi,w)$ is defined
and is trivial if and only if $f$ is normally cobordant to a homotopy equivalence, i.e., $\sigma(f) = 0$ if and only if $(Q,\alpha)$ lifts to $\Omega^{\text{diff}}_d(Y,\xi)$.
This establishes injectivity.
\medskip
 
 \noindent{\it Second proof that  \eqref{eqn:sigma-prime-Y} is injective:} 
Let $Q\to Y$ be a Poincar\'e  $(d+1)$-thickening of $Y$
having stable normal bundle $\xi$. Then 
the function $t\: \cal N(Q,\xi) \to  \cal F_d(Y,\xi)$ of \eqref{eqn:t} is onto by Corollary \ref{cor:t-onto}.
Moreover, $\sigma = \sigma'\circ t\: \cal N(Q,\xi) \to L_d(\pi,w)$ is the surgery obstruction by Lemma \ref{lem:normal=quinn}.
 Hence, if $x\in \cal F_d(Y,\xi)$, there exists
$\hat x\in \cal N(Q,\xi)$ such that $t(\hat x) = x$. Then $\sigma(\hat x) = \sigma'(x) = 0$. If $\pi$ is trivial, then by 
Theorem \ref{bigthm:fund-thm} 
it follows that $\hat x$ is the trivial element. Then so is $t(\hat x) = x$. Note that
if Theorem  \ref{bigthm:fund-thm} holds for $\pi$ non-trivial, then $x$ is trivial by the same argument.
\end{proof}

\section{A Poincar\'e surgery toolbox} \label{sec:toolbox}

In  this section we
provide the tools that are used in the proof of Theorem \ref{bigthm:fund-thm}. In particular, we deal with the 
problem of performing Poincar\'e surgery below the middle dimension. 
Effort is also made
to introduce some of the homotopy theoretic results needed for Poincar\'e surgery in the non-simply connected case.

The approach taken here will be to follow the proof of Browder's fundamental theorem of simply connected surgery
\cite[thm.~II.1.1]{Browder}, which appears in
 \cite[ch.~IV]{Browder},
along the way replacing each geometric input from manifold theory with a suitable Poincar\'e space analog.

Let $f\: P \to Q$ be a Poincar\'e normal map, where $Q$ is connected. Then the surgery obstruction 
\[
\sigma(f) \in L_d(\pi,w)
\]
is defined, with $\pi = \pi_1(Q), w = w_1(\xi)\: \pi \to \Bbb Z_2$.

\begin{rem} When $\pi$ is the trivial, the above $L$-groups are given by
\[
L_d = \begin{cases} 0 & d \equiv 1,3  \mod 4,\\
\Bbb Z& d \equiv 0 \phantom{13} \mod 4, \\
\Bbb Z_2 & d \equiv 2   \phantom{13}\mod 4\, .
\end{cases}
\]
There are two kinds of ingredients in Browder's approach to the fundamental theorem
of simply connected surgery in the smooth case: geometric and algebraic.
We will see that the algebraic ingredients in the Poincar\'e setting are essentially identical to those in 
Browder's text.
 
 The main geometric input
comes from Whitney's embedding theorems. 
The results of \cite{Klein_haef} and \cite{Klein_compression} 
give the analogues of Whitney's theorems in the Poincar\'e category. We state these below.
We will also  outline the proof that Poincar\'e surgery can be performed in an unobstructed way below the middle dimension.
\end{rem}

\subsection{Poincar\'e embeddings} \label{subsec:embedding} We first
define a {\it codimension zero} Poincar\'e embedding.
Suppose that $(M,\partial M)$  and $(P,\partial P)$ are  $d$-dimensional Poincar\'e pairs
and we are given a map of spaces $f\: P \to M$. Then a Poincar\'e embedding with underlying map $f$ 
is weak equivalence of pairs
\[
(P \cup_{\partial P}  M_0,\partial M) \overset \sim \to (M,\partial M)
\]
which restricts to $f$, where  $(M_0,\partial P\amalg \partial M)$ is a Poincar\'e pair of dimension $d$, called
the {\it complement}.
In addition, we require
that  the fundamental classes of $P$ and $M_0$ are compatible with a fundamental class for $M$.
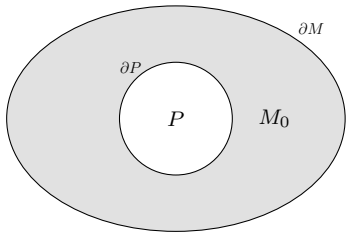
\begin{figure}
\begin{minipage}[b][5cm][s]{.75\textwidth}
\centering
\begin{tikzpicture}[scale=.75]
     \fill[white!60!gray!60] (0,0) ellipse (3 and 2);
    \draw (0,0) ellipse (3 and 2);

       \fill[white] (0,0) circle (1);
           \draw (0,0) circle (1);
    \node at (0,0) {{\tiny $P$}};
    \node at (-.8,.9) {\scalebox{.5}{$\partial P$}};
       \node at (1.75,0) {\tiny{$M_0$}};
             \node at (2.4,1.6) {\scalebox{.5}{$\partial M$}};
\end{tikzpicture}
 \caption{\tiny Schematic of a codimension zero Poincar\'e embedding of $(P,\partial P)$ in $(M,\partial M)$.}
    \label{fig:embedding}
    \end{minipage}
    \vspace{-.3in}
\end{figure}
For example, in the special case when $(P,\partial P)$ and $(M,\partial M)$ are compact smooth $d$-manifolds with boundary,
then a smooth embedding of $P$ into the interior of $M$ yields a Poincar\'e embedding in which $M_0 := M \setminus \text{int}(P)$.
For more details as well as the definition of concordance in this setting, see  \cite{Klein_compression}. 
To avoid clutter, when the Poincar\'e embedding is understood, we denote it by its underlying map
$f\: P \to M$.

Observe that codimension zero Poincar\'e embeddings can be composed: if $P \to M$ and $M \to N$ are codimension zero Poincar\'e embeddings,
then so is the composition $P \to M \to N$. If the complement of $P \to M$ is $(M_0,P \amalg \partial M)$ and the complement of $M \to N$ is 
$(N_0,\partial M \amalg \partial N)$, then the complement
of $P \to N$ is 
\[
(M_0 \cup_{\partial M} N_0,\partial P \amalg \partial N)\, .
\]

Let $K$ have the homotopy type of finite CW complex of dimension $k\le d-3$, and let $f\: K \to M$ be a map.
Then a {\it codimension $d-k$} Poincare embedding of $f$ consists of a Poincar\'e pair $(P,\partial P)$ of dimension $d$, a homotopy equivalence
$h\:K \to P$ and a codimension zero Poincar\'e embedding $F\:P \to M$  as above such that 
$f  = F\circ h$. In addition, we require that the inclusion $\partial P \to P$ is $(d-k-1)$-connected. 
Given these data, we say that $f\: K \to M$ Poincar\'e embeds.
See \cite{Klein_haef} for more details.

\subsection{Surgery below the middle dimension}
We first explain what is meant by an elementary Poincar\'e surgery.
Let $k+\ell+1 =d$. Given a Poincar\'e space $M$ of dimension $d$, 
suppose $\phi: S^k \times D^{\ell+1} \to M$ is a  codimension  zero Poincar\'e embedding. 
That is, we have a homotopy equivalence
\[
(S^k \times D^{\ell+1} )\cup_{S^k \times S^{\ell+1}} M_0 \simeq M
\]
extending $\phi$
in which $(M_0,S^k \times S^\ell)$ is a Poincar\'e pair. 
We let $\phi_0 \: S^p \to M$ be the restriction
of $\phi$ to $S^p \times \{0\}$.

By {\it surgery} on $\phi$, we mean the Poincar\'e space 
\[
M' = M_0 \cup_{S^k \times S^\ell} (D^{k+1} \times S^\ell)\, . 
\]
The restriction  $\psi\: D^{k+1} \times S^\ell \to M'$ is
then a Poincar\'e embedding, and the result of surgery on 
$\psi$ recovers $M$ with the given embedding $\phi$.

The diagram
 \begin{equation} \label{eqn:cofibrations}
{\small \xymatrix{
M_0 \ar[r] \ar[d] & M \ar[r] & S^d \vee S^{\ell+1}\\
M' \ar[d]\\
S^d \vee S^{k+1}
} }
\end{equation}
is such that the vertical and horizontal rows form homotopy cofiber sequences. For the horizontal sequence, we have used the
fact that the cofiber of $M_0 \to M$ coincides with the quotient $(S^k \times D^{\ell+1})/(S^k \times S^{\ell}) \simeq S^d \vee S^{\ell+1}$, and
the vertical sequence is obtained similarly.
When $d = 2k+1$, we may apply the  singular homology functor to \eqref{eqn:cofibrations} to
 obtain the Kervaire-Milnor diagram \cite[p.~515]{Kervaire-Milnor}

\begin{equation}\label{eqn:KM-diagram}
\xymatrix{
&& H_{k+1}(M') \ar[d]_{\cdot \lambda'}\\
&& \Bbb Z \ar[d]_{\epsilon} \ar[dr]^{\lambda} \\
H_{k+1}(M) \ar[r]^(.6){\cdot \lambda} & \Bbb Z \ar[rd]_{\lambda'}\ar[r]^{\epsilon'} & H_k(M_0)\ar[r]^i \ar[d]_{i'} & H_k(M) \ar[r] & 0\\
&& H_k(M') \ar[d] \\
&& 0
}
\end{equation}
in which the horizontal and vertical sequences are exact. Hence, there is a preferred isomorphism
\[
H_k(M)/\lambda(\Bbb Z) \cong H_k(M')/\lambda'(\Bbb Z)\, .
\]
The {\it trace} of the surgery on $\phi$ is  Poincar\'e cobordism $W_\phi$ from $M$ to $M'$ given by
\[
(M\times I) \cup_{S^k\times D^{\ell+1}\times \{1\}} (D^{k+1} \times D^{\ell+1} )\, .
\] 
If we turn $W_\phi$ upside down, we get $W_{\psi}$.  Consequently, we have a chain of homotopy
equivalences
\[
M \cup_{\phi_0}  D^{k+1} \subset W_{\phi} = W_{\psi} \supset M' \cup D^{\ell+1}\, .
\]
In particular, noting that $k < \ell$ is equivalent to $k < (d-1)/2$, 
we obtain a Poincar\'e  space version of Browder's \cite[IV.1.5]{Browder}:

\begin{prop} If $k < (d-1)/2$, then $\pi_i(M) \cong \phi_i(M')$ for $i < k$ and 
\[\pi_k(M') \,\, \cong\,\,  \pi_k(M)/I_\phi\, ,\] where $I_\phi$ is the $\Bbb Z[\pi_1(M)]$-submodule of $\pi_k(M)$ generated by the image of 
the homomorphism $(\phi_0)_*\: \pi_k(S^k)\to \pi_k(M)$.
\end{prop}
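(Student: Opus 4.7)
The plan is to extract both conclusions from the chain of homotopy equivalences
\[ M \cup_{\phi_0} D^{k+1} \,\hookrightarrow\, W_\phi = W_\psi \,\hookleftarrow\, M' \cup_{\psi_0} D^{\ell+1} \]
already established just above the statement, by comparing $\pi_\ast(M)$ and $\pi_\ast(M')$ with $\pi_\ast(W_\phi)$ through the long exact sequences of the pairs $(W_\phi, M)$ and $(W_\psi, M')$. The hypothesis $k < (d-1)/2$ is equivalent to $k < \ell$, i.e., $\ell + 1 \ge k+2$; this dimensional slack is precisely what prevents the second cell attachment from disturbing $\pi_k$.

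For the first pair, $W_\phi$ is obtained from $M$ by attaching a single $(k+1)$-cell along $\phi_0$, so the inclusion $M \hookrightarrow W_\phi$ is $k$-connected. Passing to universal covers with $\pi := \pi_1(M)$, the relative complex $(\tilde W_\phi, \tilde M)$ has one free $\pi$-orbit of $(k+1)$-cells, so relative Hurewicz identifies $\pi_{k+1}(\tilde W_\phi, \tilde M)$ with the rank-one free $\Bbb Z[\pi]$-module whose boundary in $\pi_k(M) = \pi_k(\tilde M)$ is $[\tilde \phi_0]$, the class of a chosen lift of $\phi_0$. The long exact sequence of the pair then yields $\pi_i(M) \cong \pi_i(W_\phi)$ for $i < k$ together with an exact sequence
\[ \Bbb Z[\pi] \to \pi_k(M) \to \pi_k(W_\phi) \to 0 \]
whose image coincides by definition with $I_\phi$. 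The symmetric argument for $(W_\psi, M')$ shows that the pair is $\ell$-connected, and since $k \le \ell-1$ the inclusion induces $\pi_i(M') \cong \pi_i(W_\psi)$ for $i \le k$. Using $W_\phi = W_\psi$, chaining the two comparisons produces the desired isomorphisms $\pi_i(M)\cong \pi_i(M')$ for $i<k$ and $\pi_k(M)/I_\phi \cong \pi_k(M')$.

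None of the individual steps is technically hard, since the trace cobordism and the two cell-attachment descriptions have been packaged for us in the discussion preceding the statement. The only point that warrants care is the equivariant identification of the image of the boundary map $\Bbb Z[\pi] \to \pi_k(M)$ with the $\Bbb Z[\pi]$-submodule generated by $(\phi_0)_\ast(\pi_k(S^k))$; this follows from $\pi_k(S^k) = \Bbb Z$ together with the naturality of the covering action, and for $k=1$ it reduces to fixing a basepoint lift so that the same formalism applies.
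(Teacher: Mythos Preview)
Your proof is correct and follows precisely the route the paper intends: the paper does not give a separate proof of this proposition but simply records it as a consequence of the chain of homotopy equivalences $M \cup_{\phi_0} D^{k+1} \simeq W_\phi = W_\psi \simeq M' \cup D^{\ell+1}$ established just above, together with the observation that $k < (d-1)/2$ is equivalent to $k<\ell$, and a reference to Browder \cite[IV.1.5]{Browder}. Your argument is exactly the standard unpacking of that chain via the long exact sequences of the two cell-attachment pairs, and your handling of the $\Bbb Z[\pi]$-module structure on $\pi_{k+1}(W_\phi,M)$ through the universal cover is the right way to identify the kernel with $I_\phi$.
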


Fix a Poincar\'e normal map $f \: P \to Q$ of dimension $d$ and
assume we are given a codimension $\ell+1$ Poincar\'e embedding with underlying map $\phi_0\: S^k \to P$. 
Hence, 
we have codimension zero Poincar\'e embedding $\phi\: E \to P$ given by
\[
E \cup_{\partial E} P_0 @>  \simeq >> P
\]
and a homotopy equivalence $h\:S^k @> {}_\simeq >> E$ such that  $\phi\circ h = \phi_0$. 

Assume $f\circ \phi_0\: S^k \to Q$ is provided with a choice of extension to the disk $D^{k+1}$. Then we obtain
a map 
\[
F_0\: P \cup_{\phi_0} D^{k+1} \to Q
\] 
extending $f$.
We seek criteria for deciding when this map can be thickened up to a normal cobordism
$F\: (W_\phi,P,P') \to (Q\times I,Q\times \{0\},Q\times \{1\})$.
Clearly, the condition should be that there is a weak equivalence
\begin{equation}\label{eqn:e-bdye}
 (E,\partial E) \simeq (S^k \times D^{\ell+1},S^k \times S^\ell) \, .
\end{equation}
We can reformulate the condition as follows: 
if $k \le d-3$, then the homotopy fiber of $\partial E\to E$ has the homotopy type of 
$S^\ell$ (cf.\ \cite[rem.\ 2.4.1]{Klein_haef}), so $\partial E \to E$  has the fiber homotopy type of  an $\ell$-spherical fibration.
Hence, it is classified by a map $\xi\: S^k \to BG_{\ell+1}$, where $BG_{\ell+1}$ is the classifying space
of $\ell$-spherical fibrations.  

\begin{lem} \label{lem:preferred-triv} The composition $S^k @> \xi >> BG_{\ell+1} \to BG$ has a preferred null homotopy.
That is, $\xi$ has a preferred stable trivialization.
\end{lem}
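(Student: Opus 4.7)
The plan is to identify $\xi$ stably with $-\phi_0^*\xi_P$ by a Poincar\'e analogue of the Whitney sum formula for normal bundles, transport this identification through the normal map data, and then read off the preferred stable trivialization from the chosen extension of $f \circ \phi_0$ across $D^{k+1}$ that made $F_0$ well-defined.

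First I would establish, from the codimension $\ell+1$ Poincar\'e embedding data $(E,\partial E)$ for $\phi_0$, a preferred stable equivalence
\[
\phi_0^*\xi_P \oplus \xi \;\simeq\; \xi_{S^k},
\]
where $\xi_{S^k}$ is the Spivak normal fibration of $S^k$. The plan for this identity is to apply the Pontryagin--Thom collapse associated with the Poincar\'e embedding: the pushout decomposition $P \simeq E \cup_{\partial E} P_0$ yields a collapse $P \to P/P_0 \simeq E/\partial E$, and smashing this with $\xi_P$ gives a stable map
\[
P^{\xi_P} \;\longrightarrow\; (S^k)^{\phi_0^*\xi_P \oplus \xi}.
\]
The image of the Spivak normal invariant $\alpha_P \colon S^d \to P^{\xi_P}$ under this map is a normal invariant for $S^k$, and Spivak uniqueness then identifies $\phi_0^*\xi_P \oplus \xi$ stably with $\xi_{S^k}$. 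Since $S^k$ embeds in $\mathbb R^{k+1}$, the fibration $\xi_{S^k}$ is canonically stably trivial, and therefore $\xi \simeq -\phi_0^*\xi_P$ stably with a preferred equivalence.

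Second, the Poincar\'e normal map structure on $f$ contains a preferred stable fibre homotopy equivalence $\xi_P \simeq f^*\xi_Q$, whence $\phi_0^*\xi_P \simeq (f \circ \phi_0)^* \xi_Q$ stably. The construction of $F_0$ rests on a chosen extension $\bar\phi \colon D^{k+1} \to Q$ of $f \circ \phi_0$. Pulling back $\xi_Q$ along $\bar\phi$ produces a stable spherical fibration on the contractible disk $D^{k+1}$, which is canonically stably trivial; restricting to the boundary $S^k$ gives a preferred stable trivialization of $(f \circ \phi_0)^*\xi_Q$, and hence of $\phi_0^*\xi_P$. Combining this with the first step yields the asserted preferred stable trivialization of $\xi$.

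The main technical obstacle is the Poincar\'e Whitney sum formula of the first step. In the smooth category it is immediate from $\tau_M|_N = \tau_N \oplus \nu$; in the Poincar\'e category, where only the Spivak normal fibration is available, it must be extracted from the Thom spectrum collapse together with uniqueness of Spivak normal fibrations (compare \cite{Klein_dualizing}). I would expect to record it as a short lemma about codimension $\geq 3$ Poincar\'e embeddings, keeping careful track of fundamental classes so that the equivalence is preferred rather than merely existent; all remaining steps are formal.
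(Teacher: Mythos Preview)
Your proposal is correct and follows essentially the same route as the paper: the paper's proof also uses the stable identification $\nu_{S^k} \simeq \xi + \phi_0^*\nu_P$, the canonical triviality of $\nu_{S^k}$, the normal map data $\nu_P \simeq f^*\nu_Q$, and the null homotopy of $f\circ\phi_0$ to trivialize $(f\circ\phi_0)^*\nu_Q$. The only difference is that you supply a justification (via the Pontryagin--Thom collapse and Spivak uniqueness) for the Poincar\'e Whitney sum formula, whereas the paper simply asserts it.
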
  

\begin{proof} There is a stable fiber homotopy equivalence $\nu_{S^k} \simeq \xi + (\phi_0)^*\nu_P$,
where $\nu_P$ is the Spivak fibration of $P$. Furthermore, $\nu_{S^k}$ has a preferred trivialization
and $\nu_P \simeq f^*\nu_Q$. Consequently, we obtain
\[
\epsilon \,\, \simeq \,\,   \xi + (f\circ \phi_0)^*\nu_Q 
\]
(where $\epsilon$ is trivial)
and the null homotopy of $f\circ \phi_0$ trivializes $(f\circ \phi_0)^*\nu_Q$.  
This gives a stable trivialization of $\xi$. 
\end{proof}

From the lemma, we obtain a factorization 
\[
\xymatrix{
S^p \ar[r]^\xi \ar[d] &  BG_{\ell+1} \ar[d] \\
D^{p+1} \ar[r] & BG \, .
}
\]
The homotopy class of this factorization corresponds to an element $\cal O \in \pi_k(G/G_{\ell+1})$.

\begin{prop}[{cf.\ \cite[IV.1.6]{Browder}}] \label{prop:obstruction}  The obstruction $\cal O \in \pi_k(G/G_{\ell+1})$ is trivial if and only if 
$\phi_0$ extends to a Poincar\'e embedding $\phi\: S^k \times D^{\ell+1} \to P$
such that $F\: W_\phi\to Q$ extends $F_0$ and 
  defines a normal bordism of  $f$.
\end{prop}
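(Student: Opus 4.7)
The plan is to follow the strategy of \cite[IV.1.6]{Browder}, with codimension zero Poincar\'e embedding data replacing the smooth tubular neighborhood. The guiding observation is that $\cal O \in \pi_k(G/G_{\ell+1})$, extracted from the homotopy fiber sequence $G/G_{\ell+1} \to BG_{\ell+1} \to BG$, precisely measures the failure of the preferred stable null homotopy of $\xi\: S^k \to BG_{\ell+1}$ supplied by Lemma \ref{lem:preferred-triv} to refine to an unstable null homotopy of $\xi$ as a map to $BG_{\ell+1}$. Thus $\cal O = 0$ is equivalent to the existence of an unstable null homotopy of $\xi$ whose composite with $BG_{\ell+1} \to BG$ is homotopic, rel $S^k$, to the preferred stable one.

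For the ``if'' direction, I would first assume $\cal O = 0$ and extract an unstable null homotopy of $\xi$, giving a fiber homotopy equivalence $(\partial E \to E) \simeq (S^k \times S^\ell \to S^k)$. Combining this with the homotopy equivalence $h\: S^k \to E$ produces the pair equivalence \eqref{eqn:e-bdye}, which promotes $\phi_0$ to a codimension zero Poincar\'e embedding $\phi\: S^k \times D^{\ell+1} \to P$. Next I would form the trace $W_\phi$ by attaching $D^{k+1} \times D^{\ell+1}$ along $S^k \times D^{\ell+1} \times \{1\} \subset P \times I$, and extend $F_0$ to a map $F\: W_\phi \to Q \times I$ using the chosen extension of $f\circ \phi_0$ to $D^{k+1}$. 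A Spivak-compatible bundle datum on $W_\phi$ is assembled from the stable trivialization of $\xi$ (provided by Lemma \ref{lem:preferred-triv}) and the pullback of $\nu_Q$ along the extension over $D^{k+1}$; these assemble coherently exactly because the vanishing of $\cal O$ forces the unstable and stable null homotopies to be compatible.

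For the ``only if'' direction, given $\phi\: S^k \times D^{\ell+1} \to P$ and the normal bordism $F$, the product structure of $\phi$ yields an unstable fiber homotopy trivialization of $\xi$ as an $\ell$-spherical fibration, while $F$ yields a stable trivialization of the Spivak data over the attached handle that agrees with the preferred stable trivialization of Lemma \ref{lem:preferred-triv}. These two pieces of data exhibit a lift of the classifying square for $\cal O$ through $BG_{\ell+1}$, so the corresponding element of $\pi_k(G/G_{\ell+1})$ is trivial.

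The main obstacle I expect is the bookkeeping in the ``if'' direction: one must verify that the unstable null homotopy of $\xi$ produced from $\cal O = 0$ can be glued to the preferred stable null homotopy to yield a genuine Spivak normal structure on $W_\phi$, rather than just two compatible trivializations that agree after further stabilization. This amounts to tracking how the long exact sequence of the fibration $G/G_{\ell+1} \to BG_{\ell+1} \to BG$ interacts with the identification of $\cal O$ with the homotopy class of the displayed factorization, and it is the Poincar\'e-category analog of the smooth statement that framing an attached handle is the same as trivializing the normal bundle of its attaching sphere compatibly with the ambient normal data.
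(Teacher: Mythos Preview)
Your proposal is correct and follows essentially the same route as the paper: identify $\cal O$ as the obstruction to lifting the preferred stable null homotopy of $\xi$ to an unstable one, use its vanishing to obtain the pair equivalence \eqref{eqn:e-bdye} and hence the trace $W_\phi$ with the required Spivak data, and conversely read off the unstable trivialization from an existing $\phi$ and $F$. One minor point: you have swapped the labels of the two implications (in the biconditional ``$\cal O$ trivial $\Leftrightarrow$ $\phi_0$ extends'', assuming $\cal O=0$ is the \emph{only if} direction), though the mathematics under each label is correct.
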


\begin{proof}  If $F$ exists then the identification \eqref{eqn:e-bdye} is valid
and so the stable trivialization of $\xi$ given above lifts in a preferred way to an unstable one. This
shows that $\cal O$ is trivial.

Conversely, suppose the obstruction $\cal O$ is represented by $S^k \to G/G_{\ell+1}$.
Let $g\: D^{k+1} \to X$ be the null homotopy of $f\circ \phi_0$. 
This amounts to the following data: a choice of $\ell$-spherical fibration over $S^k$, namely $\xi$,
together with a choice of extension of $\xi$ to $D^{k+1}$ when $\xi$ is considered as a stable spherical
fibration. The choice of this extension uses the identification 
$b\: (\phi_0)^*\nu_P\simeq (f\circ \phi_0)\nu_Q$ together with $g^*\nu_Q$.

If $\cal O$ is trivial, we obtain an extension $D^{k+1} \to G/G_{\ell+1}$ which amounts to
an unstable extension of $\xi$ to a $k$-spherical fibration over $D^{\ell+1}$ which lifts
the given stable extension.
In particular $\xi$ is trivializable and $(E,\partial E)$ has the homotopy type of
$(S^k \times D^{\ell+1},S^k \times S^\ell)$. Hence an extension $F\: W_\phi \to Q$ is defined.
It remains  to show that $F$ is compatible with the Spivak fibration data.
Let $\eta$ be the Spivak fibration of $W_\phi$ and let $j\: P \cup_{\phi_0} D^{k+1} \subset 
W_\phi$ be the inclusion. Then $j^*\eta$ coincides with  $\nu_P$ over $P$ 
and a trivializable fibration over $D^{k+1}$ which we
can take to be $g^*\nu_Q$, where along $S^k$ 
one sees that  $j^*\eta$ is identified with $(F\circ j)^*\nu_Q$ and this identification coincides
with the given one on $P$.
\end{proof}

\begin{lem} \label{lem:unstable-stable} Assume $\ell \ge 2$. Then $G/G_{\ell+1}$ is $\ell$-connected. Furthermore, 
$\pi_{\ell+1}(G/G_{\ell+1}) = \Bbb Z_2$ if $\ell$ is even and $\Bbb Z$  if $\ell$ is odd.
\end{lem}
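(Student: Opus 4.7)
The plan is to use the long exact sequence of the fibration
\[
G_{\ell+1} \to G \to G/G_{\ell+1},
\]
or equivalently of $G/G_{\ell+1} \to BG_{\ell+1} \to BG$. Under the standard identifications $\pi_i(G) = \pi_i^S$ and $\pi_i(G_{\ell+1})$ an unstable homotopy group of the relevant sphere (for $i \ge 1$), the map induced by $G_{\ell+1} \to G$ is identified with the iterated Freudenthal suspension map from an unstable homotopy group into the corresponding stable stem.

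First I would invoke the Freudenthal suspension theorem: $\pi_n(S^k) \to \pi_{n+1}(S^{k+1})$ is an isomorphism for $n \le 2k-2$ and an epimorphism for $n = 2k-1$, with all later suspensions landing deeper in the Freudenthal range. Iterating and plugging this into the long exact sequence yields that the connecting maps vanish in the claimed range, which gives the $\ell$-connectedness of $G/G_{\ell+1}$.

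For the computation of $\pi_{\ell+1}(G/G_{\ell+1})$, I would invoke the EHP sequence to identify the first unstable deviation as the cyclic subgroup generated by the Whitehead product $[\iota,\iota]$ on the relevant sphere. A classical computation (Whitehead, James) shows that this Whitehead square has infinite order on even-dimensional spheres and order dividing $2$ on odd-dimensional ones; matching parities with the index shift between $\ell$ and the sphere dimension produces the $\Bbb Z$ versus $\Bbb Z_2$ dichotomy asserted. A short further argument, again by Freudenthal applied in a slightly higher range, shows that the relevant extension in the long exact sequence is trivial and that the computed Whitehead-square contribution is all of $\pi_{\ell+1}(G/G_{\ell+1})$.

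The main obstacle is the careful index and parity bookkeeping: because of the dimension shift between $G_{\ell+1}$ and the sphere it acts on, the parity of $\ell$ appearing in the conclusion is opposite to the parity of the sphere dimension for which the Whitehead square is being computed. A minor technical point is isolating (and verifying by hand) the small-dimensional exceptions where the Hopf-invariant-one phenomenon could degenerate the Whitehead square; under the hypothesis $\ell \ge 2$ these are few and explicit.
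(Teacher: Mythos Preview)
Your approach is genuinely different from the paper's. The paper does not compute directly with the fibration $G_{\ell+1}\to G\to G/G_{\ell+1}$ at all; instead it invokes Haefliger's result that the comparison map $O/O_{\ell+1}\to G/G_{\ell+1}$ is $(2\ell-1)$-connected, which immediately reduces both the connectivity statement and the computation of $\pi_{\ell+1}$ to the orthogonal case $O/O_{\ell+1}$, where they are read off from Browder's book. This is a two-line argument that avoids any sphere-homotopy bookkeeping.

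Your direct route via Freudenthal and EHP can be made to work, and you are right about the crucial parity shift: the relevant sphere is $S^{\ell+1}$, since the correct identification (via unreduced suspension of an unbased self-map of $S^\ell$ to a based self-map of $S^{\ell+1}$) is $\pi_i(G_{\ell+1})\cong\pi_{i+\ell+1}(S^{\ell+1})$, not $\pi_{i+\ell}(S^\ell)$. With that in hand, Freudenthal indeed gives $\ell$-connectivity, and the kernel of $\pi_\ell(G_{\ell+1})\to\pi_\ell(G)$ is generated by $[\iota_{\ell+1},\iota_{\ell+1}]$, producing the $\Bbb Z$ versus $\Bbb Z_2$ dichotomy. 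One point your sketch underplays: showing that the \emph{cokernel} of $\pi_{\ell+1}(G_{\ell+1})\to\pi_{\ell+1}(G)$ vanishes in the generic case is not a Freudenthal statement---you are one degree past the surjective range---but another EHP/Whitehead-product argument. And in the exceptional cases $\ell=2,6$ the Whitehead square on $S^{\ell+1}$ is zero, so the $\Bbb Z_2$ must come entirely from this cokernel (e.g.\ $\pi_6(S^3)\to\pi_3^S=\Bbb Z_{24}$ has cokernel $\Bbb Z_2$); these genuinely need a separate check. The paper's route via $O/O_{\ell+1}$ sidesteps all of this casework.
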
 

\begin{proof}   By \cite{Haefliger}, the map $O/O_{\ell+1}\to G/G_{\ell+1}$ is $(2\ell-1)$-connected (see \cite{Klein_MO} for a homotopy theoretic argument). Hence,  $\pi_\ell(G/G_{\ell+1}) \cong \pi_\ell(O/O_{\ell+1})$ if $\ell \ge 2$.
Moreover, $O/O_{\ell+1}$ is $\ell$-connected.
The result now follows from \cite[thm.~IV.1.12]{Browder}.
\end{proof}

\begin{cor} \label{cor:obstruction_vanishes} 
Let $\phi_0\: S^k\to P$ be as above. If $2k+1 \le d$, then
the obstruction $\cal O$ vanishes.
\end{cor}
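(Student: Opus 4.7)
The plan is straightforward: translate the dimensional hypothesis into a connectivity statement that forces the obstruction group to be trivial, then invoke Lemma \ref{lem:unstable-stable}.

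First, recall that by construction $k + \ell + 1 = d$, so the hypothesis $2k+1 \le d$ is equivalent to $k \le \ell$. In the generic range $\ell \ge 2$, Lemma \ref{lem:unstable-stable} tells us that $G/G_{\ell+1}$ is $\ell$-connected, so $\pi_j(G/G_{\ell+1}) = 0$ for all $j \le \ell$. Since $\cal O \in \pi_k(G/G_{\ell+1})$ and $k \le \ell$, the obstruction is automatically trivial.

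It remains to inspect the boundary cases $\ell \le 1$. If $\ell = 1$, the condition $k \le \ell = 1$ combined with the standing assumption $k \le d-3$ used to make the Poincar\'e embedding hypothesis meaningful forces $k \in \{0,1\}$; a direct inspection of the low-dimensional homotopy of $G/G_2$ handles this case (the obstruction lives in $\pi_0$ or $\pi_1$ of a path-connected space, and the preferred stable trivialization of Lemma \ref{lem:preferred-triv} yields a concrete factorization). An analogous check dispenses with $\ell = 0$ or any other degenerate range, though under the framing-producing setup of Lemma \ref{lem:preferred-triv} these cases are essentially vacuous.

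I do not anticipate a genuine obstacle: the whole content is the interplay between the arithmetic identity $\ell = d - k - 1$ and the connectivity statement of Lemma \ref{lem:unstable-stable}. The only place where one must be a bit careful is verifying that the formulation of $\cal O$ as an element of $\pi_k(G/G_{\ell+1})$ coming from the square at the end of the discussion preceding Lemma \ref{lem:unstable-stable} is indeed the class being annihilated by connectivity; this is immediate since $\cal O$ is by definition represented by the homotopy class of the factorization, i.e., an element of the relative homotopy group $\pi_{k+1}(BG, BG_{\ell+1}) \cong \pi_k(G/G_{\ell+1})$.
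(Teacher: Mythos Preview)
Your core argument is correct and matches the paper's proof exactly: translate $2k+1 \le d$ into $k \le \ell$ via $k+\ell+1=d$, then invoke Lemma~\ref{lem:unstable-stable} to conclude $\pi_k(G/G_{\ell+1})=0$. The paper's proof is the single line ``In this case $k \le \ell$, so $\pi_k(G/G_{\ell+1})$ is trivial.'' Your additional discussion of the boundary cases $\ell \le 1$ is not needed here---the ambient hypotheses (in particular $d \ge 5$, which is in force throughout this section) guarantee $\ell \ge 2$---and your treatment of those cases is in any event only a sketch; you can safely omit it.
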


\begin{proof} In this case $k \le \ell$, so $\pi_k(G/G_{\ell +1})$ is trivial.\end{proof}

\begin{lem}\label{lem:unstable-stable-refine} The connecting homomorphism
\[
\partial \: \pi_{k+1}(G/G_{k+1}) \to \pi_q (G_{k+1})
\]
is injective unless $k = 0,2,6$. It is trivial otherwise.
\end{lem}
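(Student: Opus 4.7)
The plan is to interpret $\partial$ geometrically as the clutching map of the unit tangent sphere bundle of $S^{k+1}$ and then to invoke the classical theorems of James and Adams. First, I extract the long exact sequence of the fibration $G_{k+1} \to G \to G/G_{k+1}$:
\[
\pi_{k+1}(G) \to \pi_{k+1}(G/G_{k+1}) \xrightarrow{\partial} \pi_k(G_{k+1}) \to \pi_k(G).
\]
By Lemma \ref{lem:unstable-stable}, for $k \ge 2$ the map $O/O_{k+1} \to G/G_{k+1}$ induces an isomorphism $\pi_{k+1}(O/O_{k+1}) \cong \pi_{k+1}(G/G_{k+1})$, which is $\Bbb Z_2$ for $k$ even and $\Bbb Z$ for $k$ odd; the cases $k = 0, 1$ are handled by direct computation (using $G_1 \simeq \Bbb Z_2$ and $G_2 \simeq O(2)$).

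Next, I exhibit a preferred class. The unit tangent sphere bundle $T^1 S^{k+1}$ is an $S^k$-spherical fibration over $S^{k+1}$, and the standard stable trivialization $TS^{k+1} \oplus \nu \cong \epsilon^{k+2}$ arising from the embedding $S^{k+1} \hookrightarrow \Bbb R^{k+2}$ lifts its classifying map to $S^{k+1} \to G/G_{k+1}$, producing a class $\tau \in \pi_{k+1}(G/G_{k+1})$. Passing back through the isomorphism with $\pi_{k+1}(O/O_{k+1})$, classical bundle theory over $S^{k+1}$ identifies $\tau$ as a generator when $k$ is even and as a nonzero class of infinite order when $k$ is odd (cf.~\cite[IV.1.12]{Browder}); this is the one technical identification I take as the main obstacle of the argument. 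By construction, $\partial(\tau) \in \pi_k(G_{k+1})$ is the clutching function of $T^1 S^{k+1}$ viewed as a spherical fibration.

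Now I invoke James's theorem that $T^1 S^n$ is fiberwise homotopically trivial iff $S^n$ is an H-space, together with Adams's Hopf invariant one theorem that this holds precisely when $n \in \{1, 3, 7\}$. Thus for $k \in \{0, 2, 6\}$, $\partial(\tau) = 0$ and hence $\partial$ is trivial. For $k$ even with $k \notin \{0, 2, 6\}$, the domain $\Bbb Z_2$ is cyclic, and non-triviality of $\partial(\tau)$ by the contrapositive of James-Adams forces injectivity. Finally, for $k$ odd I compose with the evaluation map $\text{ev}\: G_{k+1} \to S^k$ at the basepoint: the induced $\text{ev}_* \: \pi_k(G_{k+1}) \to \pi_k(S^k) = \Bbb Z$ sends the clutching function of $T^1 S^{k+1}$ to the Euler number $\chi(S^{k+1}) = 2$, so $\text{ev}_*(\partial(\tau)) \ne 0$; this shows $\partial(\tau)$ has infinite order in $\pi_k(G_{k+1})$, and since the domain $\Bbb Z$ is torsion-free, $\partial$ must be injective.
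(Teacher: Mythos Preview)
Your argument is correct and proceeds along a genuinely different path from the paper's.  For the triviality cases $k=0,2,6$ the two treatments are close in spirit: both exploit the $H$-space structure on $S^{k+1}$ (the paper uses it to split $G_{n+1}\to G_{n+1}/G_n$ in the relevant range, you use it via James's theorem on the fiber homotopy type of $T^1S^{k+1}$).  For injectivity, however, the paper reformulates the question as surjectivity of the double suspension
\[
E^2\colon \pi_{2n}(S^n)\to \pi_{2n+2}(S^{n+2})\qquad (n=k+1)
\]
and then runs the EHP sequences for $S^n$ and $S^{n+1}$, reducing the obstruction to the nonexistence of an element of Hopf invariant one in $\pi_{2n+1}(S^{n+1})$ and invoking Adams.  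You instead stay with the clutching description: having identified a generator $\tau$ with the tangent sphere bundle plus its standard stable framing, you apply James--Adams directly to $T^1S^{k+1}$ for $k$ even, and for $k$ odd you detect infinite order of $\partial(\tau)$ by pushing forward along $\mathrm{ev}\colon G_{k+1}\to S^k$ and reading off the Euler number $\chi(S^{k+1})=2$.  Your odd-$k$ argument is more elementary (it does not even need Adams), while the paper's EHP approach is uniform across parities and makes the link to the metastable suspension problem explicit.  The ``technical identification'' you flag---that $\tau$ generates $\pi_{k+1}(G/G_{k+1})$---follows from the fact that $S^{k+1}=O_{k+2}/O_{k+1}\to O/O_{k+1}$ is $(k+1)$-connected and carries the identity class to the clutching class of the tangent bundle.
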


\begin{proof} Let $n = k+1$ and suppose $n = 1,3,7$. 
Consider the fiber sequence
 \[
 G_n @> i >> G_{n+1} @> p >> G_{n+1}/G_n\, .
 \] 
In these dimensions,  $S^{n}$ admits an $H$-space structure
$S^n \times S^n \to S^n$ which is adjoint to a map $S^n\to G_{n+1}$.
The composition $S^{n} \to G_{n+1} \to G_{n+1}/G_n$ coincides with the $(2n-3)$-connected map
 $O_{n+1}/O_n \to G_{n+1}/G_n$.  The homomorphism $p_\ast\: \pi_\ast(G_{n+1}) \to \pi_{\ast}(G/G_{n+1})$ is therefore surjective for $\ast \le 2n-3$. In particular,
 $i_\ast\: \pi_{n-1}(G_n) \to \pi_{n-1}(G_{n+1}) = \pi_{n-1}(G)$ is injective. We infer that 
 $\partial\: \pi_n(G/G_n) \to \pi_{n-1}(G_n)$ is trivial when $n =1,3,7$.

When $n \ne 1,3,7$ we are indebted to Tyler Lawson for providing the following argument.
The injectivity of $\partial$ is equivalent to the surjectivity of the homomorphism
$\pi_{n}(G_{n}) \to \pi_{n}(G_{n+2})$.  Since we are just outside of the stable range, it  is also equivalent
to the surjectivity of double suspension homomorphism
\[
E^2\: \pi_{2n}(S^n) \to \pi_{2n+2}(S^{n+2})\, .
\]
We will show that the latter is surjective when $n \ne 1,3,7$.
Consider the EHP-sequences for $S^n$ and $S^{n+1}$:
{\small \begin{align*}
  &  \cdots @> P >>    \pi_{2n}(S^n) @> E >>       \pi_{2n+1}(  S^{n+1}) @> H >>    \Bbb Z @> P >> \pi_{2n-1}(S^{n}) @> E>>  \pi_{2n}(S^{n+1}) \to 0 \, ,\\
     \cdots @> P >> &   \pi_{2n+2}(S^{n+1}) @> E >>       \pi_{2n+3}(  S^{n+2}) @> H >>    \Bbb Z @> P >> \pi_{2n+1}(S^{n+1}) @> E>>  \pi_{2n+2}(S^{n+2}) \to 0 
\end{align*}}
\!\!\cite{James_EHP}.
If $n$ is even then $H\: \pi_{2n+1}(S^{n+1}) \to  \Bbb Z$ is trivial because $\pi_{2n+1}(S^{n+1})$ is finite \cite{Serre}.

If $n$ is odd, then $P\:  \Bbb Z \to \pi_{2n+1}(S^{n+1})$ sends the generator to the Whitehead product $[\iota,\iota]$.
Furthermore, $H\circ P\:\Bbb Z \to  \Bbb Z$ is multiplication by $2$, since $H([\iota,\iota])$ is twice the generator.

Consequently, given $\alpha \in \pi_{2n+2}(S^{n+2})$,  we can lift it to $\beta\in \pi_{2n+1}(S^{n+1})$ and any
other lift is of the form $\beta + j[\iota,\iota]$ for $j\in \Bbb Z$. This lifts further to $\pi_{2n}(S^n)$ if and only if
$H(\beta + j[\iota,\iota]) = H(\beta) + 2j = 0$, i,.e., $H(\beta)\in \Bbb Z$ is even. This is only possible when
$\pi_{2n+1}(S^{n+1})$ has no element of Hopf invariant one, which according to \cite{Adams} only occurs 
when $n \ne 1,3,7$. 
\end{proof}

The following result is the Poincar\'e duality space analog of \cite[thm.\ IV.1.13]{Browder} and
\cite[thm.~1.2]{Wall}.

\begin{thm} \label{below_middle} Given $X\in \Top_{/BG}$,  let $f\: P \to X$ be a 
Poincar\'e space over $X$ of dimension $d \ge 5$.
Then $f$ admits a cobordism over $BG$ to $f'\: P' \to X$ which  is $\lfloor d/2\rfloor$-connected.
\end{thm}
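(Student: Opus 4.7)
The plan is to follow the pattern of Browder's \cite[thm.~IV.1.13]{Browder}, performing a sequence of Poincar\'e surgeries below the middle dimension. Explicitly, I will proceed by induction on $k$ and show that, for $0 \le k \le \lfloor d/2\rfloor$, the map $f\colon P\to X$ admits a cobordism over $BG$ to a $k$-connected representative. The $\pi_0$ and $\pi_1$ base cases will be handled by Poincar\'e one-handle attachments (connected sum and loop-killing), and can also be subsumed into the inductive step below; the heart of the matter is the inductive step for $k\ge 1$.

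For the inductive step, assume $f$ is $k$-connected with $k+1 \le \lfloor d/2\rfloor$, so in particular $2k+1\le d$ and $k\le d-3$. Because $P$ is homotopy finite, the relative Hurewicz theorem applied to the pullback of $f$ along the universal cover of $X$ exhibits $\pi_{k+1}(f)$ as a finitely generated $\Bbb Z[\pi_1(X)]$-module. I would pick a generator $\alpha$, represented by a map $\phi_0\colon S^k\to P$ together with a chosen null-homotopy $g\colon D^{k+1}\to X$ of $f\circ\phi_0$. By Theorem \ref{my_thm}, the map $\phi_0$ admits a codimension $\ell+1 := d-k$ Poincar\'e embedding into $P$.

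Setting the stage for surgery, I will appeal to Corollary \ref{cor:obstruction_vanishes}: the inequality $2k+1\le d$ forces the obstruction $\cal O\in \pi_k(G/G_{\ell+1})$ to vanish, so Proposition \ref{prop:obstruction} produces a codimension zero Poincar\'e embedding $\phi\colon S^k\times D^{\ell+1}\to P$ together with a normal bordism, via the trace $W_\phi$, from $f$ to a new $f'\colon P'\to X$ over $BG$. Note that Proposition \ref{prop:obstruction} is stated for a normal map into a Poincar\'e space $Q$, but its proof uses only the Spivak/stable-normal-bundle data, which the map $X\to BG$ already supplies, so the same argument applies verbatim with $X$ in place of $Q$.

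To check the homotopical effect I will use the pair of equivalences $W_\phi \simeq P\cup_{\phi_0} D^{k+1} \simeq P'\cup_{\phi_0'} D^{\ell+1}$ coming from the trace picture. The first, combined with $g$, extends $f$ to a $k$-connected map $W_\phi\to X$ whose $\pi_{k+1}$ equals $\pi_{k+1}(f)/\langle \alpha\rangle$; the second shows that the inclusion $P'\hookrightarrow W_\phi$ is $\ell$-connected, and since $\ell = d-k-1 \ge k+1$ (equivalent to $k+1\le \lfloor d/2\rfloor$) the same statement passes to $f'$. Iterating over a finite generating set of $\pi_{k+1}(f)$ completes the inductive step. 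The main obstacle I anticipate is ensuring that every appeal to Poincar\'e embedding and subsequent surgery remains compatible with the Spivak normal structure over $X$; this delicate bookkeeping has been packaged into Theorem \ref{my_thm} and Proposition \ref{prop:obstruction}, reducing what remains to elementary connectivity estimates and the finite generation of $\pi_{k+1}(f)$.
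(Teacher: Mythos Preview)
Your proposal is correct and follows essentially the same approach as the paper: both adapt Browder's inductive argument \cite[thm.~IV.1.13]{Browder}, substituting the Poincar\'e embedding results (you cite Theorem \ref{my_thm}; the paper cites its consequence Corollary \ref{cor:easy}(i)) for the Whitney embedding theorem, and invoking Proposition \ref{prop:obstruction} together with Corollary \ref{cor:obstruction_vanishes} to ensure the surgery is compatible with the normal data. Your proof sketch is in fact more explicit than the paper's, which simply asserts that Browder's argument goes through with these replacements.
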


\begin{proof} 
By implementing the obstruction $\cal O$ above, Browder's argument  \cite[thm.\ IV.1.13, lem.~IV.1.13-16]{Browder} 
adapts with just one additional change: 
wherever the Whitney embedding theorem is used one replaces it with part (i) of
Corollary \ref{cor:easy} below.  \end{proof}

\subsection{Poincar\'e embedding results required for surgery} 
The Poincar\'e space version of the embedding theorem alluded to in Theorem \ref{below_middle} 
is a consequence of the following result.

\begin{thm}[{\cite[cor.~G]{Klein_compression}}, {\cite[th.\ A]{Klein_haef}}]
\label{my_thm}  Let 
$K$ have the homotopy type of a connected CW complex of dimension $\le k$ and let $P$ be a connected Poincar\'e space of dimension $d$,
with $k \le d-3$. 
Let $f\: K \to P$ be a map. Assume either of the following conditions hold:
\begin{enumerate}[(i).]
\item $f$ is $(2k-d+2)$-connected, or
\item $f$ is $(2k - d+1)$-connected and $3k+4 \le  2d$.
\end{enumerate}
Then $f\: K\to P$ underlies a Poincar\'e embedding. 
 \end{thm}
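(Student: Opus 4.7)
The plan is to follow the Haefliger-Klein strategy of converting the Poincar\'e embedding problem into an equivariant lifting problem and then applying obstruction theory in the metastable range. First I would reformulate the notion of Poincar\'e embedding homotopy-theoretically: to produce such an embedding is to produce a Poincar\'e pair $(\bar K,\partial \bar K)$ with $\bar K \simeq K$ together with a complementary Poincar\'e pair $(P_0,\partial \bar K)$ so that the pushout $\bar K \cup_{\partial \bar K} P_0$ recovers $P$ compatibly with fundamental classes. By an equivariant $S$-duality argument adapted to the Poincar\'e category, producing such data is equivalent to equivariantly lifting $f \times f\colon K \times K \to P \times P$ through the configuration space $C(P) := P \times P \setminus \Delta$, where $\Bbb Z_2$ acts by swapping factors, subject to compatibility with the Spivak normal data.

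Next I would run the obstruction theory for this lift. Poincar\'e duality for $P$ implies that $C(P) \to P \times P$ has homotopy fiber weakly equivalent to $S^{d-1}$, so the map is $(d-1)$-connected. The primary obstruction to the equivariant lift lies in an equivariant cohomology group $H^*_{\Bbb Z_2}(K \times K;\pi_*(S^{d-1}))$ with the appropriate twisted coefficients. Using $\dim K \le k$ together with a Blakers-Massey estimate for $f \times f$ restricted to the complement of the diagonal, one shows that this obstruction group is concentrated in degree $2k - d + 2$, so it vanishes whenever $f$ is $(2k - d + 2)$-connected. This yields case (i).

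For case (ii), I would exploit the stronger dimensional restriction $3k + 4 \le 2d$ to gain one unit of connectivity in the obstruction estimate. In this improved stable range, a secondary Blakers-Massey (or James splitting) argument applies to the fiberwise squaring construction, allowing a one-step compression of the primary obstruction. This reduction from $(2k - d + 2)$-connected to $(2k - d + 1)$-connected is the main improvement recorded in \cite{Klein_compression}.

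The \emph{hard part} will be the passage from a bare equivariant lift to a genuine Poincar\'e embedding structure: producing $P_0$ as a Poincar\'e space with the correct boundary and verifying compatibility of fundamental classes requires a substantial homotopy-theoretic replacement for the smooth tubular neighborhood construction, built out of Spivak fibrations and fiberwise duality. This is the technical heart of \cite{Klein_haef}; once it is in place, the metastable obstruction computation and its compression refinement proceed essentially as in the classical Haefliger-Hirsch-Wall theory.
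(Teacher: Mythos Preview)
The paper does not prove this statement; it is quoted from the author's earlier work \cite{Klein_haef}, \cite{Klein_compression} and used as a black box throughout. So there is no proof here to compare against directly.

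Your sketch follows the classical Haefliger route for smooth embeddings: reduce to a $\Bbb Z_2$-equivariant lift of $f\times f$ through the ordered configuration space $C(P)=P\times P\setminus\Delta$, then run obstruction theory. There is a genuine gap in transporting this to Poincar\'e spaces. You assert that Poincar\'e duality for $P$ forces the homotopy fiber of $C(P)\to P\times P$ to be $S^{d-1}$, but this is exactly what is not available: a Poincar\'e space has no tubular neighborhood of the diagonal, and the literal point-set complement $P\times P\setminus\Delta$ carries no a priori homotopical meaning. To even formulate your lifting problem one would first need to know that the diagonal $\Delta\colon P\to P\times P$ itself admits a Poincar\'e embedding with normal data the Spivak tangent fibration, and then replace $C(P)$ by the homotopy-theoretic complement of that embedding; but establishing the diagonal Poincar\'e embedding is of the same order of difficulty as the theorem at hand. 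Even granting a workable $C(P)$, the passage from an equivariant section to an actual Poincar\'e complement $(P_0,\partial\bar K)$ with compatible fundamental classes is not supplied by the lifting alone---in the smooth case this is where transversality and tubular neighborhoods do the work.

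This is why the cited papers do not proceed via configuration spaces. The approach of \cite{Klein_haef} works entirely in $\Top_{/P}$: one first observes that after enough unreduced fiberwise suspension $S_P$ a Poincar\'e embedding always exists (the stable existence is easy), and then proves a fiberwise desuspension/compression theorem showing that in the stated connectivity and dimension ranges the stable embedding descends. The candidate complement is built directly by fiberwise homotopy theory and the Poincar\'e pair structure comes from fiberwise duality, bypassing any configuration-space model. The improvement in \cite{Klein_compression} sharpens the compression step by one unit, yielding case (ii). Your identification of ``the hard part'' is accurate, but the mechanism that handles it is fiberwise suspension/desuspension rather than an equivariant Haefliger lift.
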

 
\begin{rem} When $P$ is smooth, then in (i) it suffices that $f$ is
$(2k-d+1)$-connected \cite[p.~76]{Wall_thickening}.
\end{rem}

 \begin{cor}\label{cor:easy} Let 
$K$ be a connected Poincar\'e space of dimension $k$ and let $P$ be a connected Poincar\'e space of dimension $d$.
Let $f\: K \to P$ be a map. Then $f$ underlies a Poincar\'e embedding if
\begin{enumerate}[(i).]
\item  $d\ge \max(5,2k+1)$, or
\item  $d \ge \max(7,2k)$ and $P$ is $1$-connected.
 \end{enumerate}
 \end{cor}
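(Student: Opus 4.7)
The plan is to deduce both (i) and (ii) directly from Theorem \ref{my_thm}(ii), which trades a weaker connectivity hypothesis on $f$ for the dimensional constraint $3k+4 \leq 2d$. Since $K$ is a homotopy finite Poincar\'e space of formal dimension $k$, I would first pass to a finite CW model of dimension $\leq k$ so that the dimensional hypothesis on the source in Theorem \ref{my_thm} is met. What then remains is to verify, in each case, the codimension bound $k \leq d-3$, the $(2k-d+1)$-connectedness of $f$, and the inequality $3k+4 \leq 2d$.

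For (i), assuming $d \geq \max(5, 2k+1)$, I would note that $k \leq (d-1)/2$, so $k \leq d-3$ follows from $d \geq 5$, and that $2k-d+1 \leq 0$ makes the connectivity requirement automatic (any map between non-empty connected spaces is $0$-connected). Substituting $k \leq (d-1)/2$ into $3k+4 \leq 2d$ reduces the latter to $d \geq 5$, which holds.

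For (ii), assuming $d \geq \max(7, 2k)$ with $P$ simply connected, I would observe that $k \leq d/2 \leq d-3$ (using $d \geq 7$), and that the long exact sequence of the homotopy fibration associated with $f$ forces the homotopy fiber to be path-connected once $\pi_1(P) = 0$ and $K$ is connected; so $f$ is $1$-connected, matching the requirement $(2k-d+1) \leq 1$. The inequality $3k+4 \leq 2d$ I would verify in two ranges: for $k \geq 4$ one has $3k+4 \leq 4k \leq 2d$, and for $k \leq 3$ one has $3k+4 \leq 13 \leq 14 \leq 2d$.

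No deep step is involved; the content is essentially the packaging of Theorem \ref{my_thm}. The observation worth flagging is that in case (i) nothing guarantees $f$ is $1$-connected, so the $(2k-d+2)$-connectedness clause of Theorem \ref{my_thm}(i) is unavailable; one must instead route through Theorem \ref{my_thm}(ii) and pay the dimensional cost $3k+4 \leq 2d$, which is exactly what the thresholds $d \geq 5$ and $d \geq 2k+1$ buy.
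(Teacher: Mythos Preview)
Your proposal is correct and follows essentially the same route as the paper: both cases are deduced from Theorem~\ref{my_thm}(ii) by checking $k\le d-3$, the appropriate connectivity of $f$, and $3k+4\le 2d$. Your added remark about replacing $K$ by a CW model of dimension $\le k$ and your explicit justification that $f$ is $1$-connected in (ii) are points the paper leaves implicit, but the overall argument is the same.
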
 
 
 \begin{proof} If $d\ge \max(5,2k+1)$ then $f$ is $(2k - d+1)$-connected since $f$ is $0$-connected.
The inequality $d\ge \max(5,2k+1)$ also implies
 $3k+4 \le 2d$ and $k\le d-3$. Then a Poincar\'e embedding exists by part (ii) of Theorem \ref{my_thm}.
 
 If $d \ge \max(7,2k)$ and $P$ is $1$-connected, then $f$ is $1$-connected and $2k-d+1 \le 1$.
So $f$ is $(2k - d+1)$-connected. Moreover, $d \ge \max(7,2k)$ guarantees that
$3k+4 \le 2d$ and $k\le d-3$. Then a Poincar\'e embedding exists by part (ii) of Theorem \ref{my_thm}.
 \end{proof}

\begin{rem}    In the smooth case (cf.~Browder \cite[p.\ 95]{Browder}), the inequality in (ii) of Corollary \ref{cor:easy}
 can be improved to $m \ge \max(5,2k)$. 
 \end{rem}
 
 We next describe an embedding theorem applicable to middle dimensional surgery.
Let $P$ be a $1$-connected Poincar\'e space of dimension $d= 2k$ with Spivak fibration $\xi$.
 Consider pairs $(g,\tau)$ in which $g\: S^k \to P$ is a map and $\tau$ is a stable trivialization of
 $g^\ast\xi$. Two such pairs $(g_0,\tau_0)$ and  $(g_1,\tau_1)$ are deemed equivalent if there is a homotopy
 $G\: S^k \times I\to P$ from $g_0$ to $g_1$ and a trivialization  of $G^\ast \xi$ extending $\tau_i$.
If $k \ge 2$, then the set of such equivalence classes is equipped with the structure of an abelian group \cite[pp.~1--2]{Klein_sphere}.
  We  denote it by $\frak F_k(P)$.
  
 Let $Q_k = \Bbb Z$ if $k$ is even and $\Bbb Z_2$ if $k$ is odd.
 
 \begin{thm}[{\cite[thm.~A]{Klein_sphere}}] \label{thm:Klein_sphere} Assume in addition $k > 3$.\footnote{This corrects the statement of \cite[thm.~A]{Klein_sphere}, where it is asserted that
 $k >2$ suffices. The  author failed to take account of the condition $3k+4\le 2d$.}
   Then there is a function
 \[
 \frak F_k(P) \to Q_k
 \]
 such that $\mu(x) = 0$ if and only if $x$ is represented by a codimension zero Poincar\'e embedding
 $S^k \times D^k \to P$. Moreover, $\mu$ satisfies
 \begin{enumerate}[(i)]
 \item $\mu(x+y) = \mu(x) + \mu(y) + \lambda(x,y)$,
 \item $\lambda(x,x)= (1+(-1)^k)\mu(x)$,
 \item $\mu(ax) = a^2\mu(x)$, for $a\in \Bbb Z$,
 \end{enumerate}
where $\lambda(x,y)$  is the value of the intersection pairing applied to the Hurewicz images of $x$ and $y$ in $H_p(M)$.
 The intersection pairing is defined as Poincar\'e dual to the cup product pairing.
  \end{thm}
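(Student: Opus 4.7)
The plan is to define $\mu$ by an unstable normal-bundle obstruction, and then to verify the quadratic refinement relations by analyzing the group operation of $\mathfrak{F}_k(P)$ under Poincaré embedding.

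\textbf{Construction of $\mu$.} Given a representative $(g,\tau)$ of $x\in \mathfrak{F}_k(P)$, the hypotheses $k>3$ and $d=2k$ imply $k\le d-3$, $3k+4\le 2d$, and $g$ is $0$-connected, so Theorem \ref{my_thm}(ii) yields a Poincaré embedding of $g$. This furnishes a Poincaré pair $(E,\partial E)$ with a homotopy equivalence $S^k\simeq E$ and a complement $(P_0,\partial E)$. Since $\partial E\to E$ is $(k-1)$-connected, it has the fiber homotopy type of a $(k-1)$-spherical fibration, classified by some $\eta \in \pi_k(BG_k)$. As in the argument of Lemma \ref{lem:preferred-triv}, the stable trivialization $\tau$ provides a preferred null-homotopy of the stabilization of $\eta$, so $\eta$ together with this null-homotopy defines a class $\widetilde\mu(x)\in \pi_k(G/G_k)$. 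By Lemma \ref{lem:unstable-stable} with $\ell=k-1$, there is a canonical isomorphism $\pi_k(G/G_k)\cong Q_k$, and we set $\mu(x)$ to be the image of $\widetilde\mu(x)$.

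\textbf{Well-definedness and the embedding criterion.} The concordance classification for Poincaré embeddings in the range of Theorem \ref{my_thm}(ii) (cf.~\cite{Klein_haef},\cite{Klein_compression}) implies that an equivalence in $\mathfrak{F}_k(P)$ between $(g_0,\tau_0)$ and $(g_1,\tau_1)$ extends to a concordance of Poincaré embeddings, so the resulting normal data produce equal elements of $\pi_k(G/G_k)$. By construction, $\mu(x)=0$ holds precisely when the $(k-1)$-spherical fibration $\partial E\to E$ is unstably trivial compatibly with $\tau$, in which case $(E,\partial E)\simeq(S^k\times D^k,S^k\times S^{k-1})$ and the Poincaré embedding refines to a codimension-zero Poincaré embedding $S^k\times D^k\to P$.

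\textbf{The quadratic relations.} Represent $x+y$ by pinching $S^k\to S^k\vee S^k$ and applying the wedge of representatives $g_x,g_y$. Using $k\ge 4$ and the uniqueness clause in Theorem \ref{my_thm}, one may arrange the Poincaré embeddings of $g_x$ and $g_y$ to have complements that meet only in a standard ``Poincaré plumbing'' neighborhood of the pinch point. A Mayer--Vietoris analysis of the normal data then expresses the obstruction for $x+y$ as the sum of the obstructions for $x$ and $y$ plus a contribution from the overlap, which is identified via duality with the value of the intersection pairing on the Hurewicz images, yielding $\mu(x+y)=\mu(x)+\mu(y)+\lambda(x,y)$. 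The identity $\mu(ax)=a^2\mu(x)$ then follows by induction on $a$ using this formula, and $\lambda(x,x)=(1+(-1)^k)\mu(x)$ drops out algebraically by comparing $\mu(2x)=4\mu(x)$ with $\mu(x+x)=2\mu(x)+\lambda(x,x)$ (noting that in the odd case one works modulo $2$).

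\textbf{Main obstacle.} The step I expect to be hardest is the additivity formula $\mu(x+y)=\mu(x)+\mu(y)+\lambda(x,y)$: in the smooth category it is proved by counting double points of a transverse self-intersection, which is unavailable here. The substitute must be a Poincaré-category plumbing construction that keeps track of normal data. Carrying out this identification, and verifying that the correction term agrees with $\lambda(x,y)$ via $S$-duality, is the technical heart of the proof.
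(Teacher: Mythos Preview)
The paper does not prove this theorem; it is quoted verbatim from \cite{Klein_sphere} (as the citation in the theorem header indicates) and used as a black box in \S\ref{sec:d-even}. So there is no proof here to compare your proposal against.

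That said, your construction of $\mu$ is the natural one and is essentially what is carried out in \cite{Klein_sphere}: Poincar\'e embed $S^k$ in $P$ using Theorem~\ref{my_thm}(ii), read off the unstable normal invariant in $\pi_k(BG_k)$, and use the stable trivialization $\tau$ to lift to $\pi_k(G/G_k)\cong Q_k$. Your identification of the embedding criterion with the vanishing of $\mu$ is correct for the same reason as in Proposition~\ref{prop:obstruction}.

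Where your sketch is genuinely incomplete is exactly where you flag it: the additivity formula (i). Your ``Poincar\'e plumbing plus Mayer--Vietoris'' description is not a proof; in particular, you have not said what structure on the normal data of the sum embedding is being decomposed, nor why the cross term is $\lambda(x,y)$ rather than some other pairing. In \cite{Klein_sphere} this is handled not by an ad hoc plumbing but by identifying $\mu$ with a stable Hopf invariant of a Pontryagin--Thom collapse, after which the quadratic law follows from the James--Milnor formula for the Hopf invariant of a sum (the same mechanism that appears later in \S\ref{sec:realization} via the square \eqref{eqn:EHP} and the map to $\{\Sigma N^+,D_2(P/\partial P)\}$). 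If you want to complete your argument along your own lines, you should recast $\mu$ in those terms; the ``overlap'' you are trying to isolate is precisely the off-diagonal summand in $D_2(S^k\vee S^k)$, and its identification with $\lambda$ comes from $S$-duality, not from a Mayer--Vietoris sequence. Also note a small slip in your derivation of (ii): from $\mu(2x)=4\mu(x)$ and $\mu(2x)=2\mu(x)+\lambda(x,x)$ you get $\lambda(x,x)=2\mu(x)$, which is the correct statement only for $k$ even; for $k$ odd you must already know $\lambda(x,x)=0$ in $\Bbb Z$ (skew-symmetry) to recover (ii), so (ii) is not a purely formal consequence of (i) and (iii) as you suggest.
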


   \subsection{The simply connected surgery obstruction}
 Let $f\: P \to Q$ be a normal map of $d$-dimensional Poincar\'e spaces, $d \ge 7$ or $d = 5$, 
 where $Q$ is $1$-connected. 
   As in the smooth case the surgery obstruction  $\sigma(f) \in L_d$  is defined as follows: 
\medskip

\noindent {\it The case $d = 2k+1$:} In this case $\sigma(f) = 0$.
\medskip

\noindent {\it The case $d = 2k$, with $k$ even:} We set 
   \[
   \sigma(f) = \tfrac{I(f)}{8} := \tfrac{I(P) - I(Q)}{8}\, ,
   \]
   where $I(P)$ is the signature of the intersection form
   \[
\lambda\: H_{k}(P;\Bbb Q) \otimes H_{k}(P;\Bbb Q) \to \Bbb Q\, .
   \]   
   To make sense of this, note that $P$ comes equipped with a preferred orientation by our definition
   of Poincar\'e space over $X$. Next, as $d \equiv 0 \mod 4$, and Poincar\'e duality holds,
 $\lambda$  is a  non-singular symmetric bilinear form.
After choosing a basis for $H_{k}(P;\Bbb Q) $, we may identify $\lambda$
a symmetric matrix $A$ with determinant $\pm 1$.
  Then $I(f)$ is defined as the index $A$, i.e., the dimension of the positive eigenspace  minus the dimension of the negative
  eigenspace. 
In addition, $I(f)$ is divisible by $8$ by \cite[thm.~III.3.9]{Browder} (note: all the results of \cite[III.3]{Browder} are stated for Poincar\'e duality spaces).
\medskip

\noindent {\it The case $d = 2k$, with $k$ odd:} There are two ways to define the obstruction in this case. One may use Browder's \cite[III.4]{Browder} which uses  
 functional Steenrod squares. 
 
 Alternatively,  a Poincar\'e embedding theoretic definition of the invariant can be given: By Theorem \ref{below_middle}
 we may assume that $f$ is $k$-connected. Observe that the surgery kernel 
 \[
 K_k(P) := \pi_{k+1}(f) = H_{k+1}(f) = \text{kernel}(H_k(P) \to H_k(Q))
 \]
maps monomorphically into $H_k(P)$. If $K_k(P)$ is trivial,
 then $f$ is an equivalence. Hence, it suffices to kill $K_k(P)$.
 
By Corollary \ref{cor:easy}, we may represent each element of $K_k(P)$ by a codimension $k$
Poincar\'e embedding  $S^k \to P$. 
 However, the normal data of the Poincar\'e embedding need not be trivial---to do surgery on this Poincar\'e embedded sphere,
 we need to represent elements of $H_{k+1}(f)$ 
 by a {\it framed} embedding, i.e., a codimension zero Poincar\'e embedding $S^k \times D^k \to P$.
 
The latter problem is addressed by noting that there is a canonical homomorphism $K_k(P) \to \frak F_k(P)$. Consequently,
By Theorem \ref{thm:Klein_sphere} we have a quadratic function
\[
 \mu \: K_k(P) \to \Bbb Z_2
\]
 with the property that $\mu(x) =0$ if and only if $x$ is represented by a codimension zero Poincar\'e embedding $S^k \times D^k \to P$.

 Set $V := K_k(P) \otimes \Bbb Z_2$. 
 Then $V$
 is a finite dimensional $\Bbb Z_2$-vector space and $\mu$  descends to a non-singular quadratic form 
\begin{equation}\label{eqn:mu}
 \mu\: V\to \Bbb Z_2\, .
\end{equation}
  We define $\sigma(f) \in \Bbb Z_2$ as the {\it Arf invariant} of $\mu$. It is equal to
  1 if and only if $\mu$ sends a majority of elements of $V$ to $1\in \Bbb Z_2$ (cf.~\cite[prop.~3.1.8]{Browder}).

   \begin{rems} 
    (1). When $d = 2k$, with $k$ odd or even,  $\sigma(f) \in Q_k$ is an invariant of Poincar\'e normal cobordism.
Here is a sketch of the argument.

Suppose first that $F\: (W,\partial W) \to (B,\partial B)$ is a Poincar\'e normal cobordism of dimension $2k+1$ such that
  the restriction $F_{|\partial W}\: \partial W\to \partial B$ is $p$-connected. Then by appealing to a relative version of Theorem \ref{below_middle},
  we may do surgery on $W$ without changing $\partial W$ to make $F\: W \to B$ $k$-connected.
 Let $\Bbb F$ be 
   $\Bbb Q$ when $k$ is even or $\Bbb Z_2$ when $k$ is odd. Then after tensoring with $\Bbb F$ one sees that
  the image of the homomorphism  $K_{k+1}(W,\partial W) \to K_k(\partial W)$ defines a Lagrangian subspace
  for the form $\mu\: K_k(\partial W)\otimes \Bbb F \to \Bbb F$. Then $\sigma(F_{|\partial B}) = 0$ by 
  \cite[props.~III.1.2, III.1.13]{Browder}.
  
Next, suppose that $f_i\: P_i \to Q$ are normal maps over $X$ which are normally cobordant. Such a normal cobordism is
is given by a normal map $F\: (W,\partial W) \to (B,\partial B)$ in which $B = Q\times I$, $\partial W = P \amalg Q$
 and $F_{|\partial W} = f_0 \amalg f_1$. By additivity, it follows that $\sigma(f_0) = \sigma(f_1)$.
 \medskip
 
   \noindent (3). The surgery obstruction of a Poincar\'e normal
   map in the simply connected case,  as defined above, coincides with Ranicki's definition of the surgery obstruction \cite{Ranicki-1}, \cite{Ranicki}
 which was was sketched in \S\ref{sec:bordism}.
 \end{rems} 
  
 \section{Proof of Theorem \ref{bigthm:fund-thm} for $d$ odd}\label{sec:d-odd}

   Let $f\: P \to Q$ be a  normal map of Poincar\'e spaces of dimension $d = 2k+1$ where $Q$ is $1$-connected and $k \ge 2$. 
    By Theorem  \ref{below_middle}, we may assume that $f$ is $k$-connected. Let $K_k(P)$ be the surgery kernel.
   
 Following the proof of  \cite[thm.~IV.3.1]{Browder},
write $K_k(P) \cong T \oplus F$, in which $F$ is finitely generated free abelian and
  $T$ is the finite torsion subgroup. Let $x\in F$ generate an infinite cyclic summand.
  By Proposition \ref{prop:obstruction} and Corollary \ref{cor:obstruction_vanishes},  $x$ is realized by a codimension zero Poincar\'e embedding 
  $S^k \times D^{k+1} \to P$.
   Then  the result of surgery on the latter results in a Poincar\'e normal map $f'\: P' \to Q$ such that the rank of  $K_k(P')$ is strictly less than
   the rank of $K_k(P)$ and the torsion subgroup is unchanged. Iterating this procedure finitely many times, 
   we may assume without loss in generality that $K_k(P) = T$ is a finite abelian group. The argument now proceeds
   in cases depending
   on the parity of $k$.

    \subsection{The case $d \equiv 1 \mod 4$}\label{sec:d-odd-p-even} In this instance $k$ is even and
the  argument proceeds exactly as in \cite[p.~104]{Browder}. 
Let $x\in K_k(P)$ generate a cyclic summand. The effect of surgery in this case results
   in a Poincar\'e normal map $f_1\: P_1 \to Q$, in which $K_k(P_1)$ has torsion group $T_1$  isomorphic to a proper subgroup of $T$.
   Then we apply the previous procedure to $f_1$ to obtain a $k$-connected normal cobordism to 
     $f_2\: P_2\to Q$ with $K_k(P_2) = T_2$. Iterating these steps, eventually one obtains a Poincar\'e  normal cobordism 
     to a map $f_j\: P_j \to Q$ where $f_j$ is  $(k+1)$-connected.
     It follows that $f\: P \to Q$ is normally cobordant to a weak equivalence.

   \subsection{The case $d \equiv 3\mod 4$}\label{sec:d-odd-p-dd}
   In this instance $k$ is odd.
Given an element of $x\in K_k(P)$ we represent it by a
 codimension zero Poincar\'e embedding  $\phi\: S^k \times D^{\ell+1} \to P$ 
where $k = \ell$. So we have
a  coCartesian square
   \[
   \xymatrix{
   S^k \times S^\ell \ar[r]\ar[d]_{\cap}& P_0\ar[d] \\
   S^k \times D^{\ell+1} \ar[r] & P\, .
   }
   \]
The trace of the surgery on $\phi$
defines a normal cobordism
 \[
  (W_\phi,P,P') \to (Q \times I,Q\times \{0\}, Q\times \{1\})
  \] 
  of the normal map $f$.

If $\omega\: S^k \to G_{\ell+1}$ is any map, then we may take its adjoint to obtain  a self-homotopy equivalence of pairs
  $\hat\omega\: (S^k \times D^{\ell+1}, S^k \times S^\ell) \to (S^k \times D^{\ell+1}, S^k \times S^\ell)$.
  Precomposing the square with $\hat \omega$, one obtains a new Poincar\'e embedding
  $\phi_{\omega}$. By  Proposition \ref{prop:obstruction},
   surgery  on $\phi_\omega$ defines a normal cobordism of $f$ if and only if
  the homotopy class of the composition 
  \[
  S^k@> \omega >>G_{\ell+1} @>>> G
  \] 
  is trivial, i.e.,  on homotopy classes  $[\omega] = \partial(j)$ for some
  $j \in \pi_{k+1}(G/G_{\ell+1})$. 
    By Lemmas \ref{lem:unstable-stable} and \ref{lem:unstable-stable-refine},  
   $\partial \: \Bbb Z \cong \pi_{k+1}(G/G_{\ell+1}) \to \pi_k(G_{\ell+1}) $
   is a monomorphism (here we remind the reader that $k = \ell$ is odd). Hence, we may identify $\omega$ with an integer $j$. 
 Summarizing, each $x\in K_k(P)$ can represented
   by countably many codimension zero Poincar\'e embeddings $\phi_j$ for $j \in \Bbb Z$, each 
 defining a normal cobordism of $f$.\footnote{More precisely, the set of concordance classes of
 codimension zero Poincar\'e embeddings representing $x$ is a $\Bbb Z$-torsor.}

 The rest of the proof follows the line of argument of  \cite[pp.~106-107]{Browder}. Namely, 
let $q$ be the largest prime that divides the order of $T = K_k(P)$.
Let $x\in K_k(P)$ be an element having non-trivial reduction mod $q$.
Then we may find a Poincar\'e embedding $\phi\: S^k \times D^{\ell+1} \to P$
such that the restriction $S^k \times \{0\} \to P$ represents $x$. Surgery on $\phi$ defines
a normal cobordism of $f$ to a normal map $f'\: P' \to Q$. 
Then $K_k(P;\Bbb Z_q) := K_k(P)\otimes \Bbb Z_q$ has the structure of a finite dimensional vector space
over the field $\Bbb Z_q$.

By straightforwardly adapting  \cite[prop.~IV.3.12]{Browder} to the Poincar\'e context, 
one may choose $\phi$ (i.e., using the  $\phi_j$  above) so that
 the torsion subgroup of $T' \subset K_k(P')$ satisfies 
 \[
 |T'| \le |T| \quad \text{ and } \quad 
 \dim K_k(P';\Bbb Z_q) < \dim K_k(P;\Bbb Z_q)\, .
 \]
If we iterate this procedure, we find that after finitely many steps that $f$ is normally cobordant to 
a $k$-connected Poincar\'e normal map $f_1\: P_1 \to Q$ such that the torsion subgroup $T_1 \subset K_k(P_1)$
satisfies $|T_1| \le |T|$ and $\dim K_k(P_1;\Bbb Z_q) = 0$. Consequently, $T_1 = K_k(P_1)$ is a torsion group 
 whose torsion is prime to  $q$
whose order is strictly smaller than the order of $K_k(P)$.

If substitute $f$ by $f_1$ and repeat  the above recipe, then after finitely many steps we obtain a normal cobordism of $f$ to a Poincar\'e normal map $f_n\: P_n \to Q$
such that $K_k(P_n)= 0$, i.e., $f$ is normally cobordant to an equivalence. \qed

 \section{Proof of Theorem \ref{bigthm:fund-thm} for $d$ even}\label{sec:d-even}
     Let $f\: P \to Q$ be a normal map of Poincar\'e spaces having dimension  $d =2k \ge 8$. Assume
     $Q$ is $1$-connected, and $\sigma(f) =0$. 
     
     We may assume that $f$ is $k$-connected. By Poincar\'e duality (cf.~\cite[I.2.6]{Browder}), it follows that
     $K_k(P)$ is a finitely generated torsion free abelian group.
     
     We consider two subcases.
     
   \subsection{The case $d \equiv 0 \mod 4$} 
   Assume  $k$ is even.  Then $0 = \sigma(f) = I(f)/8$, where $I(f)$ is the signature of the intersection form
   $\lambda$ on $K_k(P)$. Note that $\lambda(x,y) = \mu(x+y) - \mu(x) - \mu(y)$.
   Since $I(f) = 0$, $\lambda$ is indefinite. Hence, there is an indivisible
 element $x\in K_k(P)$ such that $\mu(x) = 0$ (cf.~\cite[prop.~3.1.3]{Browder}, \cite{Milnor_man-quad}).
 Then we may represent $x$ by a codimension zero Poincar\'e embedding $\phi\: S^k \times D^{k+1} \to P$. Surgery on $\phi$
produces a normal cobordism of $f$ to a $k$-connected normal map $f'\: P' \to Q$ 
such that the rank of $K_k(P')$ is strictly less than that of  $K_k(P)$. The last assertion
is a formal consequence of the commutative 
diagram
\[
{\small \xymatrix{
            & \Bbb Z \ar[d]_{[S^k \times \{0\}]}  \ar[dr]^{n \mapsto nx} \\
0 \ar[r] & H_k(P_0) \ar[r] \ar[d] & H_k(P) \ar[r]^(.6){x\cdot}  & \Bbb Z  \ar[rr]^(.4){[0 {\times} S^{p-1}]} && H_{k-1}(P_0) \ar[r] & H_{k-1}(P) \ar[r] & 0\\
 & H_k(P') \ar[d] \\
& 0
}}
\]
in which the vertical and horizontal sequences are exact. This diagram 
arises by applying homology to the diagram \eqref{eqn:cofibrations} with $M = P$ 
 (cf.~\cite[cor.~IV.2.1]{Browder}, \cite[p.~527]{Kervaire-Milnor}). 

Since $f$ and $f'$ are normally cobordant, we have $I(f') = 0$. Iterating, after a finite number of steps
we obtain a Poincar\'e normal map $f_1\: P_1 \to Q$ with trivial $K_k(P_1)$. Hence, the normal map $f_1$ is an equivalence.
   
  \subsection{The case $d \equiv 2 \mod 4$} 
Let $d = 2k$ with $k$ odd. Assume $\sigma(f) = 0$ and recall that $\sigma(f)$ is the Arf invariant of the quadratic form 
$\mu\: K_k(P;\Bbb Z_2) \to \Bbb Z_2$. 

Recall that a symplectic basis for a  nonsingular skew symmetric bilinear form $\lambda\: V \otimes V\to \Bbb Z_2$ 
consists of a basis for $V$ with basis elements
$(e_i,f_i)$ with $i = 1,\dots r$ such that 
\begin{equation} \label{eqn:symplectic-arf}
\lambda(e_i,e_i) = 0 = \lambda(f_i,f_i), \quad \lambda(e_i,f_j) = \delta_{ij}\, .
\end{equation}
If $\lambda$ has a refinement to a quadratic form  $\mu \: V\to \Bbb Z_2$
in the sense that  $\lambda(x,y) = \mu(x+y) + \mu(x) + \mu(y)$, then
it is an algebraic fact that the Arf invariant of $\mu$ is given by 
\[
\sum_{i=1}^\ell \mu(e_i)\mu(f_i) \, .
\]
Moreover, the Arf invariant is trivial if and only if one can
find a symplectic basis for $V$ satisfying $\mu(e_i) = \mu(f_i) = 0$, for  $i = 1,\dots,r$.

In our case, $\lambda$ is the intersection form on $K_k(P;\Bbb Z_2)$, and $\mu\: K_k(P;\Bbb Z_2) \to \Bbb Z_2$
is the quadratic form \eqref{eqn:mu}. Since $\sigma(f) = 0$, there is indeed a symplectic basis
$\{e_i,f_i\}$ of $K_k(P;\Bbb Z_2)$ such that $\mu(e_i) = \mu(f_i) = 0$.
The argument now proceeds just as in \cite[lem.~8.4]{Kervaire-Milnor}:
We represent $e_r$ by a Poincar\'e embedding $\phi\: S^k \times D^k \to P$
and do surgery on it to obtain a cobordism to a normal map $f_1\: P' \to Q$.
Then $K_k(P';\Bbb Z_2)$ admits a symplectic basis of the form
$(e_i,f_i)$ with $i = 1,\dots r-1$ which again satisfies condition \eqref{eqn:symplectic-arf}.
It follows that $\sigma(f') = 0$. Iterating the procedure a further $r-1$ times, we obtain
a normal cobordism to a Poincar\'e normal map $f'\: P_1\to Q$ with $K_k(P') = 0$.\qed

\section{Wall realization} \label{sec:realization}

The goal of this section is to prove a Poincar\'e space analog of the Wall realization theorem in even dimensions \cite[thm.~5.8]{Wall}. 
The odd dimensional case is deferred to another paper.

Let $N$ be a connected Poincar\'e space of dimension $d-1$ with $d =2k  \ge 8$.
Let $\sigma \in L_d(\pi,w)$ be any element, where $\pi = \pi_1(N), w = w_1(\xi)$.

\begin{thm}\label{thm:Wall_realization} 
With respect to these assumptions, there is a Poincar\'e cobordism
$(W,\partial W)$  of dimension $d$
with $\partial W = \partial_0 W\amalg \partial_1 W$,
$\partial_0 W = N$, and a normal map 
\[
f\: (W,\partial_0 W,\partial_1 W)  \to (N \times I,N\times \{0\},N\times \{1\})
\]
such that $\partial_0 W \to N\times \{0\}$ is the identity and 
$\partial_1 W \to N\times \{1\}$ is a homotopy equivalence.
Furthermore, the surgery obstruction of $f$ rel $\partial W$ is equal to $\sigma$.
\end{thm}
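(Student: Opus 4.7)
The plan is to adapt the classical Wall realization construction \cite[\S5.8]{Wall} to the Poincar\'e category, replacing smooth embeddings with the Poincar\'e embeddings of Section~\ref{sec:toolbox} and smooth handle attachment with Poincar\'e handle attachment.

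First, I represent $\sigma \in L_d(\pi, w)$ by an explicit non-singular $(-1)^k$-quadratic form $(K, \lambda, \mu)$ on a finitely generated free $\Bbb Z[\pi]$-module $K$ with a chosen basis $e_1,\ldots, e_r$. The geometric input is a collection of pairwise disjoint codimension zero Poincar\'e embeddings $\phi_i \colon S^{k-1} \times D^k \hookrightarrow N \times \{1\}$ with null-homotopic attaching spheres, whose framings and mutual linking encode $\mu(e_i)$ and $\lambda(e_i, e_j)$ respectively. Existence of each $\phi_i$ follows from Corollary~\ref{cor:easy}(i) applied to a null-homotopic map $S^{k-1} \to N \times \{1\}$, valid since $\dim N = d - 1 \geq 7$ and $k - 1 \leq d - 4$; pairwise disjointness is automatic in codimension $\geq k \geq 4$. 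The framing adjustments realizing the self-intersection numbers $\mu(e_i)$ come from the obstruction analysis of Proposition~\ref{prop:obstruction} together with Lemmas~\ref{lem:preferred-triv} and~\ref{lem:unstable-stable}, while nontrivial pairwise linking classes are produced by additional Poincar\'e embeddings as in Wall's original construction.

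With the $\phi_i$ in hand, I form the Poincar\'e cobordism $W$ by attaching Poincar\'e $k$-handles $(D^k \times D^k, S^{k-1} \times D^k)$ to $N \times I$ along the $\phi_i$. The pair $(W, \partial W)$ is then Poincar\'e of dimension $d$ with $\partial_0 W = N$; set $\partial_1 W := N'$. The normal map $f \colon (W, \partial W) \to (N \times I, N \times \partial I)$ is defined to be the identity on $N \times I$ and to factor each handle through the chosen null-homotopy of its attaching sphere. Compatibility of $f$ with Spivak fibrations is then Proposition~\ref{prop:obstruction}, since the framings of the $\phi_i$ were chosen so that the stable trivialization of Lemma~\ref{lem:preferred-triv} lifts through $G/G_k \to BG$ in the prescribed way.

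The main obstacle is to verify that $\sigma(f) = \sigma$ and that $\partial_1 W \to N \times \{1\}$ is a weak equivalence. I would do this by unraveling Ranicki's definition of the surgery obstruction (Section~\ref{sec:bordism}): the algebraic mapping cone of the umkehr $f^!$ is concentrated in degree $k$ and canonically identified with $K$ via the handle cores, and the quadratic refinement computed from $f$ matches $(\lambda, \mu)$ because the intersection and self-intersection data were prescribed by the framings and linking in the preceding step. Non-singularity of $(\lambda, \mu)$ then implies, via Poincar\'e--Lefschetz duality on $(W, \partial W)$, that the surgery kernel on $\partial_1 W$ vanishes, so $\partial_1 W \to N$ is a weak equivalence. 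The most delicate bookkeeping lies in matching the algebraic quadratic structure on $C_*(f^!)$ with the geometrically prescribed $\mu$ over the possibly nontrivial group ring $\Bbb Z[\pi]$; here the $\pi$-equivariant $S$-duality of \cite[\S3]{Ranicki} together with the refined framing analysis behind Lemma~\ref{lem:unstable-stable-refine} carry the argument.
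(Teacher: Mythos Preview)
Your outline follows Wall's scheme at the right level of generality, but it papers over precisely the step where the Poincar\'e setting diverges from the smooth one: prescribing the mutual linking $\lambda(e_i,e_j)$ and the self-intersections $\mu(e_i)$ of the attaching spheres. In the smooth case Wall achieves this by isotopy and general position; neither is available for Poincar\'e spaces. Your assertion that ``pairwise disjointness is automatic in codimension $\ge k\ge 4$'' is unjustified---Corollary~\ref{cor:easy} embeds a single connected $S^{k-1}$, not a disjoint union, and there is no general-position argument to separate or re-link Poincar\'e embeddings after the fact. Likewise, Proposition~\ref{prop:obstruction} and Lemmas~\ref{lem:unstable-stable}--\ref{lem:unstable-stable-refine} concern the normal data of a \emph{single} embedded sphere for the purpose of doing surgery on it; they do not give a mechanism for adjusting the linking of several handles relative to one another, which is what encodes the off-diagonal part of the form.

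The paper handles this differently. It first produces a single ``trivial'' codimension-zero Poincar\'e embedding of the entire $P=\amalg_r\, S^{k-1}\times D^k$ into $N$ by composing standard embeddings through nested disks. The linking is then imposed homotopy-theoretically: using the $0$-Cartesian square relating the Poincar\'e embedding and immersion spaces $E(P,N)$ and $\cal I(P,N)$ \cite[thm.~B]{Klein_compression}, together with the stable EHP sequence, one obtains a surjection
\[
\pi_0\bigl(E\cal I(P,N)\bigr)\;\twoheadrightarrow\;\{\Sigma N^+,D_2(P/\partial P)\}\;\cong\;(\Bbb Z^r\otimes\Bbb Z^r)_{\Bbb Z_2}\, ,
\]
and the right-hand side is identified with the set of simple $(-1)^k$-Hermitian forms of rank $r$. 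Thus a Poincar\'e embedding with the prescribed $(\lambda,\mu)$ is produced by lifting through this Hopf-invariant map, not by moving spheres around inside $N$. This is the missing idea in your proposal; once it is in place, the handle attachment and the identification of $\sigma(f)$ proceed as you indicate.
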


\begin{proof} 
We will follow Wall's scheme
\cite[th.\ 5.8]{Wall}. The main difference here is that
linking numbers are to be prescribed homotopy theoretically using the stable Hopf invariant of a Pontryagin-Thom construction.

First assume that $\pi$ is trivial. Recall that a {\it simple $(-1)^k$-Hermitian form} of rank $r$ over $\Bbb Z$
is a triple $(\Bbb Z^r,\lambda,\mu)$ in which $\lambda\: \Bbb Z^r \otimes \Bbb Z^r \to \Bbb Z$ is a non-singular $(-1)^k$-symmetric bilinear
form and $\mu\: \Bbb Z^r \to \Bbb Z$ is a quadratic refinement of $\lambda$ \cite[p.~47-48]{Wall}.
\medskip

\noindent{\it Step one:}
By Wall's definition of $L_d$ \cite[p.~49]{Wall}, the element $\sigma$ is represented by a $(-1)^k$-Hermitian form over $\Bbb Z$, 
say, of rank $r$. 

 Let $T \subset \text{int}(D^{2k-1})$ be a finite subset of cardinality $r$.
Let 
\[
P = T\times S^{k-1} \times D^k\, .
\] Then $P$ consists of $r$ disjoint copies of $S^{k-1} \times D^k$.
The goal of the first step to find a ``trivial'' codimension zero Poincar\'e embedding of $P\to N$.

To achieve this, take $S^{k-1} \times D^k \subset \partial(D^k \times D^k) = S^{2k-1}$. By removing
a small open disk in $S^{2k-1}$,  one obtains a Poincar\'e embedding $S^{k-1} \times D^k \to D^{2k-1}$. Taking
 $r$ disjoint copies of the latter yields a Poincar\'e embedding $P\to T\times D^{2k-1}$. 
Then we have a Poincar\'e embedding $T\times D^{2k-1} \to D^{2k-1}$, 
where each component $\{t\} \times D^{2k-1} \to D^{2k-1}$ is identified with the restriction of the identity map
of a small disk containing the point $t$. We also have a Poincar\'e embedding $D^{2k-1} \to N$.\footnote{A Poincar\'e embedding  $D^{2k-1}\to N$ can be obtained from a top cell decomposition of $N$. 
Alternatively, one may use Theorem \ref{my_thm} to Poincar\'e embed  a point in $N$.}
We now take the composition of the Poincar\'e embeddings  $P \to T\times D^{2k-1} \to D^{2k-1} \to N$ to complete the step.
  \medskip

\noindent {\it Step two:} The next step is to modify the Poincar\'e embedding $P \to N$
to incorporate  linking that  arises from simple Hermitian form data.

Let $E(P,N)$ denote the space of Poincar\'e embeddings of $P$ in $N$ (cf.\ \cite[ex.\ 2.15]{GK}). 
Let ${\cal I}(P,N)$ denote the
space of Poincar\'e immersions of $P$ in $N$. The latter can be defined as the space of Poincar\'e embeddings
$P \times D^j \to N \times D^j$ as $j$ tends to infinity  \cite{Klein_immersion}. The Pontryagin-Thom construction assigns to a
Poincar\'e  embedding its collapse map $N^+ \to P/\partial P$ which maps the fundamental class for 
$N$ to the fundamental class for $(P,\partial P)$ (here $N^+$ denotes $N$ with a disjoint basepoint).
Similarly, the Pontryagin-Thom construction associates to a Poincar\'e immersion a stable collapse map
$N^+ \to  P/\partial P$.
We obtain in this way a commutative square
\begin{equation}\label{eqn:EHP}
\xymatrix{
E(P,N) \ar[r] \ar[d] & F(N^+,P/\partial P) \ar[d] \\
\cal I(P,N) \ar[r] & F(N^+,Q(P/\partial P))\, ,
}
\end{equation}
where the spaces on the right side are function spaces of based maps, and 
$Q = \Omega^\infty\Sigma^\infty$. If $k \ge 4$, then
the square  is $0$-Cartesian \cite[th.\ B]{Klein_compression}.
Using the given Poincar\'e embedding $P \to N$,  the square
becomes a diagram of based spaces.
Using the stable EHP sequence 
\[
P/\partial P \overset{E}\to Q(P/\partial P) \overset {H}\to Q(D_2(P/\partial P))\,\,
\] 
which is a fibration in the metastable range (\cite{Mi}; i.e., the map from $P/\partial P$ to the homotopy fiber
of the Hopf invariant $H$ is $(3k-2)$-connected), one infers that there is a
$(k-1)$-connected map from the  homotopy fiber of the right vertical map of the square
to the function space $F(\Sigma (N^+),QD_2(P/\partial P))$. Consequently, if $k \ge 4$, we obtain a surjection
\begin{equation} \label{eqn:fiberwise-normal}
\pi_0(E\cal I(P,N)) \to \{\Sigma (N^+),D_2(P/\partial P)\}\, ,
\end{equation}
where $E\cal I(P,N)$ denotes the homotopy fiber of $\cal E(P,N) \to \cal I(P,N)$ at the basepoint,
and $\{\Sigma N^+,D_2(P/\partial P)\}$ is the abelian group of  homotopy classes of stable maps
$\Sigma (N^+)\to D_2(P/\partial P)$.
We will identify the target of \eqref{eqn:fiberwise-normal}.

Using a top cell decomposition of $N$ arising from Poincar\'e duality,
we have $N = N_0 \cup_{\beta} D^{2k-1}$ where $\beta\: S^{2k-2} \to N_0$ is the attaching map
to a space $N_0$ having the homotopy type of a CW complex of dimension $\le 2k-3$.
 As $D_2(P/\partial P)$ is $(2k-1)$-connected, elementary obstruction theory shows that the collapse map $ N^+ \to N/N_0 = S^{2k-1}$ induces an isomorphism
\[
\pi_{2k}^{\text{st}}(D_2(P/\partial P)) @> \cong >> \{\Sigma (N^+),D_2(P/\partial P)\} \, .
\]
By the Hurewicz theorem, $\pi_{2k}^{\text{st}}(D_2(P/\partial P)) \cong H_{2k}(D_2(P/\partial P))$.

Observe that $P/\partial P = \vee_T (S^{k-1}_+ \smsh S^k) \simeq \vee_T (S^{2k-1} \vee S^k)$. Let $\vee_T(S^k) \to P/\partial P$ be the inclusion.
Then $D_2(\vee_T S^k) \to D_2(P/\partial P)$ is $(3k-2)$-connected. Furthermore, if we identify $T$ with the ordered set $\{1,\dots , r\}$, then we obtain a splitting
\[
D_2(\underset{T}\vee S^k) \simeq \underset{i=1}{\overset{r}{\B{\bigvee}{.75}}} D_2(S^k) \,\, \vee \,\, \underset{i<j}{{\B{\bigvee}{.75}}}  S^{2k}\, .
\]
Taking homology, we find that the target of \eqref{eqn:fiberwise-normal} coincides with the abelian group
\[
H_{2k}(D_2(\underset{T}\vee S^k)) \,\,  \cong \,\,  \underset{i=1}{\overset{r}{\B{\bigoplus}{.75}}}H_{2k}(D_2(S^k)) \,\, \oplus \,\,  \underset{i<j}{{\B{\bigoplus}{.75}}} H_{2k}(S^{2k}) \\
\cong 
(\Bbb Z^r \otimes \Bbb Z^r)_{\Bbb Z_2}\, ,
\]
in which the last displayed term is the abelian   group of coinvariants of
 the involution on 
$\Bbb Z^r \otimes\Bbb Z^r$  given by
$(-1)^kT$, where $T$ is the map which switches factors.

Let $e_1,\dots,e_r\in \Bbb Z^r$ denote the standard basis.
Let $\phi\in (\Bbb Z^r \otimes \Bbb Z^r)_{\Bbb Z_2}$ be an element. Then any 
lift $\hat \phi \in \Bbb Z^r \otimes \Bbb Z^r$ of $\phi$
is necessarily of the form
\[
\sum_{ij} n_{ij} e_i \otimes e_j\, .
\]
Set $\lambda(e_i,e_j) = (n_{ij}+(-1)^kn_{ji})$. Then $\lambda$ extends bilinearly
to a $(-1)^k$-symmetric form $\Bbb Z^r\otimes \Bbb Z^r \to \Bbb Z$.
A quadratic refinement $\mu\: \Bbb Z^r \to \Bbb Z$ of $\lambda$ is determined by setting
$\mu(e_i) = n_{ii}$. We have therefore extracted a simple $(-1)^k$-Hermitian form of rank $r$ over $\Bbb Z$ from the element
$\phi$. Moreover, it is clear that the recipe defines a bijection between the elements
of  $(\Bbb Z^r \otimes \Bbb Z^r)_{\Bbb Z_2}$ and simple $(-1)^k$-Hermitian forms of rank $r$ over $\Bbb Z$.

We infer that simple $(-1)^k$-Hermitian forms of rank $r$ over $\Bbb Z$
are in bijection with 
elements of $\{\Sigma N^+,D_2(P/\partial P)\}$.
Hence, if $(\Bbb Z^r,\lambda,\mu)$ is given, then
there is an element of $\pi_0(E\cal I(P,N))$ giving rise to it. An unraveling of the definition of $E \cal I(P,N)$ shows that
the element in question is represented by a Poincar\'e embedding of $P \times \{1\} \to N \times \{1\}$ together
with an extension of it to a Poincar\'e immersion of $P \times I \to N \times I$ that restricts to the basepoint
embedding $P \times \{0\} \to N \times \{0\}$.  Using the Poincar\'e embedding $P \times \{1\} \to N \times \{1\}$, we attach
 $r$ handles of index $k$ along $P$ to obtain
\[
W =  (N \times I) \cup_{P \times \{1\}} (\underset{r}\amalg(D^k \times D^k))\, .
\]
This defines the cobordism $W$. The rest of the argument follows
\cite[p.~55]{Wall}.
This completes the case when $\pi$ is trivial.

If $\pi$ is non-trivial, then the above argument generalizes by lifting to universal covers
and working equivariantly. In this instance, $\sigma$ is considered as lying in the abelian group
\[
\{\Sigma (\tilde N^+), D_2(\tilde P/\partial \tilde P)\}_\pi\, ,
\]
namely, the equivariant homotopy classes of stable $\pi$-maps $\Sigma(\tilde N^+) \to  D_2(\tilde P/\partial \tilde P)$. 
One may identify the latter with the simple $(-1)^k$-Hermitian forms over 
$\Bbb Z[\pi]$ of rank $r$.
This gives the  proof when $\pi$ is arbitrary. \end{proof}

\section{Poincar\'e thickenings}\label{sec:thick}

The purpose of this section is to provide some details
of the theory of Poincar\'e duality space thickenings in the stable range. The main result 
corresponds to what is classically known in the smooth case
\cite{Wall_thickening}. 
Corollary \ref{cor:thickening_stable_range} below is crucial to the proof of Proposition \ref{prop:t-onto}
and therefore to the proof of Theorem \ref{bigthm:stronger-exactness}.

Let $X$ be a homotopy finite space.
 A {\it Poincar\'e thickening} of $X$ of dimension $d$ is
a homotopy finite space $P$ equipped with a map $f\:P \to X$ such that
the following conditions are satisfied:
\begin{enumerate}[(i).]
\item (Duality). The pair $(\bar X,P)$ is a  Poincar\'e pair of
dimension $d$, where $\bar X$ is the mapping cylinder of $f$.
\item (Weak transversality). If $X$ has the homotopy type of a CW complex of dimension $\le s$, then the map $f$ is $(d-s-1)$-connected.
\end{enumerate}
In what follows, we abbreviate terminology and call  $f$ a {\it Poincar\'e $d$-thickening}.
We may think of $f$ as an object of $\Top_{/X}$. 

Two Poincar\'e thickenings $f_0 \: P_0 \to X$ and $f_1\: P_1 \to X$ 
of the same dimension are {\it concordant} if there is a finite chain
of weak equivalences between $f_0$ and $f_1$ in $\Top/X$.

\begin{rem} A variant definition of Poincar\'e $d$-thickening of $X$ is specified by
 a triple $(V,\partial V, h)$ in which $(V,\partial V)$ is a $(d-s-1)$-connected
Poincar\'e pair of dimension $d$ and 
$h\: X @>>> V$ is a homotopy equivalence. This definition emphasizes the idea that 
a Poincar\'e thickening is a kind of ``Poincar\'e regular neighborhood'' of $X$.\footnote{This accounts for the weak transversality condition (ii), since in the manifold case
the map $\partial V \to V$ is $(d-s-1)$-connected by transversality.}
Up to a suitable notion of concordance, the two definitions coincide.
\end{rem}

Let $\cal T_d(X)$ the set of concordance classes of Poincar\'e $d$-thickenings.
If $X$ and $Y$ are homotopy equivalent, then it is easily seen that
$\cal T_d(X) \cong \cal T_d(Y)$.
Unreduced fiberwise suspension defines a {\it stabilization
map} $S_X \: \cal T_d(X) \to \cal T_{d+1}(X)$.
Let $\cal T_\infty(X)$ be the colimit of $\cal T_d(X)$ with respect to $S_X$.
Then $\cal T_\infty(X)$ has a preferred basepoint, the {\it Euclidean thickening} of $X$, which is the unique
stable thickening having a trivial Spivak fibration.\footnote{By implementing the dualizing spectrum of \cite{Klein_dualizing}, the Euclidean thickening
of $X$ may be constructed without resorting to manifolds.}

Let $\xi$ be a $(k-1)$-spherical fibration over $X$. If $P \to X$ is a Poincar\'e $d$-thickening, 
then one can form $\Sigma^\xi P =$ the fiberwise join of $\xi$ with $P$ over $X$. This is given
by taking the homotopy pushout of the diagram 
\[
P \leftarrow S(\xi) \times_X P \to S(\xi)\, ,
\] where
$S(\xi)$ is the total space of $\xi$. Then $\Sigma^\xi P \to X$ is a Poincar\'e $(d+k)$-thickening.

Let $BG$ be the classifying space for stable spherical fibrations over $X$. Then 
the join operation gives $BG$ the structure of a group-like homotopy commutative
topological monoid, so $[X,BG]$ has the structure of an abelian group. The fiberwise
join is then seen as a free and transitive action
\[
[X,BG] \times \cal T_\infty(X) \to \cal T_\infty(X) \, .
\]
Consequently,

\begin{lem} The orbit of the basepoint of $\cal T_\infty(X)$ gives an isomorphism
\[[X,BG] \,\, \cong \,\, \cal T_\infty(X)\, .\]
\end{lem}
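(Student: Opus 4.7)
The plan is to construct an explicit inverse to the orbit map and thereby verify that the action of $[X,BG]$ on $\cal T_\infty(X)$ asserted above is free and transitive. Define
\[
\Phi\: \cal T_\infty(X) \to [X,BG]
\]
by sending a stable Poincar\'e thickening $f\: P \to X$ to the (stabilized) Spivak fibration of the Poincar\'e pair $(\bar X, P)$, transported to $X$ via the homotopy equivalence $X \simeq \bar X$. Well-definedness on concordance classes is immediate from the essential uniqueness of Spivak fibrations.

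First I would verify that $\Phi$ is a left inverse of the orbit map $\xi \mapsto \Sigma^\xi P_{\text{Euc}}$. Since $P_{\text{Euc}}$ has trivial Spivak fibration by definition, it suffices to check the additivity formula $\Phi(\Sigma^\xi P) = \xi + \Phi(P)$ for any stable thickening $P$. This follows from the identification of Thom spectra under fiberwise join: the Thom complex of the pair $(\bar X, \Sigma^\xi P)$ is obtained from that of $(\bar X, P)$ by fiberwise smashing with the Thom space of $\xi$, and the fundamental classes transport compatibly, which pins down the Spivak data.

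The hard step will be showing $\Phi$ is a right inverse, i.e., that any stable Poincar\'e thickening $P \to X$ with Spivak fibration $\xi$ is concordant to $\Sigma^\xi P_{\text{Euc}}$. This is the Poincar\'e analog of Wall's stable classification of smooth thickenings \cite{Wall_thickening}. My plan is to argue by induction on a CW decomposition of $X$, using the Poincar\'e embedding existence and uniqueness results of \S\ref{subsec:embedding} (in particular Theorem \ref{my_thm}) to reduce, cell by cell in the stable range, to the statement that a codimension zero Poincar\'e embedding of a fixed CW complex with prescribed normal fibration data is unique up to concordance. Alternatively, one can construct $P_{\text{Euc}}$ directly via the dualizing spectrum of \cite{Klein_dualizing} and obtain the comparison map without manifold input.

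The main obstacle will be this stable uniqueness step; everything else is formal. Once $\Phi$ is known to invert the orbit map, transitivity and freeness of the action follow automatically, and compatibility of the fiberwise join with the group-like homotopy commutative monoid structure on $BG$ upgrades the bijection to an isomorphism of abelian groups.
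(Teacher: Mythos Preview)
The paper's own argument is the single word ``Consequently'': the lemma is recorded as a formal consequence of the sentence immediately preceding it, which asserts (without proof) that the fiberwise join gives a free and transitive action of $[X,BG]$ on $\cal T_\infty(X)$. Once that is granted, the orbit map from the basepoint is automatically a bijection. Your proposal goes further and attempts to prove the free-and-transitive claim itself via the Spivak-fibration map $\Phi$, which is the right idea. One minor correction: the additivity formula should read $\Phi(\Sigma^\xi P) = \Phi(P) - \xi$ rather than $+\xi$ (compare the paragraph preceding Corollary~\ref{cor:thickening_stable_range}, where the thickening corresponding to $-\xi$ has Spivak fibration $\xi$), so $\Phi$ is minus the inverse of the orbit map; this does not affect the argument. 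Your ``hard step''---that a stable thickening is determined up to concordance by its Spivak fibration---is precisely the transitivity assertion, and the paper does not supply a proof of it either: the footnote points to the dualizing-spectrum construction of \cite{Klein_dualizing}, and the smooth prototype is \cite{Wall_thickening}. Your CW-induction plan using the Poincar\'e embedding uniqueness results is in the spirit of the paper's proof of Proposition~\ref{prop:suspension} and is a reasonable route to filling this in.
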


\begin{prop} \label{prop:suspension} Assume that $X$ has the homotopy type of a connected CW complex
of dimension $\le s$ with $d \ge \max(s+3,2s)$. Then 
\[
S_X\: \cal T_d(X) \to \cal T_{d+1}(X)
\]
is onto if $d\ge \max(s+3,2s)$ and is a bijection if $d \ge \max(s+3,2s+1)$.
\end{prop}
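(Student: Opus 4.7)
The plan is to follow Wall's thickening theorem \cite{Wall_thickening} in the Poincar\'e setting, replacing the Whitney embedding theorem with the Poincar\'e embedding theorem (Theorem \ref{my_thm}).

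\emph{Surjectivity.} Let $f\:P \to X$ represent a class in $\cal T_{d+1}(X)$; then $P$ is a closed Poincar\'e space of dimension $d$ and, by weak transversality, $f$ is $(d-s)$-connected. Obstruction theory produces a homotopy section $h\:X \to P$ of $f$: the obstructions live in $H^i(X;\pi_{i-1}(\mathrm{hofib}(f)))$ for $i \le s$ and vanish because $\pi_j(\mathrm{hofib}(f))=0$ for $j < d-s$, while our hypothesis gives $s \le d-s$. Since $f\circ h \simeq \mathrm{id}_X$, the section $h$ is automatically $(d-s-1)$-connected. We then apply Theorem \ref{my_thm} to $h$ with $k=s$, regarding $P$ as a closed Poincar\'e space of dimension $d$. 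The bound $d \ge s+3$ gives $k \le d-3$, and a short numerical check shows that the hypothesis $d \ge \max(s+3,2s)$ is exactly what is needed to satisfy condition (i) (when $s \ge 3$, using $d-s-1 \ge 2s-d+2$) or condition (ii) (when $s \in \{1,2\}$, using $3s+4 \le 2d$) of Theorem \ref{my_thm}. The outcome is a Poincar\'e embedding of $X$ in $P$, i.e., a coCartesian square
\[
\xymatrix{
\partial \bar X \ar[r] \ar[d] & P_0 \ar[d] \\
\bar X \ar[r] & P
}
\]
in which $\bar X \simeq X$, $(\bar X,\partial \bar X)$ and $(P_0,\partial \bar X)$ are Poincar\'e pairs of dimension $d$, and the inclusion $\partial \bar X \to \bar X$ is $(d-s-1)$-connected.

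\emph{Identifying the complement.} To conclude, we must show that the composition $P_0 \hookrightarrow P \overset{f}{\to} X$ is a weak equivalence, so that $P \simeq \bar X \cup_{\partial \bar X} \bar X = S_X(\partial \bar X)$. This is the key step. Mayer--Vietoris for the pushout $P = \bar X \cup_{\partial \bar X} P_0$, combined with the known high connectivity of $f\:P \to X$, of $\bar X \to X$, and of $\partial \bar X \to X$, forces $P_0 \to X$ to be a homology isomorphism through the relevant range; Poincar\'e--Lefschetz duality for $(P_0,\partial \bar X)$ then controls the remaining dimensions, and a Whitehead-type argument (carried out on universal covers to handle $\pi_1$) promotes this to a weak equivalence $P_0 \simeq X$. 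Once established, the map $\partial \bar X \to X$ is a Poincar\'e $d$-thickening whose fiberwise suspension destabilizes $P$, proving surjectivity.

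\emph{Injectivity.} When $d \ge 2s+1$ the obstructions to homotopy uniqueness of the section $h$ lie in $H^i(X;\pi_i(\mathrm{hofib}(f)))$ for $i \le s$ and now also vanish, and the Poincar\'e embedding furnished by Theorem \ref{my_thm} is unique up to concordance by \cite{Klein_compression}. These two uniqueness statements imply that the assignment $[P] \mapsto [\partial \bar X]$ is a well-defined inverse to $S_X$ on concordance classes, giving injectivity in this range. The main obstacle throughout is the complement identification $P_0 \simeq X$: while geometrically transparent (in the smooth analogy, a section of a sphere bundle exhibits the complement as a disk bundle, hence homotopy equivalent to the base), in the Poincar\'e category it does not follow from Theorem \ref{my_thm} alone and requires the duality structure inherited from the ambient thickening $f$ together with the codimensional slack built into $d \ge 2s$.
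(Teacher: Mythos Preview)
Your surjectivity argument follows the paper's approach exactly: take a section $g\: X \to P$, Poincar\'e embed it via Theorem~\ref{my_thm}, and use the boundary $\partial \bar X \to X$ as the desuspended thickening. You are also right to flag the identification $P_0 \simeq X$ as the crux---the paper simply asserts that the resulting $d$-thickening ``stabilizes to the element represented by $f$'' without justification.

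However, your sketch for this step is not quite sufficient. Mayer--Vietoris together with the connectivity of $f$, of $\bar X \to X$, and of $\partial \bar X \to X$ only yields $H_i(P_0) \cong H_i(X)$ for $i < d-s-1$; at the edge of the range ($d = 2s$) this leaves degrees $s-1$ and $s$ uncontrolled, and Poincar\'e--Lefschetz duality for $(P_0, \partial \bar X)$ alone does not close the gap. The point is that you are using only the \emph{connectivity} of $f$, not the full Poincar\'e pair structure on $(M_f, P)$ coming from the thickening. The clean argument runs as follows: the decomposition $P = \bar X \cup_{\partial \bar X} P_0$ makes $(M_f; \bar X, P_0)$ a Poincar\'e triad of dimension $d+1$, and triad duality gives
\[
H^*(M_f, \bar X; \cal B) \,\cong\, H_{d+1-*}(M_f, P_0; \cal B \otimes L)
\]
for every local coefficient system $\cal B$. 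Since $g$ is a section, the inclusion $\bar X \hookrightarrow M_f$ is a homotopy equivalence, so the left side vanishes identically; hence $P_0 \hookrightarrow M_f \simeq X$ is a weak equivalence.

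For injectivity, the paper simply invokes \cite{Klein_haef2}, described there as the \emph{relative} version of the embedding theorem of \cite{Klein_haef}; this is the source for concordance uniqueness of the Poincar\'e embedding in the stated range. Your citation of \cite{Klein_compression} is for the existence statement (Theorem~\ref{my_thm}), not uniqueness. Your outline---unique section up to homotopy, unique embedding up to concordance, hence a well-defined inverse to $S_X$---is the correct strategy, but make sure the reference you invoke actually supplies uniqueness in the range $d \ge \max(s+3, 2s+1)$.
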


\begin{proof} Assume $d\ge \max(s+3,2s)$ and let
 $f\: P \to X$ represent an element of $\cal T_{d+1}(X)$. Then $f$
is $(d-s)$-connected. Consequently, $f$ has a section up to 
homotopy $g\: X \to P$. Then $g$ is $(d-s-1)$-connected. In particular, $g$ is $2$-connected.
By \cite[thm.~A]{Klein_haef} (or Theorem \ref{my_thm}), we infer that
$g$ is represented by a Poincar\'e embedding, i.e., there is a $(d+1)$-dimensional Poincar\'e
pair $(P_0,\partial P_0)$ and a  map $h:\partial P_0 \to X$ 
such that $g$ extends to a  homotopy equivalence
$X \cup_{\partial P_0} P_0\overset{\sim}\to P$.  The map $h$ then represents an element of 
$\cal T_d(X)$ which stabilizes to the element
represented by $f\: P \to X$. 

If $d \ge \max(s+3,2s+1)$, then the injectivity of $S$ is a straightforward consequence of 
 the main result of \cite{Klein_haef2}, which is a relative version of
the main result of \cite{Klein_haef}. 
\end{proof}

Let $BG_{d-s}$ denote the classifying space of $(d-s-1)$-spherical fibrations.
The one has a diagram
\[
[X,BG_{d-s}] \to [X,BG] \cong \cal T_{\infty}(X) \leftarrow \cal T_d(X)\, ,
\]
so $[X,BG_{d-s}]$ and $\cal T_d(X)$ are approximations to 
$[X,BG]$.

It is well-known that map $BG_{d-s} \to BG$ is $(d-s)$-connected, so by elementary
obstruction theory, the function $[X,BG_{d-s}] \to [X,BG]$ is surjective
when $d\ge 2s$ and is an isomorphism if $d \ge 2s+1$.

We also have, by Proposition \ref{prop:suspension},
 the following statement.

\begin{cor} \label{cor:thickening-spherical-fib} 
If $d \ge \max(s+3,2s)$,  then the function
\[
\cal T_d(X)\to  \cal T_{\infty}(X)
\]
is surjective. Moreover, if $d\ge \max(s+3,2s+1)$, then it is bijective.
\end{cor}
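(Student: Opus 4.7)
The plan is to deduce Corollary \ref{cor:thickening-spherical-fib} as a direct formal consequence of Proposition \ref{prop:suspension}, together with the observation that $\cal T_\infty(X)$ is by construction the filtered colimit of sets $\varinjlim_{d'} \cal T_{d'}(X)$ along the stabilization maps $S_X\colon \cal T_{d'}(X) \to \cal T_{d'+1}(X)$. The key point I would exploit is that the hypotheses appearing in Proposition \ref{prop:suspension} are upward closed in the dimension: if $d \ge \max(s+3, 2s)$ then $d' \ge \max(s+3, 2s)$ for every $d' \ge d$, and similarly for $d \ge \max(s+3, 2s+1)$.

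From this, the first assertion is immediate: under $d \ge \max(s+3,2s)$, Proposition \ref{prop:suspension} asserts that every transition map in the cofinal tail $\cal T_d(X) \to \cal T_{d+1}(X) \to \cal T_{d+2}(X) \to \cdots$ of the defining colimit is surjective, so the induced map from $\cal T_d(X)$ to the colimit $\cal T_\infty(X)$ is surjective (the standard fact that a filtered colimit of sets along surjections is hit by any of its tail terms). For the second assertion, the stronger hypothesis $d \ge \max(s+3, 2s+1)$ makes every transition map in the same tail a bijection by Proposition \ref{prop:suspension}, and a filtered colimit along bijections identifies each term canonically with the colimit, giving $\cal T_d(X) \cong \cal T_\infty(X)$. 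No real obstacle arises here; the argument is purely formal once Proposition \ref{prop:suspension} is in hand, and the only thing to check is the trivial upward-closedness of the two numerical hypotheses.
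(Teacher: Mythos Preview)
Your proposal is correct and matches the paper's approach: the paper simply states that Corollary \ref{cor:thickening-spherical-fib} follows from Proposition \ref{prop:suspension}, and what you have written is exactly the formal colimit argument that justifies this.
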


\begin{rem} If $X$ is a Poincar\'e  space of dimension $s$, then $\cal T_d(X) \cong [X,BG_{d-s}]$
when $d\ge s+3$. This follows from the observation that a Poincar\'e 
$d$-thickening $P \to X$ has the fiber homotopy type
of a $(d-s-1)$-spherical fibration in this case \cite[lem.~I.4.3]{Browder}, \cite[thm.~B]{Klein_fibration}. 
\end{rem}

Let $\xi$ be a stable spherical fibration over $X$, where $\dim X \le s$. 
If $d \ge \max(s+3,2s)$,
then Corollary \ref{cor:thickening-spherical-fib} implies that there is a Poincar\'e $(d+1)$-thickening 
$f\: Q\to X$ corresponding to $-\xi$ which is unique up to concordance. By unraveling the  correspondence, it that the Poincar\'e pair $(\bar X,Q)$ 
has Spivak fibration $\xi$. 

\begin{cor} \label{cor:thickening_stable_range} 
Assume $d \ge \max(s+3,2s)$. Then there is a Poincar\'e $(d+1)$-thickening 
$Q \to X$ with Spivak fibration $\xi$. If $d \ge \max(s+3,2s+1)$, the thickening 
is unique up to concordance.
\end{cor}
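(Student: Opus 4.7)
The plan is to invoke Corollary~\ref{cor:thickening-spherical-fib} directly, taking its dimension parameter to be $d+1$. The hypothesis $d \ge \max(s+3, 2s)$ implies $d+1 \ge \max(s+4, 2s+1) \ge \max(s+3, 2s)$, so the stabilization map $\cal T_{d+1}(X) \to \cal T_\infty(X)$ is surjective. Under the stronger hypothesis $d \ge \max(s+3, 2s+1)$, one similarly checks $d+1 \ge \max(s+3, 2s+1)$, giving bijectivity.

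Combining this with the isomorphism $\cal T_\infty(X) \cong [X, BG]$ (from the free transitive action of $[X, BG]$ on $\cal T_\infty(X)$ by fiberwise join, with the Euclidean thickening as basepoint), I obtain a surjection $\cal T_{d+1}(X) \twoheadrightarrow [X,BG]$, which becomes a bijection in the stronger range. Applying this to the class $-\xi \in [X, BG]$ yields a Poincar\'e $(d+1)$-thickening $Q \to X$, unique up to concordance under the stronger hypothesis.

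The remaining task is to check that $(\bar X, Q)$ indeed has Spivak fibration $\xi$. For this I would unravel the correspondence: the basepoint of $\cal T_\infty(X)$ is the Euclidean thickening, characterized by its pair having trivial Spivak fibration; the orbit action of $\eta \in [X, BG]$ sends a thickening $P \to X$ with pair-Spivak-fibration $\nu$ to $\Sigma^\eta P \to X$ whose pair has Spivak fibration $\nu - \eta$. Applying this with $\eta = -\xi$ to the Euclidean thickening gives a $(d+1)$-thickening whose pair has Spivak fibration $0 - (-\xi) = \xi$, which is $Q$. Uniqueness in the stronger range then follows immediately from bijectivity of $\cal T_{d+1}(X) \to [X, BG]$.

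The main obstacle I expect is the sign reconciliation in the preceding paragraph, i.e.\ verifying that fiberwise join by an $(k-1)$-spherical fibration $\eta$ modifies the Spivak fibration of the Poincar\'e pair by subtracting $\eta$ (rather than adding it). This amounts to a calculation with the Thom class of the pair $(\bar X, \Sigma^\eta P)$ via the Pontryagin-Thom construction, comparing how the normal invariant of $P$ pushes forward through the join. The comparison is essentially formal once one fixes the orientation conventions for the fiberwise join and for fundamental classes of Poincar\'e pairs, but it is the sole nontrivial piece of bookkeeping in the argument.
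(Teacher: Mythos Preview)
Your proposal is correct and follows essentially the same approach as the paper: invoke Corollary~\ref{cor:thickening-spherical-fib} (with dimension parameter $d+1$) together with the isomorphism $\cal T_\infty(X)\cong [X,BG]$, lift the class $-\xi$, and then unravel the correspondence to see that the resulting pair $(\bar X,Q)$ has Spivak fibration $\xi$. The paper's argument is in fact the short paragraph immediately preceding the corollary; your write-up simply makes explicit the numerology and the sign check that the paper leaves as ``unraveling the correspondence.''
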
 

\begin{cor}\label{cor:5-thickening} If $X$ has the homotopy type of a finite CW complex of
dimension $\le 2$, then there is a Poincar\'e 5-thickening $Q\to X$ with Spivak fibration $\xi$ which is
unique up to concordance.
\end{cor}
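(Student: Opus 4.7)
The plan is to invoke Corollary \ref{cor:thickening-spherical-fib} at the edge case where the dimension hypothesis is satisfied with equality. Setting $s = 2$ and $d = 5$, we have $\max(s+3, 2s+1) = \max(5, 5) = 5 = d$, so Corollary \ref{cor:thickening-spherical-fib} yields a bijection
\[
\cal T_5(X) \overset{\cong}\longrightarrow \cal T_\infty(X).
\]
Composing with the isomorphism $\cal T_\infty(X) \cong [X, BG]$ from the lemma preceding Proposition \ref{prop:suspension}, this produces a bijection $\cal T_5(X) \cong [X, BG]$ sending each concordance class of Poincar\'e 5-thickenings to a homotopy class of stable spherical fibrations over $X$.

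Given the stable spherical fibration $\xi\: X \to BG$, I would take the element $-\xi \in [X, BG]$ and let $Q \to X$ be its unique preimage under the above bijection. To verify that the resulting Poincar\'e pair $(\bar X, Q)$ has Spivak fibration $\xi$ rather than $-\xi$, I would follow the same unraveling of the correspondence used in the proof of Corollary \ref{cor:thickening_stable_range}: the Euclidean basepoint has trivial Spivak fibration, and fiberwise join with a spherical fibration $\eta$ twists the Spivak fibration by $-\eta$, so the element $-\xi$ produces a thickening whose Spivak fibration is $\xi$. Uniqueness up to concordance then follows immediately from the injectivity of the bijection $\cal T_5(X) \cong [X, BG]$.

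There is no serious obstacle here: the result is precisely the boundary case $s = 2$, $d = 5$ of Corollary \ref{cor:thickening-spherical-fib}, where the numerical inequalities required for the surjectivity and injectivity of $\cal T_d(X) \to \cal T_\infty(X)$ are satisfied at equality rather than strictly. The reason for stating this as a separate corollary is that Corollary \ref{cor:thickening_stable_range} is indexed so as to produce a $(d+1)$-thickening from the hypothesis on $d$, and for $s=2$ that indexing just fails to cover the five-dimensional case directly. The only piece of care required is the sign bookkeeping in passing between elements of $[X, BG]$ and Spivak fibrations, which is identical to the bookkeeping already carried out in Corollary \ref{cor:thickening_stable_range}.
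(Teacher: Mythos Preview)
Your proposal is correct and follows essentially the same route the paper implicitly takes: since the indexing of Corollary \ref{cor:thickening_stable_range} just misses the case $s=2$, $d+1=5$, you go back one step to Corollary \ref{cor:thickening-spherical-fib}, observe that $d=5$ meets the bijectivity threshold $\max(s+3,2s+1)=5$ on the nose, and then repeat verbatim the sign bookkeeping (choose $-\xi\in[X,BG]$, unravel to get Spivak fibration $\xi$) that the paper carries out in the paragraph preceding Corollary \ref{cor:thickening_stable_range}. There is nothing to add.
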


\bibliographystyle{amsplain}
\bibliography{QLJ}

\end{document}